\newcommand{\id}{\operatorname{id}}
\newcommand{\Gid}{\operatorname{Gid}}
\newcommand{\pd}{\operatorname{pd}}
\newcommand{\Hom}{\operatorname{Hom}}
\newcommand{\Ext}{\operatorname{Ext}}
\newcommand{\Ker}{\operatorname{Ker}}
\newcommand{\Ass}{\operatorname{Ass}}
\newcommand{\Spec}{\operatorname{Spec}}
\newcommand{\Supp}{\operatorname{Supp}}
\newcommand{\supp}{\operatorname{supp}}
\newcommand{\depth}{\operatorname{depth}}
\newcommand{\width}{\operatorname{width}}
\renewcommand{\dim}{\operatorname{dim}}
\newcommand{\E}{\operatorname{E}}
\newcommand{\FID}{\operatorname{FID}}
\newcommand{\FGID}{\operatorname{FGID}}
\renewcommand{\H}{\operatorname{H}}
\newcommand{\uhom}{{\mathbf R}\Hom}
\newcommand{\utp}{\otimes^{\mathbf L}}
\renewcommand{\H}{\mbox{H}}
\newcommand{\fa}{\frak{a}}
\newcommand{\fn}{\frak{n}}
\newcommand{\fm}{\frak{m}}
\newcommand{\fp}{\frak{p}}
\newcommand{\fq}{\frak{q}}
\newtheorem{thm}{Theorem}[section]
\newtheorem{cor}[thm]{Corollary}
\newtheorem{lem}[thm]{Lemma}
\newtheorem{prop}[thm]{Proposition}
\newtheorem{defn}[thm]{Definition}
\newtheorem{rem}[thm]{Remark}
\begin{document}

\bibliographystyle{amsplain}

\date{}
\author{Afsaneh Esmaeelnezhad and Parviz Sahandi}

\address{Department of Mathematics, University of Kharazmi, Tehran, Iran
}

\email{esmaeilnejad@gmail.com }

\address{
Department of Mathematics, University of Tabriz, Tabriz, Iran, and
School of Mathematics, Institute for Research in Fundamental
Sciences (IPM), P.O. Box: 19395-5746, Tehran Iran. }

\email{sahandi@tabrizu.ac.ir, sahandi@ipm.ir}

\keywords{graded rings, graded modules, Chouinard's formula,
injective dimension, Gorenstein injective dimension}

\subjclass[2010]{13D05,13D02,13A02}

\thanks{P. Sahandi was supported in part by a grant from
IPM (No. 91130030).}

\title{On graded Gorenstein injective dimension}

\begin{abstract}
There are nice relations between graded homological dimensions and
ordinary homological dimensions. We study the Gorenstein injective
dimension of a complex of graded modules denoted by $^*\Gid$, and
derive its properties. In particular we prove the Chouinard's like
formula for $^*\Gid$, and compare it with the usual Gorenstein
injective dimension.
\end{abstract}

\maketitle

\section{Introduction}


Let $R$ be a Noetherian $\mathbb{Z}$-graded ring. In \cite{F} and
\cite{FF}, Fossum and Fossum-Foxby have studied the graded
homological dimension of graded modules and compare them with
classical homological dimensions. They shown that for a graded
$R$-module $M$, one has $\,^*\id_{R}M\leq
\id_{R}M\leq\,^{*}\id_{R}M+1$ where $\id_{R}M$ (resp.
$\,^*\id_{R}M$) denotes for the injective dimension of $M$ in the
category of $R$-modules (resp. category of graded $R$-modules). It
is natural to ask how these inequalities hold for the Gorenstein
injective dimension $\Gid_RM$. In this paper we give an answer this
question. Section 2 of this paper is devoted to review some
hyper-homological algebra for the derived category of the graded
ring $R$. In Section 3 we define the $^*$injective dimension of
complexes of graded modules and homogeneous homomorphisms, and
derived its properties. In the final section we define the graded
Gorenstein injective dimension of complexes of graded modules and
homogeneous homomorphisms denoted by $\,^*\Gid$. Among other results
we show that if the graded ring $R$ admits a $\!^*$dualizing
complex, or is a non-negatively graded ring, then $\,^*\Gid_{R}X\leq
\Gid_{R}X\leq \,^*\Gid_{R}X+1$, where $\Gid_{R}X$ is the Gorenstein
injective dimension of $X$ over $R$ (see Corollary \ref{ingid}).
Also in this case we prove a Chouinard's like formula for
$\,^*\Gid_{R}X$ (see Theorem \ref{C}). Our source of graded rings
and modules are \cite{BH} and \cite{FF}.

Throughout this paper $R$ is a commutative Noetherian
$\mathbb{Z}$-graded ring.

\section{Derived category of complexes of graded modules}


We use the notation from the appendix of \cite{C}. Let $X$ be a
complex of $R$-modules and $R$-homomorphisms. The \emph{supremum}
and the \emph{infimum} of a complex $X$, denoted by $\sup(X)$ and
$\inf(X)$ are defined by the supremum and infimum of
$\{i\in\mathbb{Z}|\H_i(X)\neq0\}$. If $m$ is an integer and $X$ is a
complex, then $\Sigma^mX$ denotes the complex $X$ \emph{shifted} $m$
degrees to the left; it is given by
$$
(\Sigma^mX)_{\ell}=X_{\ell-m}\text{ and
}\partial^{\Sigma^mX}_{\ell}=(-1)^m\partial^X_{\ell-m}
$$
for $\ell\in \mathbb{Z}$.

The symbol ${\mathcal D}(R)$ denotes the \emph{derived category} of
$R$-complexes. The full subcategories ${\mathcal D}_{\sqsubset}
(R)$, ${\mathcal D}_{\sqsupset} (R)$, ${\mathcal D}_{\square} (R)$
and ${\mathcal D}_{0} (R)$ of ${\mathcal D}(R)$ consist of
$R$-complexes $X$ while $\H_{\ell}(X)=0$, for respectively $\ell\gg
0$, $\ell\ll 0$, $|\ell|\gg 0$ and $\ell\neq 0$. Homology
isomorphisms are marked by the sign $\simeq$. The right derived
functor of the homomorphism functor of $R$-complexes and the left
derived functor of the tensor product of $R$-complexes are denoted
by $\uhom_{R}(-,-)$ and $-\utp_{R}-$, respectively.

Let $M=\oplus_{n\in \mathbb{Z}}M_n$ and $N=\oplus_{n\in
\mathbb{Z}}N_n$ be two graded $R$-modules. The $^*\Hom$ functor is
defined by $^*\Hom_R(M,N)=\bigoplus_{i\in \mathbb{Z}}\Hom_i(M,N)$,
such that $\Hom_i(M,N)$ is a $\mathbb{Z}$-submodule of $\Hom_R(M,N)$
consisting of all $\varphi:M\to N$ such that $\varphi(M_n)\subseteq
N_{n+i}$ for all $n\in \mathbb{Z}$. In general
$^*\Hom_R(M,N)\neq\Hom_R(M,N)$ but equality holds if M is finitely
generated, see \cite[Exercise 1.5.19]{BH}. Also the tensor product
$M\otimes_RN$ of $M$ and $N$ is a graded module with
$(M\otimes_RN)_n$ is generated (as a $\mathbb{Z}$-module) by
elements $m\otimes n$ with $m\in M_i$ and $n\in N_j$ where $i+j=n$.

Let $\{M_{\alpha}\}_{\alpha\in I}$ be a family of graded
$R$-modules. Then $\bigoplus_{\alpha}M_{\alpha}$ becomes a graded
$R$-module with
$(\bigoplus_{\alpha}M_{\alpha})_n=\bigoplus_{\alpha}(M_{\alpha})_n$
for all $n\in \mathbb{Z}$, see \cite[Page 289]{FF}. Also recall that
the direct product exists in the category of graded modules. Then
the direct product is denoted by $^*\prod_{\alpha}M_{\alpha}$ and
$(^*\prod_{\alpha}M_{\alpha})_n=\prod_{\alpha}(M_{\alpha})_n$ for
all $n\in \mathbb{Z}$, see \cite[Page 289]{FF}. In this case there
are the following bijections \cite[Page 289]{FF}
$$
^*\Hom_R(\bigoplus_{\alpha}M_{\alpha},-)\stackrel{\cong}{\longrightarrow}\!^*\prod_{\alpha}\!^*\Hom_R(M_{\alpha},-),
$$
$$
^*\Hom_R(-,\!^*\prod_{\alpha}M_{\alpha})\stackrel{\cong}{\longrightarrow}\!^*\prod_{\alpha}\!^*\Hom_R(-,M_{\alpha}).
$$
Likewise direct and inverse limits are exists in the category of
graded modules with
$$
(\!^*\lim_{\longrightarrow}M_{\alpha})_n=\lim_{\longrightarrow}(M_{\alpha})_n,
$$
$$
(\!^*\lim_{\longleftarrow}M_{\alpha})_n=\lim_{\longleftarrow}(M_{\alpha})_n,
$$
see \cite[Page 289]{FF}. Let $(R,\fm)$ be a $^*$local (Noetherian)
ring $R$, that is, a graded ring with a unique homogeneous maximal
ideal $\fm$. The $\fm$-$^*$adic completion of $R$ is
$$
\!^*\widehat{R}=\!^*\lim_{\longleftarrow}R/\fm^n,
$$
which is a Noetherian graded ring by \cite[Corollary VIII.2]{F}. It
is known that the $\fm$-$^*$adic completion $\,^*\widehat{R}$ is a
flat $R$-module \cite[Corollary 3.3]{FF}, and that if
$E:=\!^*\E_R(R/\fm)$ is the $^*$ injective envelope of $R/\fm$ over
$R$, then $\,^*\Hom_R(E,E)\cong \!^*\widehat{R}$ \cite[Theorem
VIII.3]{F}.

The symbol $^*{\mathcal C}(R)$ denotes the category of complexes of
graded $R$-modules and homogeneous differentials. Note that the
category of graded modules is an abelian category, hence
$^*{\mathcal C}(R)$ has a derived category, (see \cite{Ha}), which
will be denoted by $\,^*{\mathcal D}(R)$. Analogously we have
$\,^*{\mathcal C}_{\sqsubset} (R)$, $\,^*{\mathcal C}_{\sqsupset}
(R)$, $\,^*{\mathcal C}_{\square} (R)$ and $\,^*{\mathcal C}_{0}
(R)$ (resp. $\,^*{\mathcal D}_{\sqsubset} (R)$, $\,^*{\mathcal
D}_{\sqsupset} (R)$, $\,^*{\mathcal D}_{\square} (R)$ and
$\,^*{\mathcal D}_{0} (R)$) which are the full subcategories of
$\,^*{\mathcal C}(R)$ (resp. $\,^*{\mathcal D}(R)$). If we use the
notation $X\in ^*\mathcal{C}_{(\sharp)}(R)$, we mean $\H(X)\in
^*\mathcal{C}_{\sharp}(R)$.

For $R$-complexes $X$ and $Y$ of graded modules, with homogeneous
differentials $\partial^X$ and $\partial^Y$, we define the
\emph{homomorphism complex} $^*\Hom_R(X,Y)$ as follows:
$$
^*\Hom_R(X,Y)_{\ell}=\!^*\prod_{p\in
\mathbb{Z}}\!^*\Hom_R(X_p,Y_{p+\ell})
$$
and when $\psi=(\psi_p)_{p\in \mathbb{Z}}$ belongs to
$^*\Hom_R(X,Y)_{\ell}$ the family
$\partial_{\ell}^{^*\Hom_R(X,Y)}(\psi)$ in $^*\Hom_R(X,Y)_{\ell-1}$
has $p$-th component
$$
\partial_{\ell}^{^*\Hom_R(X,Y)}(\psi)_p=\partial^Y_{p+\ell}\psi_p-(-1)^{\ell}\psi_{p-1}\partial^X_p.
$$

When $X\in\,^*{\mathcal C}_{\sqsupset}^f(R)$ and $Y\in\,^*{\mathcal
C}_{\sqsubset}(R)$ all the products $\!^*\prod_{p\in
\mathbb{Z}}\!^*\Hom_R(X_p,Y_{p+\ell})$ are finite. Thus using
\cite[Exercise 1.5.19]{BH}, we have
$$
\!^*\prod_{p\in
\mathbb{Z}}\!^*\Hom_R(X_p,Y_{p+\ell})=\bigoplus_{p\in
\mathbb{Z}}\Hom_R(X_p,Y_{p+\ell}),
$$
for every $\ell\in \mathbb{Z}$. Therefore
$^*\Hom_R(X,Y)=\Hom_R(X,Y)$.

We also define the \emph{tensor product complex} $X\otimes_RY$ as
follows:
$$
(X\otimes_RY)_{\ell}=\bigoplus_{p\in
\mathbb{Z}}(X_p\otimes_RY_{\ell-p})
$$
and the $\ell$-th differential $\partial_{\ell}^{X\otimes_RY}$ is
given on a generator $x_p\otimes y_{\ell-p}$ in
$(X\otimes_RY)_{\ell}$, where $x_p$ and $y_{\ell-p}$ are homogeneous
elements, by
$$
\partial_{\ell}^{X\otimes_RY}(x_p\otimes y_{\ell-p})=\partial^X_p(x_p)\otimes y_{\ell-p}+(-1)^px_p\otimes\partial^Y_{\ell-p}(y_{\ell-p}).
$$
If $X$ and $Y$ are $R$-complexes of graded modules, then
$\,^*\Hom_R(X,-)$, $\,^*\Hom_R(-,Y)$, and $X\otimes_R-$ are functors
on $\,^*{\mathcal C}(R)$.

Note that any object of $\,^*{\mathcal C}_{\sqsubset} (R)$ has an
$^*$injective resolution by \cite[Page 47]{Ha}, and any object of
$\,^*{\mathcal C}_{\sqsupset} (R)$ has an $^*$projective resolution
by \cite[Page 48]{Ha}. The right derived functor of the $\!^*\Hom$
functor in the category of graded complexes is denoted by
$\mathbf{R}\,^*\Hom_{R}(-,-)$ and set
$\,^*\Ext^i_R(-,-)=\H_{-i}(\mathbf{R}\,^*\Hom_{R}(-,-))$. It is
easily seen that if $R$ is a Noetherian $\mathbb{Z}$-graded ring and
$X$ a homologically finite complex of graded modules and
$Y\in\!^*\mathcal{C}(R)$ then
$\mathbf{R}\,^*\Hom_{R}(X,Y)=\uhom_R(X,Y)$. Also the left derived
functor of $-\otimes_R-$ in the category of graded complexes is
denoted by $-\otimes_R^{\mathbf{L}^*}-$. Since $^*$projective graded
$R$-modules coincide with projective $R$-modules by
\cite[Proposition 3.1]{FF} we easily see that
$-\otimes_R^{\mathbf{L}^*}-$ coincides with the ordinary left
derived functor of $-\otimes_R-$ in the category of complexes. So we
use $-\utp_R-$ instead of $-\otimes_R^{\mathbf{L}^*}-$.

For the homomorphism and the tensor product functors we have the
following useful proposition, see \cite[A.2.8, A.2.10, and
A.2.11]{C} for the ungraded case. The proof is the same as the
ungraded case so we omit it.

\begin{prop} Let $S$ be a graded ring which is an $R$-algebra.

{\bf $^*$Adjointness.} Let $Z, Y\in\,^*{\mathcal C}(S)$ and
$X\in\,^*{\mathcal C}(R)$. Then $Z\otimes_SY\in\,^*{\mathcal C}(S)$
and $\!^*\Hom_R(Y,X)\in\,^*{\mathcal C}(S)$, and there is an
isomorphism of $S$-complexes
$$
\rho_{ZYX}:^*\Hom_R(Z\otimes_SY,X)\stackrel{\cong}{\longrightarrow}\!^*\Hom_S(Z,\!^*\Hom_R(Y,X)),
$$
which is natural in $Z, Y$ and $X$.

{\bf $^*$Tensor-Evaluation.} Let $Z, Y\in\,^*{\mathcal C}(S)$ and
$X\in\,^*{\mathcal C}(R)$. Then $^*\Hom_S(Z,Y)\in\,^*{\mathcal
C}(R)$ and $Y\otimes_RX\in\,^*{\mathcal C}(S)$, and there is a
natural morphism of $S$-complexes
$$
\omega_{ZYX}:^*\Hom_S(Z,Y)\otimes_RX\longrightarrow\!^*\Hom_S(Z,Y\otimes_RX).
$$
The morphism is invertible under each of the next two extra
conditions
\begin{itemize}
\item[(i)] $Z\in\!^*\mathcal{C}_{\square}^{fp}(S)$,
$Y\in\!^*\mathcal{C}_{\sqsupset}(S)$, and
$X\in\!^*\mathcal{C}_{\sqsupset}(R)$; or
\item[(ii)] $Z\in\!^*\mathcal{C}_{\square}^{fp}(S)$,
$Y\in\!^*\mathcal{C}_{\sqsubset}(S)$, and
$X\in\!^*\mathcal{C}_{\square}(R)$.
\end{itemize}

{\bf $^*$Hom-Evaluation.} Let $Z, Y\in\,^*{\mathcal C}(S)$ and
$X\in\,^*{\mathcal C}(R)$. Then $^*\Hom_S(Z,Y)\in\,^*{\mathcal
C}(R)$ and $\,^*\Hom_R(Y,X)\in\,^*{\mathcal C}(S)$, and there is a
natural morphism of $S$-complexes
$$
\theta_{ZYX}:Z\otimes_S\!^*\Hom_R(Y,X)\longrightarrow\!^*\Hom_R(\!^*\Hom_S(Z,Y),X).
$$
The morphism is invertible under each of the next two extra
conditions
\begin{itemize}
\item[(i)] $Z\in\!^*\mathcal{C}_{\square}^{fp}(S)$,
$Y\in\!^*\mathcal{C}_{\sqsupset}(S)$, and
$X\in\!^*\mathcal{C}_{\sqsubset}(R)$; or
\item[(ii)] $Z\in\!^*\mathcal{C}_{\sqsupset}^{fp}(S)$,
$Y\in\!^*\mathcal{C}_{\sqsubset}(S)$, and
$X\in\!^*\mathcal{C}_{\square}(R)$.
\end{itemize}
By $Z\in\!^*\mathcal{C}^{fp}(S)$ we mean that $Z$ consists of
finitely generated projective $S$-modules.
\end{prop}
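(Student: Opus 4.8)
The plan is to write down, for each of the three statements, the morphism given by exactly the same formula as in the ungraded case (\cite[A.2.8, A.2.10, A.2.11]{C}), and then to carry out the two verifications that are genuinely special to the graded setting: that each formula respects the internal $\mathbb{Z}$-grading, so that it is a morphism in ${}^*\mathcal{C}$ and not merely in $\mathcal{C}(R)$; and that it is compatible with the identification of ${}^*\Hom$ as a product ${}^*\prod_p$ of graded Hom-modules. Once these two points are settled, the chain-map property, the naturality, and the homological reductions are formally identical to the ungraded arguments, which is exactly why one may model the proof verbatim on the ungraded case.

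For \textbf{${}^*$Adjointness} I would define $\rho_{ZYX}$ on a homogeneous element $\psi$ of ${}^*\Hom_R(Z\otimes_SY,X)_{\ell}$ of internal degree $i$ by $\rho_{ZYX}(\psi)(z)(y)=\psi(z\otimes y)$, together with the usual Koszul sign governed by the homological degrees. Since the internal degree of $z\otimes y$ is the sum of the internal degrees of $z$ and $y$, the image $\rho_{ZYX}(\psi)$ is again homogeneous of internal degree $i$, so $\rho_{ZYX}$ does land in ${}^*\Hom_S(Z,{}^*\Hom_R(Y,X))$; the inverse is the evident assignment $\sigma\mapsto(z\otimes y\mapsto\sigma(z)(y))$. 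That these are mutually inverse homogeneous chain maps, natural in all three variables, is checked componentwise and reduces to the module-level adjunction. No finiteness hypothesis is needed here, so $\rho_{ZYX}$ is an isomorphism in full generality.

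For \textbf{${}^*$Tensor-Evaluation} and \textbf{${}^*$Hom-Evaluation} I would first define the natural morphisms by the standard element formulas $\omega_{ZYX}(\phi\otimes x)(z)=\phi(z)\otimes x$ and $\theta_{ZYX}(z\otimes\phi)(\psi)=\phi(\psi(z))$, again with the appropriate Koszul signs, and verify as before that they are well-defined homogeneous chain maps for arbitrary $Z,Y,X$. The substantive content is invertibility under (i) or (ii), and here I would run the usual dévissage in the variable $Z$. The base case is $Z=\Sigma^mS$ with a shifted internal grading, where both $\omega_{ZYX}$ and $\theta_{ZYX}$ reduce to a shift of the identity. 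Because $Z$ is degreewise finitely generated projective, hence by \cite[Proposition 3.1]{FF} a finite graded free module up to shift in each degree, additivity over finite direct sums extends the base case to any single finitely generated graded free module; at this step the finiteness of $Z$ is precisely what collapses the products ${}^*\prod_p$ in the definition of ${}^*\Hom$ into finite direct sums, via \cite[Exercise 1.5.19]{BH}. Finally, for $Z\in{}^*\mathcal{C}^{fp}_{\square}(S)$ (respectively $Z\in{}^*\mathcal{C}^{fp}_{\sqsupset}(S)$) one inducts on the number of nonzero components, using the mapping cone of a soft truncation, the resulting long exact sequences in homology, and the five lemma.

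The boundedness conditions on $Y$ and $X$ enter precisely at this last step: they guarantee that in each homological degree only finitely many terms of the complexes ${}^*\Hom_S(Z,Y)\otimes_RX$, ${}^*\Hom_S(Z,Y\otimes_RX)$, and their Hom-evaluation analogues are nonzero, so that homology commutes with the products and direct sums involved and the five-lemma argument is valid. I expect the only real point of care to be this bookkeeping, namely matching each pair of hypotheses (i)/(ii) to the exact finiteness it supplies; the graded homogeneity checks are routine, so the proof faithfully parallels the ungraded case and may be omitted.
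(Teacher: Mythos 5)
Your proposal matches the paper exactly: the authors state only that ``the proof is the same as the ungraded case'' (citing \cite[A.2.8, A.2.10, A.2.11]{C}) and omit all details, and your elaboration --- the same element formulas, a check that they respect the internal grading and the ${}^*\prod$ structure of ${}^*\Hom$, and the standard reduction in the variable $Z$ --- is precisely the intended argument. Two minor cautions only: \cite[Proposition 3.1]{FF} does not make a degreewise finitely generated projective $Z$ graded free (only a direct summand of a finite graded free module, which suffices since your argument is additive over summands), and for $Z\in{}^*\mathcal{C}^{fp}_{\sqsupset}(S)$ in Hom-evaluation (ii) the complex $Z$ need not be bounded, so the induction on the number of nonzero components should be replaced by the degreewise verification that under the stated boundedness hypotheses each component of the morphism is a finite (co)product of module-level evaluation isomorphisms.
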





We recall the definition of the \emph{depth} and \emph{width} of
complexes. Let $\fa$ be an ideal in a ring $R$ and $X$ a complex of
graded $R$-modules. The $\fa$-$\depth$ and $\fa$-$\width$ of $X$
over $R$ are defined respectively by
\begin{align*}
\depth(\fa,X):=&-\sup\uhom_R(R/{\fa},X),\\
\width(\fa,X):=&\inf(R/{\fa}\utp_RX).
\end{align*}
If $(R,\fm)$ is a local ring then set $\depth_RX:=\depth(\fm,X)$ and
$\width_RX:=\width(\fm,X)$.

Now let $(R,\fm)$ be a $\,^*$local graded ring and $X$ be a complex
of graded $R$-modules. By \cite[Proposition 1.5.15(c)]{BH},
$-\otimes_RR_{\fm}$ is a faithfully exact functor on the category of
graded $R$-modules. Then we have
\begin{align*}
\width(\fm,X)=& \inf\{i|\H_i(R/\fm\utp_RX)\neq0\} \\[1ex]
= & \inf\{i|\H_i(R/\fm\utp_RX)\otimes_RR_{\fm}\neq0\}\\[1ex]
= & \inf\{i|\H_i(R_{\fm}/\fm R_{\fm}\utp_{R_{\fm}}X_{\fm})\neq0\}\\[1ex]
= & \width(\fm R_{\fm},X_{\fm})=\width _{R_{\fm}}X_{\fm}.
\end{align*}
Likewise we have $\depth(\fm,X)=\depth _{R_{\fm}}X_{\fm}$.

\begin{prop}\label{matlis} Let $(R,\fm,k)$ be a $^*$local ring and
$X\in\mathcal{C}_{\sqsupset}(R)$ and set $E:=\!^*\E_R(k)$. Then
$$\width(\fm,X)=\depth(\fm,\mathbf{R}\!^*\Hom_R(X,E)).$$
\end{prop}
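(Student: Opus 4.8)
The plan is to dualize width into depth by means of the functor $^*\Hom_R(-,E)$, imitating ungraded Matlis duality and using two inputs: the $^*$adjointness isomorphism of the previous proposition, and the fact that $E={}^*\E_R(k)$ is a $^*$injective cogenerator of the category of graded modules. First I would unwind the definition of depth and apply adjointness. By definition,
$$\depth(\fm,\mathbf{R}\,^*\Hom_R(X,E))=-\sup\mathbf{R}\,^*\Hom_R(k,\mathbf{R}\,^*\Hom_R(X,E)).$$
Since $E$ is $^*$injective, the functor $^*\Hom_R(-,E)$ is exact, so $\mathbf{R}\,^*\Hom_R(X,E)={}^*\Hom_R(X,E)$. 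Choosing a projective resolution $P$ of $k$ and invoking $^*$adjointness (with $S=R$, $Z=P$, $Y=X$, and third argument $E$) gives
$$\mathbf{R}\,^*\Hom_R(k,\mathbf{R}\,^*\Hom_R(X,E))\simeq{}^*\Hom_R(P\otimes_RX,E)=\mathbf{R}\,^*\Hom_R(k\utp_RX,E),$$
where the last equality again uses that $E$ is $^*$injective and that $P\otimes_RX$ represents $k\utp_RX$.

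Next I would compute the supremum on the right. Write $W:=k\utp_RX$. Regarding $E$ as a complex concentrated in homological degree zero, the dual complex satisfies $^*\Hom_R(W,E)_n={}^*\Hom_R(W_{-n},E)$; since $^*\Hom_R(-,E)$ is exact it commutes with homology up to this reindexing, so
$$\H_n({}^*\Hom_R(W,E))\cong{}^*\Hom_R(\H_{-n}(W),E)\qquad\text{for all }n\in\mathbb{Z}.$$
The crux is then a Matlis-type faithfulness statement: for every nonzero graded module $M$ one has $^*\Hom_R(M,E)\neq0$. Given such an $M$, pick a nonzero homogeneous element $x\in M_d$; the homogeneous ideal $\Ann(x)$ lies in the unique homogeneous maximal ideal $\fm$, so $Rx$ admits a nonzero homogeneous surjection onto a shift of $k$, which embeds into $E$. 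Extending this composite along $Rx\hookrightarrow M$ via $^*$injectivity of $E$ exhibits a nonzero element of $^*\Hom_R(M,E)$.

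Combining exactness with faithfulness, $\H_n({}^*\Hom_R(W,E))\neq0$ if and only if $\H_{-n}(W)\neq0$, whence $\sup{}^*\Hom_R(W,E)=-\inf(W)$. Therefore
$$\depth(\fm,\mathbf{R}\,^*\Hom_R(X,E))=-\sup\mathbf{R}\,^*\Hom_R(W,E)=\inf(k\utp_RX)=\width(\fm,X),$$
which is the asserted identity. The hypothesis $X\in\mathcal{C}_{\sqsupset}(R)$ ensures that $W$ is homologically bounded below, so that its infimum is finite (when $W\neq0$) and the derived adjointness behaves well.

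I expect the faithful-cogenerator step to be the principal obstacle, since it is precisely where the $^*$local hypothesis and the specific structure of $E={}^*\E_R(k)$ enter; once the exactness and faithfulness of $^*\Hom_R(-,E)$ on graded modules are established, the remainder is formal bookkeeping with suprema and infima, parallel to the ungraded case.
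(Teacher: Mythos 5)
Your argument is correct and is essentially the paper's own proof: both reduce $\depth(\fm,\mathbf{R}\!^*\Hom_R(X,E))$ to $-\sup\mathbf{R}\!^*\Hom_R(k\utp_RX,E)$ via $^*$adjointness and then identify this with $\inf(k\utp_RX)=\width(\fm,X)$ using that $E$ is faithfully $^*$injective. The only difference is that you spell out the faithfulness of $^*\Hom_R(-,E)$ (via a homogeneous surjection of a cyclic submodule onto a shift of $k$ and extension by $^*$injectivity), a step the paper simply asserts.
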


\begin{proof} We have the following computations:
\begin{align*}
\width(\fm,X)=& \inf(k\utp_RX)=-\sup\mathbf{R}\!^*\Hom_R(k\utp_RX,E) \\[1ex]
= & -\sup\mathbf{R}\!^*\Hom_R(k,\mathbf{R}\!^*\Hom_R(X,E)) \\[1ex]
= & \depth(\fm,\mathbf{R}\!^*\Hom_R(X,E)).
\end{align*}
The second equality hods since $E$ is faithfully $^*$injective and
the third one uses the $^*$adjointness isomorphism.
\end{proof}

The following lemma is the graded version of one of Foxby's
accounting principles \cite[Lemma A.7.9]{C}.

\begin{lem}\label{ac} Let $(R,\fm,k)$ be a $^*$local ring. Then
\begin{itemize}
\item[(a)] If $X\in \,^*\mathcal{C}(k)$, then there is a
quasiisomorphism $\H(X)\rightarrow X$.
\item[(b)] If $X\,^*\mathcal{C}_{(\sqsubset)}(R)$ and $W\in \,^*\mathcal{C}(k)$ then
$$\inf\mathbf{R}\!^*\Hom_R(W,X)=\inf\mathbf{R}\!^*\Hom_R(k,X)-\sup W.$$
\end{itemize}
\end{lem}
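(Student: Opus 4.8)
The plan is to prove each part separately, adapting Foxby's original argument (Lemma A.7.9 of \cite{C}) to the graded setting, and relying on the fact that the category of graded $k$-modules decomposes nicely since $k$ is a field concentrated in degree zero.

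For part (a), I would argue that over the $^*$local ring $(R,\fm,k)$ the residue field $k=R/\fm$ is a graded field, and every graded $k$-module is $^*$free, hence $^*$injective and $^*$projective. The key consequence is that every complex $X\in\,^*\mathcal{C}(k)$ is \emph{formal}, i.e.\ quasiisomorphic to its homology $\H(X)$ viewed as a complex with zero differential. Concretely, I would choose in each degree a graded $k$-linear splitting of the boundaries and cycles: writing $Z_\ell$ for the cycles and $B_\ell$ for the boundaries, the short exact sequences $0\to Z_\ell\to X_\ell\to B_{\ell-1}\to 0$ and $0\to B_\ell\to Z_\ell\to \H_\ell(X)\to 0$ split in $\,^*\mathcal{C}(k)$ because every graded $k$-module is $^*$projective. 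Assembling these splittings produces a graded chain map $\H(X)\to X$ inducing the identity on homology, which is the desired quasiisomorphism. The only point requiring care is that all the splitting maps be chosen to be homogeneous, which is automatic since we work entirely within the abelian category of graded $k$-modules.

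For part (b), I would first use part (a) to replace $W$ by its homology, so that without loss of generality $W$ is a complex of graded $k$-modules with zero differential, and similarly I may assume it is a single graded $k$-vector space placed in a range of degrees. The strategy is then to decompose $W$ as a direct sum of shifts of copies of $k$, say $W\simeq \bigoplus_i \Sigma^{s_i} k^{(\beta_i)}$, and to move these shifts and direct sums outside the $\mathbf{R}\,^*\Hom_R(-,X)$ functor. Using that $\,^*\Hom_R$ converts a direct sum in the first variable into a $^*$direct product (the first displayed bijection in the excerpt) together with the shift rule $\mathbf{R}\,^*\Hom_R(\Sigma^s W,X)\cong \Sigma^{-s}\mathbf{R}\,^*\Hom_R(W,X)$, one reduces the infimum computation to that of $\mathbf{R}\,^*\Hom_R(k,X)$, shifted down by the top nonzero degree of $W$, which is exactly $\sup W$. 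Taking infima over the resulting product and matching indices yields the stated formula $\inf\mathbf{R}\,^*\Hom_R(W,X)=\inf\mathbf{R}\,^*\Hom_R(k,X)-\sup W$.

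The main obstacle I anticipate is bookkeeping the degree shifts correctly: one must verify that passing to a $^*$direct product does not lower the infimum (so that the infimum of the product equals the infimum of the shifted factor indexed by the highest-degree summand of $W$), and that the homological shift contributes precisely $-\sup W$ rather than $-\inf W$ or some intermediate value. This requires confirming that $\H_\ell$ commutes with the relevant $^*$products in the range under consideration and that the hypothesis $X\in\,^*\mathcal{C}_{(\sqsubset)}(R)$ guarantees $\mathbf{R}\,^*\Hom_R(k,X)$ is bounded above, so its infimum is finite and the shift argument is valid. Once the boundedness and the commutation of homology with the product are in place, the remaining computation is a routine tracking of indices.
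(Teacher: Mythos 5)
Your proof is correct, and part (a) coincides with the paper's (which simply notes that every graded $k$-module is free by \cite[Exercise 1.5.20]{BH}, so every complex over the graded field $k$ is formal). For part (b) you take a genuinely different, though closely related, route. The paper never decomposes $W$: it writes $W=k\utp_kW$, applies $^*$adjointness to get $\mathbf{R}\!^*\Hom_R(W,X)\simeq{}^*\Hom_k(W,\mathbf{R}\!^*\Hom_R(k,X))$, and then reads the infimum directly off the explicit $^*$product description of the graded Hom complex over $k$. You instead stay over $R$, use part (a) to decompose $W\simeq\bigoplus_s\Sigma^s\H_s(W)$, and invoke additivity of $\mathbf{R}\!^*\Hom_R(-,X)$ (sums to $^*$products) together with the shift rule. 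Both arguments rest on the same two facts --- formality over the graded field and exactness of $^*$products, so that homology commutes with them --- and involve the same index bookkeeping; the adjunction route packages that bookkeeping into one display, while yours isolates the contribution of each homological degree of $W$, which is arguably more transparent. Two small caveats on your write-up: $k=R/\fm$ is a graded field but need not be concentrated in degree zero (it can be of the form $K[t,t^{-1}]$), and a graded $k$-vector space decomposes into internally twisted copies $k(a)$ rather than plain copies of $k$; neither affects your argument, since you only use freeness of graded $k$-modules and internal twists do not change homological degrees. Likewise, finiteness of $\inf\mathbf{R}\!^*\Hom_R(k,X)$ is neither needed nor guaranteed; the identity holds with the usual conventions for $\pm\infty$, exactly as in Foxby's ungraded accounting principle.
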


\begin{proof} Part $(a)$ is easy since very graded $k$-module is free by
\cite[Exercise 1.5.20]{BH}. For part $(b)$ note that $W=k\utp_kW$.
Then using the $^*$adjointness isomorphism we have
$$
\mathbf{R}\!^*\Hom_R(W,X)=\mathbf{R}\!^*\Hom_k(W,\mathbf{R}\!^*\Hom_R(k,X))=\!^*\Hom_k(W,\mathbf{R}\!^*\Hom_R(k,X)).
$$
Thus using part $(a)$ we have
\begin{align*}
\inf\mathbf{R}\!^*\Hom_R(W,X)=& \inf\{\ell|\mathbf{R}\!^*\Hom_R(W,X)_{\ell}\neq0\} \\[1ex]
= & \inf\{\ell|\,^*\prod_{i+j=\ell}\!^*\Hom_k(W_{-j},\mathbf{R}\!^*\Hom_R(k,X)_i)\neq0\} \\[1ex]
= & \inf\mathbf{R}\!^*\Hom_R(k,X)-\sup W.
\end{align*}
This completes the proof.
\end{proof}

The following equality is the graded version of \cite[Lemma 2.6]{Y}.

\begin{prop}\label{Y} Let $(R,\fm,k)$ be a $^*$local ring $X, Y\in \,^*\mathcal{C}_{\square}(R)$, then
$$\width(\fm,\mathbf{R}\!^*\Hom_R(X,Y))=\depth(\fm,X)+\width(\fm,Y)-\depth(\fm,R).$$
\end{prop}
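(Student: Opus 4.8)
The plan is to reduce the graded formula to the ungraded one of \cite[Lemma 2.6]{Y} by localizing at the homogeneous maximal ideal $\fm$, exactly as was done for $\width$ and $\depth$ just before Proposition \ref{matlis}. Recall that there we established $\width(\fm,X)=\width_{R_{\fm}}X_{\fm}$ and $\depth(\fm,X)=\depth_{R_{\fm}}X_{\fm}$, using that $-\otimes_RR_{\fm}$ is faithfully exact on graded modules. So if I can show that localization commutes with $\mathbf{R}\!^*\Hom_R(X,Y)$ in the appropriate sense, namely that
\begin{equation*}
\big(\mathbf{R}\!^*\Hom_R(X,Y)\big)_{\fm}\simeq\uhom_{R_{\fm}}(X_{\fm},Y_{\fm}),
\end{equation*}
then the left-hand side becomes $\width_{R_{\fm}}\uhom_{R_{\fm}}(X_{\fm},Y_{\fm})$, and Yassemi's ungraded formula gives it as $\depth_{R_{\fm}}X_{\fm}+\width_{R_{\fm}}Y_{\fm}-\depth_{R_{\fm}}R_{\fm}$, which equals the desired right-hand side term by term via the same localization identities.

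First I would verify the localization-commutes-with-$\mathbf{R}\!^*\Hom$ step. Since $X,Y\in\,^*\mathcal{C}_{\square}(R)$ are homologically bounded, I can replace $X$ by an $^*$projective (equivalently, by \cite[Proposition 3.1]{FF}, projective) resolution and $Y$ by an $^*$injective resolution, and compute $\mathbf{R}\!^*\Hom_R(X,Y)$ by the homomorphism complex. The key point is that $R_{\fm}$ is flat, so localizing the complex $^*\Hom_R(X,Y)$ is exact on homology; I then need that $^*\Hom_R(X_p,Y_{p+\ell})_{\fm}\cong\Hom_{R_{\fm}}((X_p)_{\fm},(Y_{p+\ell})_{\fm})$ degreewise, which follows from the finiteness allowing $^*\Hom$ to agree with $\Hom$ (as recorded in the excerpt, e.g. via \cite[Exercise 1.5.19]{BH}), and that the $^*$injective resolution of $Y$ localizes to an injective resolution of $Y_{\fm}$.

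Alternatively, and perhaps more cleanly, I would give a self-contained derivation paralleling the proof of Proposition \ref{Y}'s ungraded analogue: combine Proposition \ref{matlis} (relating $\width$ to $\depth$ via $\mathbf{R}\!^*\Hom_R(-,E)$) with the $^*$adjointness and $^*$Hom-evaluation morphisms from the displayed Proposition, together with the accounting Lemma \ref{ac}, to rewrite $\width\big(\fm,\mathbf{R}\!^*\Hom_R(X,Y)\big)$ as $\depth\big(\fm,\mathbf{R}\!^*\Hom_R(\mathbf{R}\!^*\Hom_R(X,Y),E)\big)$ and unravel the inner double-Hom using evaluation. The main obstacle I anticipate is justifying invertibility of the $^*$Hom-evaluation morphism $\theta$ in the relevant case: it requires a finiteness/projectivity hypothesis on the first argument which $X$ need not satisfy. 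To handle this I expect to reduce to the case $k\utp_RX$ in place of $X$ (as in the proof of Proposition \ref{matlis}), so that the relevant arguments become complexes over the field $k$ and the evaluation morphisms become automatically invertible by Lemma \ref{ac}(a). Once that reduction is in place, the remaining bookkeeping of $\sup$, $\inf$, $\depth$, and $\width$ is routine and I would not grind through it.
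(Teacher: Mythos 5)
Your second, ``alternative'' route is in fact the paper's proof: the printed argument consists of the single line ``Use Lemma \ref{ac} and the same argument as \cite[Lemma 2.6]{Y}'', i.e.\ one transcribes Yassemi's ungraded computation with the graded accounting principle Lemma \ref{ac} playing the role of Foxby's \cite[Lemma A.7.9]{C}. The reduction you anticipate for the evaluation morphisms is exactly what Lemma \ref{ac} is for: after rewriting $\width(\fm,-)$ as $\inf(k\utp_R-)$ and using $^*$adjointness (and Proposition \ref{matlis} where Matlis duality is needed), the first argument of the inner $\mathbf{R}{}^*\Hom$ becomes a complex of graded $k$-vector spaces, where Lemma \ref{ac}(a) replaces it by its free homology and Lemma \ref{ac}(b) does the $\sup$/$\inf$ bookkeeping; no unconditional appeal to $^*$Hom-evaluation with $X$ itself in the first slot is ever made. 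Had you carried this sketch out, you would have reproduced the paper's argument.

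Your primary route, however, has a genuine gap. The step
\[
\bigl(\mathbf{R}{}^*\Hom_R(X,Y)\bigr)_\fm \simeq \uhom_{R_\fm}(X_\fm,Y_\fm)
\]
is not available under the stated hypotheses: $X\in{}^*\mathcal{C}_{\square}(R)$ carries no finiteness assumption, so a $^*$projective resolution $P$ of $X$ may consist of free modules of infinite rank, and then each $^*\Hom_R(P_p,I_{p+\ell})$ is a degreewise infinite $^*$product, which does not commute with the localization $-\otimes_RR_\fm$; nor does $^*\Hom$ agree with $\Hom$ there, since \cite[Exercise 1.5.19]{BH} requires the first argument to be finitely generated and therefore does not supply the identification you invoke. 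For the same reason $\mathbf{R}{}^*\Hom_R(X,Y)$ need not coincide with $\uhom_R(X,Y)$, a distinction your reduction silently suppresses. The localization argument does go through verbatim if one adds the hypothesis that $X$ admits a resolution by finitely generated free modules (then both identifications hold and the $^*$injective resolution of $Y$ localizes as you describe), but as a proof of the proposition as stated it is incomplete; the intrinsic route through Lemma \ref{ac} avoids the issue entirely.
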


\begin{proof} Use Lemma \ref{ac} and the same argument as
\cite[Lemma 2.6]{Y}.
\end{proof}

Let $a\in R$ be homogeneous and set $\alpha=\deg(a)$. Then the
complex $0\to R(-\alpha)\stackrel{a}{\to}R\to0$ concentrated in
degrees 1 and 0 is called the \emph{Koszul complex} of $a$, and
denoted by $K(a)$, where $R(-\alpha)$ denotes the graded $R$-module
with grading given by $R(-\alpha)_n=R_{n-\alpha}$. Note that
$K(a)\in^*\mathcal{C}(R)$. Now let $\fa$ be a homogeneous ideal of
$R$ and $a_1,\cdots,a_n$ be a set of generators of $\fa$ by
homogeneous elements. The Koszul complex of $\fa$, denoted by
$K:=K(\fa)$, and define as $K(a_1)\otimes_R\cdots\otimes_RK(a_n)$.
It is shown in \cite{FI} that $\width(\fa,X)=\inf(K\otimes_RX)$.

Let $\alpha:X\to Y$ be a homogeneous morphism between $R$-complexes
of graded $R$-modules. The mapping cone of $\alpha$ is a complex of
graded $R$-modules given by
$$
\mathcal{M}(\alpha)_{\ell}:Y_{\ell}\oplus X_{\ell-1}
$$
and
$$
\delta_\ell^{\mathcal{M}(\alpha)}(y_{\ell},
x_{\ell-1})=(\delta_\ell^{Y}(y_{\ell})+\alpha_{\ell-1}(x_{\ell-1}),-\delta_{\ell-1}^{X}(x_{\ell-1})).
$$
Note that $\delta_\ell^{\mathcal{M}(\alpha)}$ is a homogeneous
differentiation from $Y_{\ell}\oplus X_{\ell-1}$ to
$Y_{\ell-1}\oplus X_{\ell-2}$. It is easy to see that the morphism
$\alpha:X\to Y$ between complexes of graded $R$-modules is
quasi-isomorphism if and only if the mapping cone of $\alpha$ is
homologically trivial (see \cite[Lemma A.1.19]{C}). Also for the
covariant and contravariant  functors $^*\Hom_R(V,-)$ and
$^*\Hom_R(-,W)$ we have the following:
$$
\mathcal{M}(\,^*\Hom_R(V,\alpha))=\,^*\Hom_R(V,\mathcal{M}(\alpha)),
$$
$$
\mathcal{M}(\,^*\Hom_R(\alpha,W))=\Sigma^{1}\,^*\Hom_R(\mathcal{M}(\alpha),W).
$$
See \cite[A2.1.2 and A.2.1.4]{C}.

The ungraded version of the following result contained in
\cite[Proposition 2.7]{CFH}.

\begin{prop}\label{quasi-iso}
Let $\mathfrak{B}$ be a class of graded $R$-modules, and
$\alpha:X\to Y$ be a quasiisomorphism between complexes of graded
$R$-modules such that
$$^*\Hom_R(\alpha,V):\,^*\Hom_R(Y,V)\stackrel{\simeq}\longrightarrow\,^*\Hom_R(X,V)$$
is quasiisomorphism for every module $V\in \mathfrak{B}$. Let
$\widetilde{V}\in\, ^*\mathcal{C}(R)$ be a complex consisting of
modules from $\mathfrak{B}$. Then the induced morphism,
$$
^*\Hom_R(\alpha,\widetilde{V}):\,^*\Hom_R(Y,\widetilde{V})\longrightarrow\,^*\Hom_R(X,\widetilde{V}),
$$
is a quasiisomorphism, provided that either
\begin{itemize}
\item[(a)] $\widetilde{V}\in \,^*\mathcal{C}_{\sqsubset}(R)$, or
\item[(b)] $X,Y\in \,^*\mathcal{C}_{\sqsubset}(R)$.
\end{itemize}
\end{prop}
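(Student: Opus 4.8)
The plan is to reduce the statement about a complex $\widetilde{V}$ to the module-wise hypothesis via a standard filtration/limit argument, using the mapping-cone criterion for quasiisomorphisms recalled just above. Since $\alpha\colon X\to Y$ is a quasiisomorphism, its mapping cone $\mathcal{M}(\alpha)$ is homologically trivial, and by the displayed identity $\mathcal{M}(\,^*\Hom_R(\alpha,W))=\Sigma^{1}\,^*\Hom_R(\mathcal{M}(\alpha),W)$ the map $^*\Hom_R(\alpha,\widetilde{V})$ is a quasiisomorphism precisely when $^*\Hom_R(\mathcal{M}(\alpha),\widetilde{V})$ is homologically trivial. So I would first replace $\alpha$ by the single homologically trivial complex $C:=\mathcal{M}(\alpha)$ and recast the whole problem as: if $^*\Hom_R(C,V)$ is acyclic for every $V\in\mathfrak{B}$, then $^*\Hom_R(C,\widetilde{V})$ is acyclic.

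Next I would handle the two cases by the usual boundedness/cofinality bookkeeping. In case (a), $\widetilde{V}\in\,^*\mathcal{C}_{\sqsubset}(R)$ is bounded above, so I would write it as the homotopy colimit (mapping telescope) of its hard (brutal) truncations $\widetilde{V}^{\leq n}$, each of which is a bounded complex of modules from $\mathfrak{B}$. For a bounded complex the acyclicity of $^*\Hom_R(C,\widetilde{V}^{\leq n})$ follows from the module-wise hypothesis by an elementary induction on the number of nonzero terms, splicing short exact sequences of complexes $0\to V^{(i)}\to\widetilde{V}^{\leq n}\to(\text{shorter})\to0$ and using the long exact homology sequence together with the fact that $^*\Hom_R(C,-)$ is exact on the relevant degreewise-split sequences. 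Passing to the limit, one uses that $\,^*\varprojlim$ and its derived functors behave as in the ungraded case (the bijections for $^*\Hom$ into $^*\prod$ recorded in Section 2 guarantee $^*\Hom_R(C,-)$ commutes with the products defining the telescope), so acyclicity is preserved. In case (b), $X,Y\in\,^*\mathcal{C}_{\sqsubset}(R)$ forces $C=\mathcal{M}(\alpha)\in\,^*\mathcal{C}_{\sqsubset}(R)$ to be bounded above, and now I would instead filter $\widetilde{V}$ (arbitrary) and exploit boundedness of $C$ so that in each homological degree only finitely many products $^*\Hom_R(C_p,\widetilde{V}_{p+\ell})$ contribute; a symmetric truncation argument on $\widetilde{V}$ then reduces again to the bounded, module-wise case.

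The main obstacle I expect is the limit/convergence step in case (a): one must verify that acyclicity of $^*\Hom_R(C,-)$ is genuinely preserved under the homotopy colimit presenting $\widetilde{V}$, i.e.\ that no spurious $\varprojlim^1$-type obstruction appears when the graded $^*\Hom$ is applied. This is where the structural facts from Section 2 — that $^*\Hom_R(\bigoplus_\alpha M_\alpha,-)\cong{}^*\prod_\alpha{}^*\Hom_R(M_\alpha,-)$ and that $^*$direct and $^*$inverse limits are computed degreewise — do the real work, ensuring the graded $^*\Hom$ commutes with the products and limits exactly as the ordinary $\Hom$ does in \cite[Proposition 2.7]{CFH}. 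Once that compatibility is in place, the argument is formally identical to the ungraded proof of \cite[Proposition 2.7]{CFH}, so after setting up the mapping-cone reduction I would carry over that proof verbatim, merely substituting $^*\Hom$, $^*\prod$, and the graded (co)limits for their ungraded counterparts.
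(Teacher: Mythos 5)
You take essentially the same route as the paper: its entire proof consists of the reduction, via $\mathcal{M}(\,^*\Hom_R(\alpha,W))=\Sigma^{1}\,^*\Hom_R(\mathcal{M}(\alpha),W)$, to showing that $^*\Hom_R(\mathcal{M}(\alpha),\widetilde{V})$ is homologically trivial, followed by a citation of the graded version of \cite[Lemma 2.5]{CFH} --- which is precisely the statement you then establish by truncation and a limit argument. Your extra work is therefore a proof of the lemma the paper outsources, and it is sound in substance, but two details should be stated more carefully. In case (a) the bounded brutal truncations of $\widetilde{V}\in\,^*\mathcal{C}_{\sqsubset}(R)$ (keeping the finitely many top homological degrees) are \emph{quotient} complexes and $\widetilde{V}$ is their \emph{inverse} limit, so the device you need is a homotopy limit, not a homotopy colimit or mapping telescope; your subsequent discussion of surjective transition maps and the vanishing of $\varprojlim^{1}$ (checked in each internal degree, using that $^*\varprojlim$ and $^*\prod$ are computed degreewise) is the correct version of this step. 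In case (b) it is not true that only finitely many factors of $^*\prod_{p}\,^*\Hom_R(C_p,\widetilde{V}_{p+\ell})$ survive; the correct observation is that $C=\mathcal{M}(\alpha)\in\,^*\mathcal{C}_{\sqsubset}(R)$ forces the degrees $\le\ell+1$ of the Hom complex to involve only the modules $\widetilde{V}_q$ with $q$ bounded above, so $\H_{\ell}$ is unchanged when $\widetilde{V}$ is replaced by a suitable left-bounded truncation, and (b) reduces to (a).
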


\begin{proof}
In either cases it is enough to show that
$^*\Hom_R(\mathcal{M}(\alpha),\widetilde{V})$ is homologically
trivial. On the other hand by graded version of \cite[Lemma
2.5]{CFH} the result holds if
$^*\Hom_R(\mathcal{M}(\alpha),\widetilde{V}_\ell)$ is homologically
trivial for each $\ell \in \mathbb{Z}$ which this is our assumption.
\end{proof}

\section{$^*$injective dimension}

The injective dimension of a complex $X$ is defined and studied in
\cite{AF}, denoted by $\id_RX$. A graded module $J$ is called
$^*$injective if it is an injective object in the category of graded
modules. The injective dimension of a graded module $M$ in the
category of graded modules, is denoted by $^*\id_RM$ (cf. \cite{FF,
Ha, BH}). The $^*$injective dimension of a complex of graded modules
$X$ is studied in \cite[Page 83]{Ha}. Let $n\in \mathbb{Z}$. A
homologically left bounded complex of graded modules $X$, is said to
have $^*$injective dimension at most $n$, denoted by $^*\id_RX\leq
n$, if there exists an $^*$injective resolution $X\to I$, such that
$I_i=0$ for $i<-n$. If $^*\id_RX\leq n$ holds, but $^*\id_RX\leq
n-1$ does not, we write $^*\id_RX=n$. If $^*\id_RX\leq n$ for all
$n\in \mathbb{Z}$ we write $^*\id_RX=-\infty$. If $^*\id_RX\leq n$
for no $n\in \mathbb{Z}$ we write $^*\id_RX=\infty$. The following
theorem inspired by \cite[Theorem 2.4.I and Corollary 2.5.I]{AF}.

\begin{thm}\label{iid}
For $X\in$ $^{*}\mathcal{D}_{\sqsubset}(R)$ and $n\in \mathbb{Z}$
the following are equivalent:
\begin{itemize}
\item[(1)]$^{*}\id_RX\leq n.$
\item[(2)] $n\geq-\sup U-\inf(\mathbf{R}^*\Hom_{R}(U,X))$ for all $U\in$ $^{*}\mathcal{D}_{\square}(R)$ and $\H(U)\neq0$.
\item[(3)]$n\geq-\inf X$ and $^*\Ext^{n+1}_R(R/J,X)=0$ for every homogeneous ideal $J$ of
$R$.
\item[(4)]$n\geq-\inf X$ and for any (resp. some)
$^*$injective resolution $I$ of $X$, the graded $R$-module
$\Ker(\partial_{-n}:I_{-n}\to I_{-n-1})$ is $^*$injective.
\end{itemize}
Moreover the following hold:
\begin{align*}
^*\id_{R}X =& \sup\{j\in \mathbb{Z}| ^*\Ext^j_R(R/J,X)\neq0\text{
for some homogeneous ideal }J\}\\
=& \sup\{-\sup(U)-\inf(\mathbf{R}^*\Hom_{R}(U,X))|U\ncong0\text{ in
}^{*}\mathcal{D}_{\square}(R) \}.
\end{align*}
\end{thm}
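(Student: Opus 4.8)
The plan is to mimic the ungraded proof from \cite[Theorem 2.4.I and Corollary 2.5.I]{AF}, verifying that each step survives in the graded category once the graded analogues of the relevant hyper-homological tools (established earlier in the excerpt) are used. Concretely, I would establish the cyclic chain of implications $(1)\Rightarrow(2)\Rightarrow(3)\Rightarrow(4)\Rightarrow(1)$, and then extract the two displayed equalities for $^*\id_R X$ as a formal consequence.

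First I would prove $(1)\Rightarrow(2)$. Assuming $^*\id_R X\leq n$, choose an $^*$injective resolution $X\xrightarrow{\simeq} I$ with $I_i=0$ for $i<-n$. For any $U\in{}^*\mathcal{D}_{\square}(R)$ with $\H(U)\neq0$, I would compute $\mathbf{R}\,^*\Hom_R(U,X)$ using $I$ and bound its infimum below by a standard hard-truncation/counting argument: the lowest possibly-nonzero homological degree of $^*\Hom_R(U,I)$ is controlled by $-n$ together with $-\sup U$, giving $\inf(\mathbf{R}\,^*\Hom_R(U,X))\geq -n-\sup U$, which rearranges to $(2)$. Here I would lean on the fact that $^*$injective resolutions exist for left-bounded complexes (\cite[Page 47]{Ha}) and that $\mathbf{R}\,^*\Hom$ is computed by such resolutions.

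Next, $(2)\Rightarrow(3)$ is obtained by specializing $U=R/J$ for a homogeneous ideal $J$ (a nonzero object of $^*\mathcal{D}_{\square}(R)$ concentrated in degree $0$, so $\sup U=0$): condition $(2)$ then forces $\inf(\mathbf{R}\,^*\Hom_R(R/J,X))\geq -n$, i.e. $^*\Ext^{n+1}_R(R/J,X)=0$; taking $J=0$ yields $n\geq-\inf X$. The implication $(3)\Rightarrow(4)$ is the technical heart: writing $C=\Ker(\partial_{-n}\colon I_{-n}\to I_{-n-1})$ for an $^*$injective resolution $I$, I would show $C$ is $^*$injective by checking that $^*\Ext^1_R(R/J,C)=0$ for every homogeneous ideal $J$, using the graded Baer criterion (every graded module is $^*$injective iff $^*\Ext^1$ against all $R/J$ vanishes). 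A dimension shift along the truncated resolution identifies $^*\Ext^1_R(R/J,C)$ with $^*\Ext^{n+1}_R(R/J,X)$, which vanishes by $(3)$; the hypothesis $n\geq-\inf X$ guarantees the truncation $\tau$ at spot $-n$ is still a resolution of $X$. Finally $(4)\Rightarrow(1)$ is immediate: the hard truncation $0\to C\to I_{-n}\to\cdots$ is an $^*$injective resolution with no terms below degree $-n$, so $^*\id_R X\leq n$.

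For the two closing formulas, I would argue that $(1)\Leftrightarrow(3)$ shows $^*\id_R X$ equals the supremum of $j$ with $^*\Ext^j_R(R/J,X)\neq0$ over homogeneous $J$ (the value $n$ being admissible precisely when this supremum is $\leq n$), and $(1)\Leftrightarrow(2)$ identifies $^*\id_R X$ with $\sup\{-\sup(U)-\inf(\mathbf{R}\,^*\Hom_R(U,X))\mid U\ncong0\ \text{in}\ ^*\mathcal{D}_{\square}(R)\}$; one checks both suprema are bounded below by $-\inf X$ so the edge case $n=-\inf X$ is handled uniformly. The main obstacle I anticipate is the graded Baer criterion used in $(3)\Rightarrow(4)$: I must make sure that testing $^*\Ext^1$ against cyclic quotients $R/J$ by \emph{homogeneous} ideals suffices to detect $^*$injectivity of a graded module, and that the dimension-shifting isomorphism respects the grading throughout. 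Once that graded criterion is in hand (it follows from the structure theory of $^*$injectives over a Noetherian $\mathbb{Z}$-graded ring), the remaining steps are routine truncation and adjointness computations of the kind already recorded in Lemma \ref{ac} and Proposition \ref{matlis}.
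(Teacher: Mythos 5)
Your proposal is correct and follows essentially the same route as the paper's proof: the cycle $(1)\Rightarrow(2)\Rightarrow(3)\Rightarrow(4)\Rightarrow(1)$, with the degree-counting bound on $^*\Hom_R(U,I)$ for $(1)\Rightarrow(2)$, specialization to $U=R$ for the bound $n\geq-\inf X$, and the dimension shift plus the graded Baer-type criterion (which the paper supplies as \cite[Corollary 4.3]{FF}) to conclude that $\Ker\partial_{-n}$ is $^*$injective in $(3)\Rightarrow(4)$. The one ingredient you flagged as a possible obstacle --- that testing $^*\Ext^1_R(R/J,-)$ against cyclic quotients by homogeneous ideals detects $^*$injectivity --- is exactly the cited result of Fossum--Foxby, so your argument closes without any gap.
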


\begin{proof} $(1)\Rightarrow(2)$ Let $t:=\sup U$ and $I$ be an $^*$injective resolution of
$X$, such that, for all $i<-n$, $I_i=0$. Then we have
$$
^*\Ext^i_R(U,X)\cong\H_{-i}(^*\Hom_R(U,I)).
$$
Since $^*\Hom_R(U,I)_{-i}=0$ for $-i<-n-t$, the assertion follows.

$(2)\Rightarrow(3)$ It is trivial that $^*\Ext^{n+1}_R(R/J,X)=0$ for
every homogeneous ideal $J$ of $R$. For the second assertion let
$U=R$ in (2). So that $\Ext^i_R(R,X)=\!^*\Ext^i_R(R,X)=0$ for $i>n$.
Now by \cite[Lemma 1.9(b)]{AF}, we have $\H_{-i}(X)=0$ for $-i<-n$.
This means that $n\geq-\inf X$.

$(3)\Rightarrow(4)$ By hypothesis of (4) $\H_i(I)=0$ for $i<-n$.
Thus the complex
$$
\cdots\to0\to0\to I_{-n}\to I_{-n-1}\to\cdots\to I_i\to
I_{i-1}\to\cdots,
$$
gives an $^*$injective resolution of $\Ker\partial_{-n}$. In
particular
$$
\!^*\Ext^1_R(R/J,\Ker\partial_{-n})=\H_{-n-1}\!^*\Hom_R(R/J,I)=\!^*\Ext^{n+1}_R(R/J,X)=0
$$
for every homogeneous ideal $J$ of $R$. Thus $\Ker\partial_{-n}$ is
$^*$injective by \cite[Corollary 4.3]{FF}.

$(4)\Rightarrow(1)$ Let $I$ be any $^*$injective resolution of $X$.
By (5) we have $\Ker\partial_{-n}$ is $^*$injective. Thus
$\!^*\id_RX<-n$ by definition.

The next two equalities are trivial.
\end{proof}

For a local ring $(R,\fm,k)$ and for an $R$-complex $X$ and $i\in
\mathbb{Z}$ the $i$th Bass number and Betti number of $X$ are
defined respectively by $
\mu^{i}_{R}(X):=\dim_{k}\H_{-i}(\uhom_{R}(k,X))$ and
$\beta_i^R(X):=\dim_{k}\H_i(k\otimes_R^\mathbf{L}X).$ It is
well-known that for $X\in{\mathcal D}_{\sqsubset} (R)$ one has (cf.
\cite[Proposition 5.3.I]{AF})
$$\id_{R}X=\sup\{m\in\mathbb{Z}|\exists\fp\in\Spec(R);\mu^m_{R_{\fp}}(X_{\fp})\neq0\}.$$

As a graded analogue we have:

\begin{prop}\label{lem id&mu}
For $X\in$ $^{*}\mathcal{D}_{\sqsubset}(R)$ we have the following
equality
$$^*\id_{R}X=\sup\{m\in\mathbb{Z}|\exists\fp\in ^*\Spec(R);\mu^m_{R_{\fp}}(X_{\fp})\neq0\}.$$
\end{prop}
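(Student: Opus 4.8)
The plan is to read both sides off a single minimal ${}^*$injective resolution of $X$. The inequality ``$\geq$'' is immediate from Theorem~\ref{iid}: since $R/\fp$ is finitely generated, the identification $\mathbf{R}{}^*\Hom_R(R/\fp,X)=\uhom_R(R/\fp,X)$ noted in Section~2 gives ${}^*\Ext^m_R(R/\fp,X)=\Ext^m_R(R/\fp,X)$, and localizing at the homogeneous prime $\fp$ together with $(R/\fp)_\fp=k(\fp)$ yields $(\Ext^m_R(R/\fp,X))_\fp\cong\Ext^m_{R_\fp}(k(\fp),X_\fp)$, whose $k(\fp)$-dimension is $\mu^m_{R_\fp}(X_\fp)$. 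Thus $\mu^m_{R_\fp}(X_\fp)\neq0$ forces ${}^*\Ext^m_R(R/\fp,X)\neq0$, so $m\leq{}^*\id_RX$ by the displayed formula of Theorem~\ref{iid}; taking the supremum over $m$ and $\fp$ proves ``$\geq$''.

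For the reverse inequality I would pass to a minimal ${}^*$injective resolution $X\stackrel{\simeq}{\longrightarrow}I$ in ${}^*\mathcal{C}_{\sqsubset}(R)$, constructed termwise as ${}^*$injective hulls of the cosyzygies. By minimality $I_{-n}$ is the ${}^*$injective hull of $\Ker\partial_{-n}$, so $\Ker\partial_{-n}$ is ${}^*$injective if and only if $\Ker\partial_{-n}=I_{-n}$, i.e.\ $\partial_{-n}=0$; as $\img\partial_{-n}$ is essential in $I_{-n-1}$, this is in turn equivalent to $I_{-n-1}=0$. Feeding this into the equivalence $(1)\Leftrightarrow(4)$ of Theorem~\ref{iid} for the resolution $I$ gives
$${}^*\id_RX=\sup\{m\in\mathbb{Z}|I_{-m}\neq0\}.$$

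It remains to convert the condition $I_{-m}\neq0$ into Bass numbers. Writing each term via the classification of indecomposable ${}^*$injectives, $I_{-m}\cong\bigoplus_{\fp\in{}^*\Spec(R)}\bigoplus_{d\in\mathbb{Z}}({}^*\E_R(R/\fp)(d))^{(\mu^m_{\fp,d})}$, and setting $\bar\mu^m_\fp=\sum_{d}\mu^m_{\fp,d}$, we have $I_{-m}\neq0$ if and only if $\bar\mu^m_\fp\neq0$ for some homogeneous $\fp$. I would then localize $I$ at $\fp$: localization is exact, so $X_\fp\stackrel{\simeq}{\longrightarrow}I_\fp$, and by the localization formula for graded injective hulls $({}^*\E_R(R/\fq)(d))_\fp\cong\E_{R_\fp}(R_\fp/\fq R_\fp)$ for $\fq\subseteq\fp$ and $0$ otherwise, each $(I_{-m})_\fp$ is injective over $R_\fp$. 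Granting that $I_\fp$ is again minimal, $\mu^m_{R_\fp}(X_\fp)$ equals the number of copies of $\E_{R_\fp}(k(\fp))=\E_{R_\fp}(R_\fp/\fp R_\fp)$ in $(I_{-m})_\fp$; since the summands with $\fq\subsetneq\fp$ have associated prime $\fq R_\fp\neq\fp R_\fp$ and contribute none, this number is nonzero exactly when $\bar\mu^m_\fp\neq0$. Chaining the three equivalences gives ${}^*\id_RX=\sup\{m|\exists\,\fp\in{}^*\Spec(R),\ \mu^m_{R_\fp}(X_\fp)\neq0\}$.

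I expect the localization step to be the main obstacle. One must justify $({}^*\E_R(R/\fq))_\fp\cong\E_{R_\fp}(R_\fp/\fq R_\fp)$ and the preservation of minimality under ordinary localization at a homogeneous prime; the delicate point is that ordinary localization forgets the grading, so the match between the graded multiplicities $\bar\mu^m_\fp$ and the ungraded Bass numbers $\mu^m_{R_\fp}(X_\fp)$ has to be recovered from associated primes rather than from degrees. For both facts I would lean on the structure theory of graded injective modules in \cite{FF}. Note that only the nonvanishing equivalence $\bar\mu^m_\fp\neq0\Leftrightarrow\mu^m_{R_\fp}(X_\fp)\neq0$ is needed, not equality of the two numbers, so the argument is insensitive to the exact multiplicity of $\E_{R_\fp}(k(\fp))$ in $({}^*\E_R(R/\fp))_\fp$.
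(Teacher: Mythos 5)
Your inequality ``$\geq$'' is correct and is essentially the paper's own argument: localize $\Ext^m_R(R/\fp,X)={}^*\Ext^m_R(R/\fp,X)$ at the homogeneous prime $\fp$ and invoke Theorem~\ref{iid}. The gap is in the ``$\leq$'' direction, exactly at the localization step you flag as the main obstacle: the isomorphism $({}^*\E_R(R/\fq))_\fp\cong\E_{R_\fp}(R_\fp/\fq R_\fp)$ is false in general, and, worse, $({}^*\E_R(R/\fq))_\fp$ need not even be injective over $R_\fp$. Take $R=k[x,y]$ with the standard grading, $\fq=(0)$ and $\fp=(x,y)$. Then ${}^*\E_R(R)$ is the homogeneous field of fractions $S^{-1}R$ with $S$ the set of nonzero homogeneous elements, and $({}^*\E_R(R))_\fp$ is the subring of $k(x,y)$ obtained by further inverting the polynomials with nonzero constant term. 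The irreducible element $x+y^2$ divides no nonzero homogeneous polynomial (divisors of homogeneous elements of $k[x,y]$ are homogeneous) and no polynomial outside $(x,y)$, so $1/(x+y^2)$ does not lie in this localization; hence $({}^*\E_R(R))_\fp$ is a proper subring of $k(x,y)$, in particular not divisible and therefore not injective over $R_\fp$, whereas $\E_{R_\fp}(R_\fp/\fq R_\fp)=k(x,y)$. This cannot be routed around by counting associated primes: since $I_\fp$ is then not a complex of injective $R_\fp$-modules, it is not an injective resolution of $X_\fp$, and $\mu^m_{R_\fp}(X_\fp)$ cannot be read off as a multiplicity of $\E_{R_\fp}(\kappa(\fp))$ in $(I_{-m})_\fp$. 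Indeed, the failure of ordinary injectivity of ${}^*$injectives is precisely the phenomenon responsible for the possible jump from ${}^*\id_RX$ to $\id_RX$ in Corollary~\ref{ineq}, so any step that treats localized ${}^*$injectives as injectives is exactly where such an argument must be expected to break.

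The paper's proof of ``$\leq$'' avoids minimal resolutions and the structure theory of ${}^*$injectives altogether. By Theorem~\ref{iid} it suffices to show that ${}^*\Ext^j_R(M,X)\neq0$ for a finitely generated graded $M$ forces $\mu^h_{R_\fp}(X_\fp)\neq0$ for some homogeneous $\fp$ and some $h\geq j$; a prime filtration reduces to $M=R/\fq$ with $\fq$ homogeneous, and choosing $\fp$ maximal among homogeneous primes with ${}^*\Ext^h_R(R/\fp,X)\neq0$ for some $h\geq j$, the sequence $0\to R/\fp\to R/\fp\to R/(\fp+Rx)\to0$ shows every homogeneous $x\notin\fp$ acts injectively on ${}^*\Ext^h_R(R/\fp,X)$, so the homogeneous localization ${}^*\Ext^h_R(R/\fp,X)_{(\fp)}$ is a nonzero free module over the graded field $R_{(\fp)}/\fp R_{(\fp)}$ and survives the further ordinary localization to give $\Ext^h_{R_\fp}(\kappa(\fp),X_\fp)\neq0$. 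Only the compatibility of $\Ext$ out of a finitely generated module with localization is used, never the injectivity of any localized ${}^*$injective. If you want to keep your resolution-based route, the multiplicities $\bar\mu^m_\fp$ should be interpreted as graded Bass numbers computed from the homogeneous localization $I_{(\fp)}$ (homogeneous localization does preserve ${}^*$injectivity and minimality), and you then still need something like the paper's freeness-over-the-graded-residue-field argument to pass from $R_{(\fp)}$ to $R_\fp$; at that point the two proofs coincide.
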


\begin{proof} The argument is the same as proof of \cite[Proposition
5.3.I]{AF} with some changes. Denote the supremum by $i$. By Theorem
\ref{iid}, we have $\!^*\id_RX\geq i$. Hence the equality holds if
$i=\infty$. Thus assume that $i$ is finite. By Theorem \ref{iid} we
have to show that if $\!^*\Ext^j_R(M,X)\neq0$ for some finitely
generated graded $R$-module $M$, then $j\leq i$; this implies that
$\!^*\id_RX\leq i$. The elements of $Ass(M)$ are homogeneous prime
ideals. Thus we have a filtration $0=M_{0}\subset
 M_{1}\subset\cdots\subset M_t=M$ of graded submodules of $M$ such that
for each $i$ we have $M_{i}/M_{i-1}\cong  R/\fp_{i}$ with
$\fp_{i}\in\Supp M$ and is homogeneous. From the long exact sequence
of $\!^*\Ext^j_R(-,X)\neq0$ we have the set
$$
\{\fq\in\!\Spec(R)|\text{ there is an }h\geq j\text{ such that
}\!^*\Ext^h_R(R/\fq,X)\neq0\},
$$
is not empty. Let $\fp$ maximal in this set, and for a homogeneous
$x\in R\backslash\fp$ consider the exact sequence
$$
0\to R/\fp\stackrel{x}{\to} R/\fp\to R/(\fp+Rx)\to0.
$$
It induces an exact sequence
$$
\!^*\Ext^h_R(R/(\fp+Rx),X)\to\!^*\Ext^h_R(R/\fp,X)\stackrel{x}{\to}\!^*\Ext^h_R(R/\fp,X)\to\!^*\Ext^{h+1}_R(R/(\fp+Rx),X)
$$
in which the left-hand term is trivial because of the maximality of
$\fp$. Thus
$\!^*\Ext^h_R(R/\fp,X)\stackrel{x}{\to}\!^*\Ext^h_R(R/\fp,X)$ is
injective for all homogeneous elements $x\in R\backslash\fp$, hence
so is the homogeneous localization homomorphism
$\!^*\Ext^h_R(R/\fp,X)\to\!^*\Ext^h_R(R/\fp,X)_{(\fp)}$. Thus the
free $R_{(\fp)}/\fp R_{(\fp)}$-module
$\!^*\Ext^h_R(R/\fp,X)_{(\fp)}$ is nonzero. Consequently
$$
(\!^*\Ext^h_R(R/\fp,X)_{(\fp)})_{\fp
R_{(\fp)}}\cong\!^*\Ext^h_{R_{\fp}}(R_{\fp}/\fp R_{\fp},X_{\fp})
$$
is nonzero. This implies that $j\leq h\leq i$.
\end{proof}

\begin{rem}\label{pd} (1) A graded module is called $^*$projective if it is a projective
object in the category of graded modules. By \cite[Proposition
3.1]{FF} the $^*$projective graded $R$-modules coincide with
projective $R$-modules. The projective dimension of a graded module
$M$ in the category of graded modules, is denoted by $^*\pd_RM$ (cf.
\cite{FF}). Let $n\in \mathbb{Z}$. A homologically right bounded
complex of graded modules $X$, is said to have $^*$projective
dimension at most $n$, denoted by $^*\pd_RX\leq n$, if there exists
a $^*$projective resolution $P\to X$, such that $P_i=0$ for $i>n$.
If $^*\pd_RX\leq n$ holds, but $^*\pd_RX\leq n-1$ does not, we write
$^*\pd_RX=n$. If $^*\pd_RX\leq n$ for all $n\in \mathbb{Z}$ we write
$^*\pd_RX=-\infty$. If $^*\pd_RX\leq n$ for no $n\in \mathbb{Z}$ we
write $^*\pd_RX=\infty$.

(2) For $X\in\!^{*}\mathcal{D}_{\sqsupset}(R)$ by the same method as
in \cite[Theorem 2.4.P and Corollary 2.5.P]{AF} we have
\begin{align*}
^*\pd_{R}X =& \sup\{j\in \mathbb{Z}| ^*\Ext^j_R(X,N)\neq0\text{
for some graded }R\text{-module }N\}\\
=& \sup\{\inf(U)-\inf(\mathbf{R}^*\Hom_{R}(X,U))|U\ncong0\text{ in
}^{*}\mathcal{D}_{\square}(R) \}.
\end{align*}

(3) It is easy to see that for
$X\in\!^{*}\mathcal{D}_{\sqsupset}(R)$, we have
$\!^*\pd_RX\leq\pd_RX$.
\end{rem}

The proof of the following proposition is easy so we omit it (see
\cite[Theorem 1.5.9]{BH}). Let $J$ be an ideal of the graded ring
$R$. Then the graded ideal $J^*$ is denoted to the ideal generated
by all homogeneous elements of $J$. It is well-known that if $\fp$
is a prime ideal of $R$, then $\fp^*$ is a homogeneous prime ideal
of $R$ by \cite[Lemma 1.5.6]{BH}.

\begin{prop}\label{mu}
Assume that $X\in\,^{*}{\mathcal D}_{\square} (R)$ and $\fp$ is a
non homogeneous prime ideal in $R$. Then
$\mu^{i+1}_{R_{\fp}}(X_{\fp})= \mu^{i}_{R_{\fp^*}}(X_{\fp^*})$ and
$\beta_{i}^{R_{\fp}}(X_{\fp})=\beta_{i}^{R_{\fp^{*}}}(X_{\fp^{*}})$for
any integer $i\geq 0$.
\end{prop}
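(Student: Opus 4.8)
The plan is to descend to the homogeneous localization and exploit that the graded residue field is a \emph{graded} field. Write $T:=R_{(\fp^*)}$ for the homogeneous localization of $R$ at $\fp^*$, obtained by inverting the homogeneous elements outside $\fp^*$; it is a $^*$local ring with homogeneous maximal ideal $\fm_T=\fp^*T$ and graded residue field $\ell:=T/\fm_T$. Since $\fp$ is not homogeneous, $\ell\cong K[t,t^{-1}]$ is a nontrivial graded field over a field $K$, the prime $\fp$ corresponds to a maximal ideal $(g)$ of $\ell$, and by \cite[Theorems 1.5.8 and 1.5.9]{BH} one has $R_{\fp^*}=T_{\fm_T}$ and $R_{\fp}=T_{\fp T}$, with $D:=R_{\fp}/\fp^*R_{\fp}$ a discrete valuation ring whose uniformizer $x$ is the image of a homogeneous element; thus $k(\fp^*)=\operatorname{Frac}\ell$ and $k(\fp)=\ell/(g)=D/xD$. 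As localization is exact I set $X_T:=T\utp_R X\in{}^*\mathcal D_{\square}(T)$, and since $\ell$ is finitely generated over $T$, the formation of $\mathbf R\,^*\Hom_T(\ell,X_T)$ and of $\ell\utp_T X_T$ commutes with the flat maps $T\to R_{\fp^*}$ and $T\to R_{\fp}$; under these maps $\ell$ specializes to $k(\fp^*)$ and to $D$ respectively.

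The crux is that every homology module $^*\Ext^j_T(\ell,X_T)$ and $\Tor^T_i(\ell,X_T)$ is a graded module over the graded field $\ell$, hence a direct sum of shifts of $\ell$ and in particular $\ell$-free; denote by $r_j$ and $s_i$ their $\ell$-ranks. Localizing at $\fm_T$ tensors such a module over $\ell$ with $\operatorname{Frac}\ell=k(\fp^*)$, so $\mu^j_{R_{\fp^*}}(X_{\fp^*})=r_j$ and $\beta_i^{R_{\fp^*}}(X_{\fp^*})=s_i$. Localizing instead at $\fp T$ tensors over $\ell$ with the discrete valuation ring $D$, and freeness is preserved, so $\H_{-j}(\mathbf R\Hom_{R_{\fp}}(D,X_{\fp}))\cong D^{r_j}$ and $\H_i(D\utp_{R_{\fp}}X_{\fp})\cong D^{s_i}$ are free $D$-modules with no $x$-torsion.

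It then remains to pass from $D$ to $k(\fp)=D/xD$. For the Betti numbers I would use $k(\fp)\utp_{R_{\fp}}X_{\fp}\simeq k(\fp)\utp_D(D\utp_{R_{\fp}}X_{\fp})$; because the homology of the inner complex is $D$-free, no higher $\Tor^D$ intervenes and $\H_i$ is just $D^{s_i}/xD^{s_i}$, giving $\beta_i^{R_{\fp}}(X_{\fp})=s_i=\beta_i^{R_{\fp^*}}(X_{\fp^*})$ with no shift. For the Bass numbers the analogous reduction gives $\mathbf R\Hom_{R_{\fp}}(k(\fp),X_{\fp})\simeq\mathbf R\Hom_D(k(\fp),\mathbf R\Hom_{R_{\fp}}(D,X_{\fp}))$; over $D$ one has $\Hom_D(k(\fp),D^{r_j})=0$ while $\Ext^1_D(k(\fp),D^{r_j})\cong D^{r_j}/xD^{r_j}$, so the $j$-th free layer contributes only in cohomological degree $j+1$, yielding $\mu^{j+1}_{R_{\fp}}(X_{\fp})=r_j=\mu^j_{R_{\fp^*}}(X_{\fp^*})$.

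The main obstacle is exactly the torsion-freeness used in the second paragraph: one must know that the $\ell$-modules $^*\Ext^j_T(\ell,X_T)$ and $\Tor^T_i(\ell,X_T)$ carry no torsion, which is precisely what the graded-field structure of $\ell=K[t,t^{-1}]$ supplies and which would fail for an arbitrary ungraded complex. Combined with the elementary bookkeeping that $\pd_D k(\fp)=1$ shifts $\Ext$ by one cohomological degree while leaving $\Tor$ in place, this is what produces the asymmetry between the two asserted equalities. The remaining points — that the derived functors commute with the flat localizations $T\to R_{\fp^*}$ and $T\to R_{\fp}$ and that the associated long exact sequences collapse as claimed — are routine, which is why the statement is declared easy and modeled on \cite[Theorem 1.5.9]{BH}.
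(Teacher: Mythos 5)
Your argument is correct and is exactly the mechanism behind the reference to \cite[Theorem 1.5.9]{BH} that the paper offers in lieu of a proof: pass to $T=R_{(\fp^*)}$, use that the graded residue field $\ell\cong K[t,t^{-1}]$ forces all the relevant $\Ext$ and $\Tor$ modules to be $\ell$-free, localize to $R_{\fp^*}$ and to $R_{\fp}$, and then account for the extra step $\pd_D k(\fp)=1$ over the discrete valuation ring $D=R_{\fp}/\fp^*R_{\fp}$, which shifts Bass numbers by one and leaves Betti numbers in place. The only slip is the side remark that the uniformizer of $D$ is the image of a homogeneous element --- it cannot be, since every nonzero homogeneous element of the graded field $\ell$ is a unit (which is exactly why $\fp\neq\fp^*$ forces $\ell$ to be nontrivially graded) --- but homogeneity of the uniformizer is never used in your computation, so nothing breaks.
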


\begin{cor}\label{cor depth}
Let $X\in\,^*{\mathcal D}_{\square} (R)$ and $\fp$ be a
non-homogeneous  prime ideal in $R$. Then
$$\depth X_{\fp}=\depth X_{\fp^{*}}+1.$$
\end{cor}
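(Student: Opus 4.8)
The statement to prove is Corollary~\ref{cor depth}: for $X\in\,^*{\mathcal D}_{\square}(R)$ and $\fp$ a non-homogeneous prime ideal, one has $\depth X_{\fp}=\depth X_{\fp^*}+1$. The natural route is to express depth in terms of Bass numbers and then invoke Proposition~\ref{mu}, which already records the shift $\mu^{i+1}_{R_{\fp}}(X_{\fp})=\mu^{i}_{R_{\fp^*}}(X_{\fp^*})$. First I would recall the standard local characterization of depth via vanishing of Bass numbers: for a local ring $(A,\fn,\kappa)$ and $Y\in{\mathcal D}_{\square}(A)$, one has $\depth_A Y=\inf\{i\in\mathbb{Z}\mid \mu^i_A(Y)\neq0\}$. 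This follows from the definition $\depth_A Y=-\sup\uhom_A(\kappa,Y)$ together with $\mu^i_A(Y)=\dim_{\kappa}\H_{-i}(\uhom_A(\kappa,Y))$, so that $\sup\uhom_A(\kappa,Y)=-\inf\{i\mid\mu^i_A(Y)\neq0\}$.

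Next I would apply this at the two localizations. Writing $A=R_{\fp}$ with residue field $k(\fp)$ and $B=R_{\fp^*}$ with residue field $k(\fp^*)$, the depth characterization gives
\begin{align*}
\depth X_{\fp}=&\inf\{i\mid\mu^i_{R_{\fp}}(X_{\fp})\neq0\},\\
\depth X_{\fp^*}=&\inf\{i\mid\mu^i_{R_{\fp^*}}(X_{\fp^*})\neq0\}.
\end{align*}
Then I would substitute the Bass-number identity from Proposition~\ref{mu}. Because $\mu^{i+1}_{R_{\fp}}(X_{\fp})=\mu^{i}_{R_{\fp^*}}(X_{\fp^*})$ for all $i\geq0$, the set of degrees $i$ where $\mu^i_{R_{\fp}}(X_{\fp})$ is nonzero is exactly the set of degrees where $\mu^{i-1}_{R_{\fp^*}}(X_{\fp^*})$ is nonzero; taking infima then yields $\inf\{i\mid\mu^i_{R_{\fp}}(X_{\fp})\neq0\}=\inf\{i\mid\mu^i_{R_{\fp^*}}(X_{\fp^*})\neq0\}+1$, which is precisely the claimed equality.

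The main thing to be careful about is the index range in Proposition~\ref{mu}, which is stated for $i\geq0$, whereas the depth formula a priori allows the infimum to be taken over all integers. Since $X\in\,^*{\mathcal D}_{\square}(R)$ is homologically bounded, its localizations lie in ${\mathcal D}_{\square}$ and their Bass numbers vanish for $i$ below $\inf X$; in fact one can arrange by a shift, or simply note that $\depth$ of a bounded complex is bounded below by $\inf X$, so the relevant infima are attained at indices where the identity of Proposition~\ref{mu} applies. I would therefore add a brief remark reconciling the index conventions—confirming that no Bass number outside the range $i\geq0$ contributes to either infimum—which is the only genuine obstacle; the rest is a direct translation of the Bass-number shift into a depth shift.
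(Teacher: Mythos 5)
Your proposal is correct and follows essentially the same route as the paper: both reduce the statement to the characterization $\depth_A Y=\inf\{i\mid\mu^i_A(Y)\neq0\}$ and then apply the Bass-number shift of Proposition \ref{mu}. Your extra remark reconciling the index range $i\geq0$ with the infimum over all integers is a reasonable point of care that the paper glosses over, but it does not change the argument.
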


\begin{proof} Using Proposition \ref{mu}, we can assume that both $\depth X_{\fp}$ and $\depth
X_{\fp^{*}}$ are finite. So the equality follows from the fact that
over a local ring $(R,\fm,k)$ we have $\depth_R X=\inf\{i\in
\mathbb{Z}|\mu^{i}_{R}(X)\neq0\}$.
\end{proof}


Foxby in \cite{F79} defined the \emph{small support} of a
homologically right bounded complex $X$ over a Noetherian ring $R$,
denoted by $\mbox{supp}_RX$, as
$$\supp_{R}X=\{\fp\in \Spec R|\exists m \in
\mathbb{Z}: \beta_{m}^{R_{\fp}}(X_{\fp})\neq 0\}.$$ Let
$\,^*\supp_{R} X$ be a subset of $\supp_{R}X$ consisting of
homogeneous prime ideals of $\supp_{R}X$. Then from Proposition
\ref{mu} we see that $\fp\in\supp_{R}X$ if and only if
$\fp^*\in\,^*\supp_{R} X$. Also using \cite[Proposition 2.8]{F79}
and Corollary \ref{cor depth} (or directly from Proposition
\ref{mu}) we have
$$\width_{R_{\fp}}X_{\fp}<\infty\Leftrightarrow\width_{R_{\fp^*}}X_{\fp^*}<\infty.$$

\begin{prop}\label{pro width}
Assume that $X\in\,^{*}{\mathcal D}_{\square} (R)$, and $\fp$ is a
non homogeneous prime ideal in $R$. Then
$$\width_{R_{\fp}}X_{\fp}=\width_{R_{\fp^*}} X_{\fp^*}$$
\end{prop}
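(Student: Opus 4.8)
The plan is to mirror the structure of the depth result in Corollary \ref{cor depth}, reducing the width computation to Betti numbers via the formula $\width_R X = \inf\{i \in \mathbb{Z} \mid \beta_i^R(X) \neq 0\}$ valid over a local ring, and then invoking Proposition \ref{mu} to transfer between $\fp$ and $\fp^*$. First I would reduce to the case where both widths are finite: by the biconditional recorded just before the statement, namely $\width_{R_{\fp}} X_{\fp} < \infty \Leftrightarrow \width_{R_{\fp^*}} X_{\fp^*} < \infty$, the two sides are simultaneously finite or infinite, so the interesting case is when both are finite integers and I may work with the Betti-number characterization directly.

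The key computation proceeds as follows. Over the local ring $R_{\fp}$ the width is $\inf\{i \mid \beta_i^{R_{\fp}}(X_{\fp}) \neq 0\}$, and likewise over $R_{\fp^*}$ it is $\inf\{i \mid \beta_i^{R_{\fp^*}}(X_{\fp^*}) \neq 0\}$. Proposition \ref{mu} gives the equality $\beta_i^{R_{\fp}}(X_{\fp}) = \beta_i^{R_{\fp^*}}(X_{\fp^*})$ for every $i \geq 0$ (with \emph{no shift}, in contrast to the Bass numbers which shift by one). Hence the two sets $\{i \mid \beta_i^{R_{\fp}}(X_{\fp}) \neq 0\}$ and $\{i \mid \beta_i^{R_{\fp^*}}(X_{\fp^*}) \neq 0\}$ coincide, and taking infima yields the desired equality $\width_{R_{\fp}} X_{\fp} = \width_{R_{\fp^*}} X_{\fp^*}$ without any correction term. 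This is precisely why width behaves differently from depth here: the depth statement picks up a $+1$ because Proposition \ref{mu} shifts the Bass-number index, whereas the Betti-number index is preserved, so width is invariant.

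I should be careful to justify the local formula $\width_R X = \inf\{i \mid \beta_i^R(X) \neq 0\}$, which is the width analogue of the depth-via-Bass-numbers fact used in Corollary \ref{cor depth}; it follows from the definition $\width(\fm, X) = \inf(R/\fm \utp_R X) = \inf\{i \mid \H_i(k \utp_R X) \neq 0\}$ together with $\beta_i^R(X) = \dim_k \H_i(k \utp_R X)$, so $\H_i(k \utp_R X) \neq 0$ exactly when $\beta_i^R(X) \neq 0$. The main obstacle, if any, is bookkeeping: ensuring that Proposition \ref{mu} is applied in the stated range $i \geq 0$ and that the finiteness reduction genuinely covers the case where one infimum is $+\infty$ (empty set of nonvanishing Betti numbers). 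Both sides being simultaneously $+\infty$ is handled by the biconditional above, so the argument closes cleanly.

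\begin{proof}
By the biconditional established above, $\width_{R_{\fp}} X_{\fp}$ is finite if and only if $\width_{R_{\fp^*}} X_{\fp^*}$ is finite, so we may assume both are finite. Over a local ring $(R,\fm,k)$ the definition of width gives
$$
\width_R X = \inf(k \utp_R X) = \inf\{i \in \mathbb{Z} \mid \H_i(k \utp_R X) \neq 0\} = \inf\{i \in \mathbb{Z} \mid \beta_i^R(X) \neq 0\},
$$
where the last equality uses $\beta_i^R(X) = \dim_k \H_i(k \utp_R X)$. Applying this to the local rings $R_{\fp}$ and $R_{\fp^*}$ we obtain
$$
\width_{R_{\fp}} X_{\fp} = \inf\{i \mid \beta_i^{R_{\fp}}(X_{\fp}) \neq 0\}, \qquad \width_{R_{\fp^*}} X_{\fp^*} = \inf\{i \mid \beta_i^{R_{\fp^*}}(X_{\fp^*}) \neq 0\}.
$$
By Proposition \ref{mu} we have $\beta_i^{R_{\fp}}(X_{\fp}) = \beta_i^{R_{\fp^*}}(X_{\fp^*})$ for every integer $i \geq 0$, so the two sets of indices with nonvanishing Betti number coincide. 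Taking infima gives $\width_{R_{\fp}} X_{\fp} = \width_{R_{\fp^*}} X_{\fp^*}$, as required.
\end{proof}
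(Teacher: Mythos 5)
Your proof is correct and in substance the same as the paper's: the authors reduce to the case where both widths are finite and appeal to the dual of the argument of \cite[Theorem 1.5.9]{BH}, which amounts to exactly the Betti-number equality of Proposition \ref{mu} that you combine with $\width_RX=\inf\{i\mid\beta_i^R(X)\neq 0\}$ --- the precise analogue of how they prove Corollary \ref{cor depth} from the Bass-number shift. The only loose end is that Proposition \ref{mu} is stated for $i\geq 0$ while a complex in $\,^*{\mathcal D}_{\square}(R)$ may a priori have nonvanishing Betti numbers in negative degrees, but a shift $\Sigma^mX$ disposes of this.
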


\begin{proof} We can assume
that both $\width X_{\fp}$ and $\width X_{\fp^{*}}$ are finite
numbers. And the argument is dual to the proof of \cite[Theorem
1.5.9]{BH}.
\end{proof}

The ungraded version of the following theorem was proved for modules
by Chouinard \cite[Corollary 3.1]{Ch} and extended to complexes by
Yassemi \cite[Theorem 2.10]{Y}.

\begin{thm}\label{thm id}  Let $X\in$ $\,^{*}\mathcal{D}_{\square}(R)$. If $\,^*\id_RX<\infty$ then
$$
^*\id_RX=\sup \{\depth R_{\fp}-\width
X_{\fp}|\fp\in\,^{*}\!\Spec(R)\}.
$$
\end{thm}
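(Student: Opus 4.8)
The plan is to deduce the graded Chouinard formula from the classical (ungraded) one applied at each ordinary localization $R_{\fp}$ with $\fp$ homogeneous, mirroring the proof of \cite[Theorem 2.10]{Y} but replacing the ungraded Bass-number formula by Proposition \ref{lem id&mu}. First I would apply Proposition \ref{lem id&mu} to write
$$
^*\id_R X=\sup\{m\mid \mu^m_{R_{\fp}}(X_{\fp})\neq 0,\ \fp\in\,^*\Spec(R)\}.
$$
For a fixed homogeneous prime $\fp$ the ring $R_{\fp}$ is an ordinary Noetherian local ring with maximal ideal $\fp R_{\fp}$, and over a local ring the injective dimension is detected by the residue field, so $\sup\{m\mid \mu^m_{R_{\fp}}(X_{\fp})\neq 0\}=\id_{R_{\fp}}X_{\fp}$ (cf. \cite{AF}). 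Consequently
$$
^*\id_R X=\sup\{\id_{R_{\fp}}X_{\fp}\mid \fp\in\,^*\Spec(R)\},
$$
and the hypothesis $^*\id_R X<\infty$ guarantees $\id_{R_{\fp}}X_{\fp}<\infty$ for every homogeneous $\fp$.

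It then suffices to prove, for each homogeneous $\fp$, the local identity $\id_{R_{\fp}}X_{\fp}=\depth R_{\fp}-\width_{R_{\fp}}X_{\fp}$. Writing $(S,\fn,k)$ for $R_{\fp}$ and $M:=X_{\fp}\in\mathcal{D}_{\square}(S)$, the complex $\mathbf{R}\Hom_S(k,M)$ lies in $\mathcal{D}(k)$, and for any $C\in\mathcal{D}(k)$ a direct computation (using that every $k$-vector space is free, the ungraded analogue of Lemma \ref{ac}(a)) gives $\width(\fn,C)=\inf C$. Since $\id_S M=-\inf\mathbf{R}\Hom_S(k,M)$, this yields $\id_S M=-\width(\fn,\mathbf{R}\Hom_S(k,M))$. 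Now I would invoke the ungraded version of Proposition \ref{Y}, namely \cite[Lemma 2.6]{Y}, with the pair $(k,M)$; together with $\depth(\fn,k)=0$ it gives
$$
\width(\fn,\mathbf{R}\Hom_S(k,M))=\depth(\fn,k)+\width(\fn,M)-\depth(\fn,S)=\width_S M-\depth S.
$$
Combining the last two displays produces $\id_S M=\depth S-\width_S M$, and taking the supremum over all homogeneous primes gives the asserted formula.

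The step I expect to be the main obstacle is the first reduction $\sup\{m\mid \mu^m_{R_{\fp}}(X_{\fp})\neq 0\}=\id_{R_{\fp}}X_{\fp}$: over the local ring $R_{\fp}$ one must know that the top nonvanishing Bass number occurs at the closed point rather than at some smaller prime, which is precisely where finiteness of the injective dimension enters (via the behaviour of consecutive Bass numbers along saturated chains of primes). Once this is settled the rest is width--depth bookkeeping: one needs only the identity $\width(\fn,C)=\inf C$ for $C\in\mathcal{D}(k)$ and the value $\depth(\fn,k)=0$, both immediate, together with \cite[Lemma 2.6]{Y}. I would also record at the outset that localization at a homogeneous prime carries $^*\mathcal{D}_{\square}(R)$ into $\mathcal{D}_{\square}(R_{\fp})$, so that every invariant appearing above is finite and the cited formulas legitimately apply.
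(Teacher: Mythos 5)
Your overall strategy (localize at homogeneous primes via Proposition \ref{lem id&mu}, then do width--depth bookkeeping with \cite[Lemma 2.6]{Y}) is the right one, but the step you yourself flag as the main obstacle is a genuine gap: the identity $\sup\{m\mid \mu^m_{R_{\fp}}(X_{\fp})\neq 0\}=\id_{R_{\fp}}X_{\fp}$ is false for complexes whose homology is not finitely generated, even when the injective dimension is finite. Take $R=k[x]$ with the standard grading, $X=R_x=k[x,x^{-1}]$ and $\fp=(x)$. Then $X_{\fp}=k(x)$ is the injective hull of $R_{\fp}$, so $\id_{R_{\fp}}X_{\fp}=0$, while $\uhom_{R_{\fp}}(\kappa(\fp),k(x))=0$ because $k(x)$ is torsion-free and $x$-divisible; hence every Bass number $\mu^m_{R_{\fp}}(X_{\fp})$ vanishes and your left-hand supremum is $-\infty$. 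The mechanism you invoke --- nonvanishing of consecutive Bass numbers along saturated chains of primes --- is Bass's theorem for finitely generated modules (and homologically finite complexes), and it is exactly what breaks here: $\mu^0_{R_{(0)}}(X_{(0)})\neq 0$ yet $\mu^1_{R_{(x)}}(X_{(x)})=0$. For the same reason the pointwise identity $\id_{R_{\fp}}X_{\fp}=\depth R_{\fp}-\width_{R_{\fp}}X_{\fp}$ that you prove next is also false (in the example it would read $0=1-\infty$): Chouinard's formula is a supremum over \emph{all} primes of the local ring, not an equality at its closed point.

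The good news is that the detour through $\id_{R_{\fp}}X_{\fp}$ is unnecessary, and removing it turns your argument into the paper's proof. By the definition of Bass numbers one has $\sup\{m\mid\mu^m_{R_{\fp}}(X_{\fp})\neq0\}=-\inf\uhom_{R_{\fp}}(\kappa(\fp),X_{\fp})$ with no hypotheses, and your width computation (namely $\width(\fn,C)=\inf C$ for $C\in\mathcal{D}(\kappa(\fp))$ combined with \cite[Lemma 2.6]{Y} and $\depth(\fn,\kappa(\fp))=0$) establishes precisely $-\inf\uhom_{R_{\fp}}(\kappa(\fp),X_{\fp})=\depth R_{\fp}-\width_{R_{\fp}}X_{\fp}$, again with no mention of $\id_{R_{\fp}}X_{\fp}$. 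Splicing this directly into the supremum from Proposition \ref{lem id&mu} gives the theorem; that two-line chain is exactly what the paper does.
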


\begin{proof} We have the following computations
\begin{align*}
^*\id_RX=& \sup\{m\in \mathbb{Z}|\exists \fp\in \,^*\Spec(R):\mu^{m}_{R_{\fp}}(M_{\fp})\neq 0\} \\[1ex]
= & \sup\{m\in \mathbb{Z}|\exists \fp\in \,^*\!\Spec(R):
\H_m(\uhom_{R_{\fp}}(\kappa(p),M_{\fp}))\neq 0\}\\[1ex]
= & \sup\{-\inf\uhom_{R_{\fp}}(\kappa(\fp),M_{\fp})|\fp\in\,^*\!\Spec(R)\}\\[1ex]
= & \sup\{\depth R_{\fp}-\width_{R_{\fp}}M_{\fp}|\fp\in
\,^*\!\Spec(R)\}.
\end{align*}
The first equality is by Proposition \ref{lem id&mu} and the last
one is by \cite[Lemma 2.6(a)]{Y}.
\end{proof}

The following corollary was already known for graded modules in
\cite[Corollary 4.12]{FF}.

\begin{cor}\label{ineq}
For every $X\in \,^{*}\mathcal{D}_{\square}(R)$, we have
$$\,^{*}\id_RX\leq\id_RX\leq\,^{*}\id_RX+1.$$
\end{cor}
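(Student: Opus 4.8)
The plan is to prove the two inequalities $^*\id_R X \leq \id_R X$ and $\id_R X \leq {}^*\id_R X + 1$ separately, using the Bass-number characterizations of both dimensions. For the ordinary injective dimension we have the formula $\id_R X = \sup\{m \mid \exists \fp \in \Spec(R); \mu^m_{R_\fp}(X_\fp) \neq 0\}$ recalled just before Proposition \ref{lem id&mu}, while Proposition \ref{lem id&mu} gives the graded analogue $^*\id_R X = \sup\{m \mid \exists \fp \in {}^*\Spec(R); \mu^m_{R_\fp}(X_\fp) \neq 0\}$. Since $^*\Spec(R) \subseteq \Spec(R)$, the supremum defining $^*\id_R X$ is taken over a smaller index set than that defining $\id_R X$, which immediately yields the left-hand inequality $^*\id_R X \leq \id_R X$.

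For the right-hand inequality, I would compare contributions to $\id_R X$ coming from non-homogeneous primes with those coming from homogeneous primes. Suppose $\mu^m_{R_\fp}(X_\fp) \neq 0$ witnesses $\id_R X = m$. If $\fp$ is homogeneous, then $\fp \in {}^*\Spec(R)$ and this same Bass number contributes to $^*\id_R X$, so $m \leq {}^*\id_R X \leq {}^*\id_R X + 1$. If $\fp$ is non-homogeneous, then by Proposition \ref{mu} we have $\mu^m_{R_\fp}(X_\fp) = \mu^{m-1}_{R_{\fp^*}}(X_{\fp^*})$, and since $\fp^* \in {}^*\Spec(R)$ by \cite[Lemma 1.5.6]{BH}, the Bass number $\mu^{m-1}_{R_{\fp^*}}(X_{\fp^*}) \neq 0$ shows $m - 1 \leq {}^*\id_R X$, i.e. $m \leq {}^*\id_R X + 1$. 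Taking the supremum over all primes $\fp$ gives $\id_R X \leq {}^*\id_R X + 1$.

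I do not anticipate a serious obstacle here, since both characterizing formulas are already established and Proposition \ref{mu} supplies exactly the degree-shift relation $\mu^{i+1}_{R_\fp}(X_\fp) = \mu^i_{R_{\fp^*}}(X_{\fp^*})$ needed to handle the non-homogeneous primes. The only point requiring mild care is the case $\id_R X = \infty$: one must check that if the ordinary Bass numbers are unbounded, then so are the graded ones (up to the shift by one), which again follows directly from Proposition \ref{mu} by applying it to a sequence of witnessing primes and passing to their homogeneous parts. The proof is therefore essentially a bookkeeping argument translating between the prime spectrum and its homogeneous part, with Proposition \ref{mu} as the engine converting each non-homogeneous witness into a homogeneous one at one lower cohomological degree.
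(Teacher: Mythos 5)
Your proof is correct, but it takes a different route from the paper's. You work directly with the Bass-number characterizations ($\id_RX$ via the formula from \cite[Proposition 5.3.I]{AF} recalled before Proposition \ref{lem id&mu}, and $^*\id_RX$ via Proposition \ref{lem id&mu} itself), and you convert non-homogeneous witnesses into homogeneous ones using the shift $\mu^{i+1}_{R_{\fp}}(X_{\fp})=\mu^{i}_{R_{\fp^*}}(X_{\fp^*})$ of Proposition \ref{mu}. The paper instead routes everything through the Chouinard-type formulas: the first inequality comes from comparing Theorem \ref{thm id} with Yassemi's \cite[Theorem 2.10]{Y} (suprema of $\depth R_{\fp}-\width_{R_{\fp}}X_{\fp}$ over $^*\Spec R$ versus $\Spec R$), and the second from the depth/width shift lemmas, Corollary \ref{cor depth} and Proposition \ref{pro width}. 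Since those two lemmas are themselves proved from Proposition \ref{mu}, and since the Chouinard formulas are derived from the Bass-number formulas, your argument is essentially the paper's proof with the middle layer stripped out; what it buys you is that you never need the finiteness hypotheses built into Theorem \ref{thm id} and \cite[Theorem 2.10]{Y}, so you avoid the paper's preliminary reduction ``$\id_RX<\infty$ iff $^*\id_RX<\infty$'' and your handling of the infinite case is cleaner. Two small points of care: Proposition \ref{mu} as stated is restricted to $i\geq 0$, so your step $\mu^m_{R_{\fp}}(X_{\fp})=\mu^{m-1}_{R_{\fp^*}}(X_{\fp^*})$ formally requires $m\geq 1$ (for $m\leq 0$ the bound $m\leq{}^*\id_RX+1$ has to be obtained separately, e.g.\ from $^*\id_RX\geq-\inf X\geq m$ when some Bass number is nonzero in degree $m\leq 0$); and when the supremum is infinite it is, as you note, not attained, so one argues with a sequence of witnesses rather than a single maximizing prime. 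Neither issue is a real gap.
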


\begin{proof}
First of all note that by proposition \ref{mu}, $\id_RX<\infty$ if
and only if $\,^*\id_RX<\infty$. The first inequality is clear by
Theorem \ref{thm id} and \cite[Theorem 2.10]{Y}. For the second one
let $\fp \in \Spec R$ be such that $ \id_RX=\depth
R_{\fp}-\width_{R_{\fp}}M_{\fp}$ by \cite[Theorem 2.10]{Y}. By
Corollary \ref{cor depth} and Proposition \ref{pro width} we have
$$\depth R_{\fp}-\width_{R_{\fp}}M_{\fp}\leq\depth
R_{\fp^*}-\width_{R_{\fp^*}}M_{\fp^*}+1\leq\,^{*}\id_RX+1,$$ where
the second inequality holds by Theorem \ref{thm id}.
\end{proof}

Here we define the $\!^*$dualizing complex for a graded ring and
prove some related results that we need in the next section.

\begin{defn}
A $^*$dualizing complex for a graded ring $R$ is a homologically
finite and bounded complex $D$, of graded $R$-modules, such that
$\,^{*}\id_RD<\infty$ and the homothety morphism $\psi :R\to
\textbf{R} \,^{*}\Hom_{R}(D,D)$ is invertible in $\,^*\mathcal
{D}(R)$.
\end{defn}

\begin{cor}
Any $^*$dualizing complex for $R$ is a dualizing complex for $R$.
\end{cor}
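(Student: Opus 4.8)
The plan is to show that a $^*$dualizing complex $D$, which by definition lives in $^*\mathcal{C}(R)$, satisfies the two defining conditions of an ungraded dualizing complex once we forget the grading: namely that $\id_R D < \infty$ and that the ungraded homothety $\psi : R \to \mathbf{R}\Hom_R(D,D)$ is an isomorphism in $\mathcal{D}(R)$. The homological finiteness of $D$ is unaffected by forgetting the grading, so only these two points require attention.

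First I would handle the injective dimension. By hypothesis $^*\id_R D < \infty$, and since $D$ is homologically bounded we have $D \in {}^*\mathcal{D}_{\square}(R)$. Corollary \ref{ineq} then applies directly and gives $\id_R D \leq {}^*\id_R D + 1 < \infty$. This is the clean payoff of the Chouinard-type comparison established above, and it disposes of the injective-dimension condition with essentially no further work.

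Next I would address the homothety morphism. The key observation is that since $D$ is homologically finite, the remark made in Section 2 that $\mathbf{R}\,^*\Hom_R(X,Y) = \uhom_R(X,Y)$ for homologically finite $X$ applies with $X = Y = D$. Thus the graded derived Hom agrees with the ordinary derived Hom, and the graded homothety $\psi : R \to \mathbf{R}\,^*\Hom_R(D,D)$ is identified, after forgetting the grading, with the ordinary homothety $R \to \uhom_R(D,D)$. Since $\psi$ is invertible in $^*\mathcal{D}(R)$ by assumption, and invertibility in the graded derived category is detected on homology (a morphism in $^*\mathcal{D}(R)$ is an isomorphism precisely when it is a homology isomorphism), the underlying morphism is a quasiisomorphism, hence invertible in $\mathcal{D}(R)$.

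I expect the main obstacle to be the careful justification that invertibility transfers correctly between the two derived categories. One must argue that the forgetful functor from graded modules to modules is exact and faithful, so that homology isomorphisms of complexes of graded modules remain homology isomorphisms after forgetting the grading; combined with the identification $\mathbf{R}\,^*\Hom_R(D,D) = \uhom_R(D,D)$ for the homologically finite complex $D$, this shows the ungraded homothety is a quasiisomorphism. Both conditions for an ungraded dualizing complex are then verified, and the proof concludes.
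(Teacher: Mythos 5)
Your proof is correct: the paper states this corollary without proof, and your argument assembles exactly the ingredients the authors have already set up for it, namely the bound $\id_RD\leq{}^*\id_RD+1$ from Corollary \ref{ineq} and the Section~2 identification $\mathbf{R}\,^*\Hom_R(D,D)=\uhom_R(D,D)$ for the homologically finite complex $D$, together with exactness and faithfulness of the forgetful functor. This is evidently the intended route, so there is nothing to add.
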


The proof of the following lemma is the same as \cite[Chapter V,
Proposition 3.4]{Ha}.

\begin{lem}
Let $(R,\fm,k)$ be a $^*$local ring and that $D$ is a $^*$dualizing
complex for $R$. Then there exists an integer $t$ such that
$\H^t(\textbf{R} \,^{*}\Hom_{R}(k,D))\cong k$ and $\H^i(\textbf{R}
\,^{*}\Hom_{R}(k,D))=0$ for $i\neq t$.
\end{lem}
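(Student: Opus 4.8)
The plan is to follow the classical argument of \cite[Chapter V, Proposition 3.4]{Ha} in the graded setting. Write $C:=\mathbf{R}\,^*\Hom_R(k,D)$. First I would record that $C$ is well behaved: since $k$ is homologically finite we have $C=\uhom_R(k,D)$, and because $D$ is homologically finite and bounded with $\,^*\id_RD<\infty$, the complex $C$ lies in $\,^*\mathcal{D}_{\square}(R)$ and each homology module $\H_i(C)=\,^*\Ext^{-i}_R(k,D)$ is annihilated by $\fm$ and finitely generated. Hence $\H_i(C)$ is a finite-rank free graded $k$-module, nonzero for only finitely many $i$ (using that every graded $k$-module is free, as in Lemma \ref{ac}).

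Next I would combine formality with duality. By Lemma \ref{ac}(a) applied to the complex of graded $k$-modules $C$ there is a quasiisomorphism $\H(C)\simeq C$, so $C\simeq\bigoplus_i\Sigma^i\H_i(C)$ in $\,^*\mathcal{D}(R)$. The crucial input is the biduality isomorphism $\mathbf{R}\,^*\Hom_R(C,D)\simeq k$. This follows from the defining property of a $^*$dualizing complex: the evaluation morphism $M\to\mathbf{R}\,^*\Hom_R(\mathbf{R}\,^*\Hom_R(M,D),D)$ is invertible for $M=R$, where it is the homothety $\psi$; and the class of homologically finite complexes for which it is invertible is triangulated, closed under summands and shifts, and contains $R$. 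Since every finitely generated graded module over the $^*$local ring $R$ admits a resolution by finite-rank graded-free modules, a devissage shows this class contains $k$, giving $\mathbf{R}\,^*\Hom_R(C,D)\simeq k$.

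Finally I would run the self-referential count. Writing $b_i:=\operatorname{rank}_k\H_i(C)$ and using $\mathbf{R}\,^*\Hom_R(k,D)=C$ on each free summand, the two displays above yield, in $\,^*\mathcal{D}(R)$,
$$
k\simeq\mathbf{R}\,^*\Hom_R\Big(\bigoplus_i\Sigma^i\H_i(C),D\Big)\simeq\bigoplus_{i,j}\Sigma^{j-i}\H_j(C)^{(b_i)}.
$$
Taking homology, the right-hand side must be concentrated in degree $0$. Let $s$ and $\ell$ be the supremum and infimum of $\{i\mid\H_i(C)\neq0\}$; the top contribution sits in homological degree $s-\ell\geq0$ with rank $b_sb_\ell\neq0$ and the bottom in degree $\ell-s\leq0$, so concentration in degree $0$ forces $s=\ell=:t$. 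The single surviving term then has rank $b_t^2=1$, whence $\H_t(C)\cong k$ and $\H_i(C)=0$ for $i\neq t$, the internal degree being pinned down by tracking it through the degree-preserving homothety $\psi$.

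The main obstacle I anticipate is the biduality step: making the devissage from the homothety isomorphism at $M=R$ to the residue field $k$ rigorous in the graded category, and carrying the internal-degree bookkeeping carefully enough to conclude $\H_t(C)\cong k$ rather than merely a shift of $k$.
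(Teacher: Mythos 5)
Your proof is correct and follows essentially the same route as the paper, which offers no argument of its own beyond asserting that the proof is the same as \cite[Chapter V, Proposition 3.4]{Ha} --- precisely the formality-plus-biduality counting you carry out (formality of the complex of graded $k$-modules via Lemma \ref{ac}(a), biduality $\mathbf{R}\,^*\Hom_R(\mathbf{R}\,^*\Hom_R(k,D),D)\simeq k$ by devissage from the homothety, then the degree count forcing concentration and rank one). The one caveat, which you already flag, is that in the graded category this argument only identifies $\H^t$ as a rank-one free graded $k$-module, i.e.\ $k(a)$ for some internal twist $a$ (the $a$-invariant); the homothety does not force $a=0$, so the isomorphism $\H^t\cong k$ in the statement should be read up to twist --- a defect of the statement as written rather than of your proof.
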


Assume that $(R,\fm)$ is a $^*$local ring. A $^*$dualizing complex
$D$ is said to be \emph{normalized $^*$dualizing complex}, if $t=0$
in the lemma. It is easy to see that a suitable shift of any
$^*$dualizing complex is a normalized one. Also using \cite[Chapter
V, Proposition 3.4]{Ha} we see that if $D$ is a normalized
$^*$dualizing complex for $(R,\fm)$, then $D_{\fm}$ is a normalized
dualizing complex for $R_{\fm}$.

\begin{lem}\label{D}
Let $(R,\fm,k)$ be a $^*$local ring and that $D$ is a normalized
$^*$dualizing complex for $R$. Then there exists a natural
functorial isomorphism on the category of graded modules of finite
length to itself
$$\phi:\H^0(\textbf{R} \,^{*}\Hom_{R}(-,D))\to\,^{*}\Hom_R(-,\!^*\E_R(k)),$$
where $\!^*E_R(k)$ is the $^*$injective envelope of $k$ over $R$.
\end{lem}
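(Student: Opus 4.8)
The plan is to compute $\textbf{R}\,^{*}\Hom_R(-,D)$ through one fixed minimal $^*$injective resolution of $D$ and simply read off its degree-zero cohomology on finite length modules. Set $E=\,^{*}\E_R(k)$. Since $\,^{*}\id_RD<\infty$ and $D$ is homologically finite and bounded, I would first choose a minimal $^*$injective resolution $D\xrightarrow{\simeq}I$, whose terms are bounded. By the graded structure theory of $^*$injectives over a $^*$local Noetherian ring (\cite{FF}), each term splits as a graded direct sum of shifted indecomposables $\,^{*}\E_R(R/\fp)(a)$ with $\fp\in\,^{*}\Spec(R)$ and $a\in\mathbb{Z}$. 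Because $\textbf{R}\,^{*}\Hom_R(M,D)$ is represented by $\,^{*}\Hom_R(M,I)$ functorially in $M$, taking $\H^0$ will produce a natural transformation, so naturality will come for free once the term-by-term analysis is done.

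Next I would pin down exactly where copies of $E$ occur in $I$. Minimality forces every differential of $\,^{*}\Hom_R(k,I)$ to vanish, so the multiplicity of the summand $\,^{*}\E_R(k)(a)$ in $I^{j}$ equals $\dim_k\H^{j}\big(\textbf{R}\,^{*}\Hom_R(k,D)\big)_{-a}$. By the preceding lemma together with the normalization hypothesis, the graded module $\H^{j}(\textbf{R}\,^{*}\Hom_R(k,D))$ is $k$ (in cohomological degree $0$ and internal degree $0$) and is zero otherwise. Hence exactly one copy of $E=E(0)$ sits in $I^{0}$, no shifted copy $E(a)$ with $a\neq0$ occurs anywhere, and no copy of $E$ occurs in $I^{j}$ for $j\neq0$.

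Now I would apply $\,^{*}\Hom_R(M,-)$ for a graded module $M$ of finite length. Since $\Supp M=\{\fm\}$ while $\Ass\big(\,^{*}\E_R(R/\fp)(a)\big)=\{\fp\}$, any homomorphism from $M$ into a shifted $\,^{*}\E_R(R/\fp)(a)$ with $\fp\neq\fm$ would have finite length image, forcing $\fm=\fp$; thus $\,^{*}\Hom_R\big(M,\,^{*}\E_R(R/\fp)(a)\big)=0$ for $\fp\neq\fm$. Only the single summand $E$ of $I^{0}$ survives, so $\,^{*}\Hom_R(M,I^{j})=0$ for $j\neq0$ and $\,^{*}\Hom_R(M,I^{0})=\,^{*}\Hom_R(M,E)$. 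Consequently $\,^{*}\Hom_R(M,I)$ is concentrated in cohomological degree $0$ and
$$\H^{0}\big(\textbf{R}\,^{*}\Hom_R(M,D)\big)=\H^{0}\big(\,^{*}\Hom_R(M,I)\big)=\,^{*}\Hom_R(M,E).$$
The split projection $I^{0}\twoheadrightarrow E$ onto the distinguished summand induces this identification, and being extracted from the fixed resolution $I$ it is natural in $M$; this is the desired $\phi$, and both sides plainly take values in graded modules of finite length.

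The step I expect to be the main obstacle is the bookkeeping of the internal grading: I must verify that the normalization genuinely rules out the shifted copies $E(a)$ with $a\neq0$, and that the minimality-to-Bass-number reading is valid in the graded category with shifts recorded correctly, i.e.\ that the multiplicity of $E(a)$ in $I^{j}$ is governed by the internal degree $-a$ part of $\H^{j}(\textbf{R}\,^{*}\Hom_R(k,D))$. Once the structure of $I^{0}$ and the vanishing $\,^{*}\Hom_R\big(M,\,^{*}\E_R(R/\fp)(a)\big)=0$ for $\fp\neq\fm$ are secured, the remaining assertions are purely formal. (Alternatively, one could avoid the structure theorem by inducting on the length of $M$: the lemma handles each simple $k(a)$, and the long exact $\,^{*}\Ext$-sequence propagates concentration of $\textbf{R}\,^{*}\Hom_R(-,D)$ in degree $0$ across short exact sequences; but this route is less direct for exhibiting $\phi$ explicitly.)
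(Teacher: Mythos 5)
Your proof is correct, but it takes a genuinely different route from the paper's. The paper follows Grothendieck's representability argument: it observes that $T:=\H^0(\textbf{R}\,^{*}\Hom_R(-,D))$ is an exact contravariant functor on finite-length graded modules, writes down $\phi(M):T(M)\to\,^{*}\Hom_R(M,T(R))$ explicitly by $f_x(m)=T(\epsilon_m)(x)$, invokes \cite[Lemma 4.4 and Propositions 4.5]{G} to upgrade this to an isomorphism $T\cong\,^{*}\Hom_R(-,\,^*\lim_{\longrightarrow}T(R/\fm^n))$, and then identifies the representing module with $\,^*\E_R(k)$ by showing it is $^*$injective and a $^*$essential extension of $k$ (the latter via an associated-primes argument using \cite{SS} and \cite[Lemma 1.5.6]{BH}). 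You instead compute through a minimal bounded complex $I$ of $^*$injectives representing $D$, decompose its terms into shifted indecomposables via the graded Matlis structure theory, read the multiplicities of the $\,^*\E_R(k)(a)$ off the graded Bass numbers $\dim_k(\,^*\Ext^j_R(k,D))_{-a}$, and kill all summands supported away from $\fm$ by the finite-length hypothesis; your checks of these steps (the socle computation $\,^*\Hom_R(k,E(a))=k(a)$ and the associated-primes argument for $\fp\neq\fm$) are sound. Your route is more concrete: it exhibits $\phi$ as projection onto a distinguished summand of $I^0$ and yields, as a byproduct, that $\textbf{R}\,^{*}\Hom_R(M,D)$ is concentrated in degree $0$ for finite-length $M$, which is exactly the exactness of $T$ that the paper asserts without further comment. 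The price is reliance on the decomposition of graded $^*$injectives and the minimality-to-Bass-number dictionary in the graded category, where the paper leans on the representability machinery of \cite{G}. The internal-degree point you flag --- whether normalization forces $\H^0(\textbf{R}\,^{*}\Hom_R(k,D))\cong k$ with no internal shift, so that only $E(0)$ and not some $E(a)$ occurs in $I^0$ --- is a genuine convention to pin down, but it is not a defect of your argument alone: the paper's identification of $\,^*\lim_{\longrightarrow}T(R/\fm^n)$ with the envelope of $k$ rather than of some $k(a)$ rests on exactly the same reading of the preceding lemma.
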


\begin{proof}
Since $D$ is a normalized $^*$dualizing complex for $R$,
$T:=\H^0(\textbf{R} \,^{*}\Hom_{R}(-,D))$ is an additive
contravariant exact functor from the category of graded modules of
finite length to itself. Let $M$ be a graded $R$-module and $m\in M$
is homogeneous element of degree $\alpha$. Then
$\epsilon_m:R(-\alpha) \to M$ is a homogeneous morphism which sends
$1$ into $m$. Thus we have a homogeneous morphism $\phi(M):T(M)\to
\!^*\Hom_R(M,T(R))$ which sends a homogeneous element $x\in T(M)$ to
a morphism $f_x\in ^*\Hom_R(M,T(R))$ such that
$f_x(m)=T(\epsilon_m)(x)$ for every homogeneous element $m\in M$. It
is easy to see that it is functorial on $M$. Thus we showed that
there is a natural functorial morphism $\phi:T\to
\!^*\Hom_R(-,T(R))$. Therefore by the same method of \cite[Lemma 4.4
and Propositions 4.5]{G}, there is a functorial isomorphism
$$\phi:\H^0(\textbf{R}
\,^{*}\Hom_{R}(-,D))\to\,^{*}\Hom_R(-,\,^*\displaystyle\lim_{\longrightarrow}T(R/\fm^n)),$$
from the category of graded modules of finite length to itself.
Using the technique of proof of \cite[Proposition 4.7]{G} in
conjunction with \cite[Corollary 4.3]{FF}, we see that
$\,^*\displaystyle\lim_{\longrightarrow}T(R/\fm^n)$ is an
$^*$injective $R$-module. Since $D$ is a normalized $^*$dualizing
complex for $R$ we have
$$\,^{*}\Hom_R(k,\,^*\displaystyle\lim_{\longrightarrow}T(R/\fm^n))\cong\H^0(\textbf{R}
\,^{*}\Hom_{R}(k,D))\cong k.$$ Thus in particular we can embed $k$
to $\,^*\displaystyle\lim_{\longrightarrow}T(R/\fm^n)$. To show that
$\,^*\displaystyle\lim_{\longrightarrow}T(R/\fm^n)$ is an
$^*$essential extension of $k$, let $Q$ be a graded submodule of
$\,^*\displaystyle\lim_{\longrightarrow}T(R/\fm^n)$ such that $k\cap
Q=0$. Then $^*\Hom_R(k,Q)$ can be embed in
$$^*\Hom_R(k,\,^*\displaystyle\lim_{\longrightarrow}T(R/\fm^n))\cong
k.$$ Therefore $^*\Hom_R(k,Q)=0$. On the other hand
$\Ass(T(R/\fm^n))$ is in $V(\fm)$ for each $n\in \mathbb{N}$. Now by
\cite[Proposition 2.1]{SS}, the fact that each prime ideal of
$\Ass(\,^*\displaystyle\lim_{\longrightarrow}T(R/\fm^n))$ is the
annihilator of a homogeneous element \cite[Lemma 1.5.6]{BH}, and the
definition of $\,^*\displaystyle\lim_{\longrightarrow}$, we have
$$\Ass(\,^*\displaystyle\lim_{\longrightarrow}T(R/\fm^n))\subseteq
\bigcup_{n\in \mathbb{N}} \Ass(T(R/\fm^n))\subseteq V(\fm).$$
Consequently $Q$ has support in $V(\fm)$, so that $Q=0$. Therefore
$\,^*\displaystyle\lim_{\longrightarrow}T(R/\fm^n)\cong
\!^*\E_R(k)$.
\end{proof}

Let $\fa$ be an ideal of $R$. The right derived \emph{local
cohomology functor} with support in $\fa$ is denoted by
$\mathbf{R}\Gamma_{\fa}(-)$. Its right adjoint,
$\mathbf{L}\Lambda^{\fa}(-)$, is the left derived \emph{local
homology functor} with support in $\fa$ (see \cite{Fr} for detail).

Now we have the following proposition, which its proof use Lemma
\ref{D}, and the argument is the same as \cite[Chapter V,
Proposition 6.1]{Ha}.

\begin{prop}\label{e}
Let $(R,\fm,k)$ be a $^*$local ring and that $D$ be a normalized
$^*$dualizing complex for $R$. Then ${\mathbf
R}\Gamma_{\fm}(D)\simeq\!^*\E_R(k)$.
\end{prop}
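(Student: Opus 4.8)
The plan is to realize $\mathbf{R}\Gamma_{\fm}(D)$ as a filtered colimit over the quotients $R/\fm^n$ and then to recognize each term by Lemma \ref{D}. On the category of graded modules one has $\,^*\Gamma_{\fm}(-)=\,^*\lim_{\longrightarrow}\,^*\Hom_R(R/\fm^n,-)$, and since filtered colimits of graded modules are exact (so that they commute both with homology and with the derived functor), passing to derived functors gives
\[
\mathbf{R}\Gamma_{\fm}(D)\simeq\,^*\lim_{\longrightarrow}\mathbf{R}\,^*\Hom_R(R/\fm^n,D).
\]
It therefore suffices to analyze each $\mathbf{R}\,^*\Hom_R(R/\fm^n,D)$ together with the transition maps induced by the surjections $R/\fm^{n+1}\twoheadrightarrow R/\fm^n$.

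First I would show that each $\mathbf{R}\,^*\Hom_R(R/\fm^n,D)$ is homologically concentrated in degree $0$. Every $R/\fm^n$ is a graded module of finite length, so it admits a composition series whose successive quotients are shifts of $k$. Because $D$ is \emph{normalized}, the preceding lemma gives $\H^i(\mathbf{R}\,^*\Hom_R(k,D))=0$ for $i\neq0$ and $\H^0\cong k$, and the same holds for every shift of $k$. A dévissage along the composition series, using the long exact homology sequences attached to $\mathbf{R}\,^*\Hom_R(-,D)$, then forces $\mathbf{R}\,^*\Hom_R(R/\fm^n,D)$ to be concentrated in degree $0$, where its homology is $T(R/\fm^n):=\H^0(\mathbf{R}\,^*\Hom_R(R/\fm^n,D))$. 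Hence, up to quasi-isomorphism, the $n$-th term of the colimit is the module $T(R/\fm^n)$ placed in degree $0$.

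Next I would apply Lemma \ref{D}. Its functorial isomorphism $\phi$ identifies $T(R/\fm^n)$ with $\,^*\Hom_R(R/\fm^n,E)$, where $E=\,^*\E_R(k)$, and naturality of $\phi$ makes this compatible with the transition maps coming from $R/\fm^{n+1}\twoheadrightarrow R/\fm^n$. Taking the colimit and using the previous paragraph yields
\[
\mathbf{R}\Gamma_{\fm}(D)\simeq\,^*\lim_{\longrightarrow}T(R/\fm^n).
\]
But in the proof of Lemma \ref{D} it is shown precisely that $\,^*\lim_{\longrightarrow}T(R/\fm^n)\cong\,^*\E_R(k)$; equivalently, $E$ is $\fm$-torsion since each $T(R/\fm^n)\cong\,^*\Hom_R(R/\fm^n,E)$ is annihilated by $\fm^n$ and $\Ass(E)\subseteq\V(\fm)$. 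We therefore conclude $\mathbf{R}\Gamma_{\fm}(D)\simeq\,^*\E_R(k)$.

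The main obstacle I anticipate is the concentration step together with the interchange of the colimit and the derived functors. One has to check carefully---mirroring the argument behind \cite[Chapter V, Proposition 3.4]{Ha} in the graded setting---that the normalized hypothesis really propagates along the dévissage so that the homology lands only in degree $0$, and that $\,^*\lim_{\longrightarrow}$ commutes with homology and with $\mathbf{R}\,^*\Hom$; both rest on the exactness of filtered colimits of graded modules. The functoriality clause of Lemma \ref{D} is exactly what guarantees that the colimit of the isomorphisms $\phi$ is again an isomorphism, so that the final identification of $\mathbf{R}\Gamma_{\fm}(D)$ with $\,^*\E_R(k)$ is legitimate.
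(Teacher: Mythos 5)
Your proposal is correct and follows essentially the same route as the paper: the authors simply cite Lemma \ref{D} and state that the argument is the one in \cite[Chapter V, Proposition 6.1]{Ha}, which is precisely the d\'evissage-plus-colimit argument you spell out. Your write-up in fact supplies the details (concentration in degree $0$ via the normalization, commutation of $\,^*\lim_{\longrightarrow}$ with homology, and the identification $\,^*\lim_{\longrightarrow}T(R/\fm^n)\cong\,^*\E_R(k)$ from the proof of Lemma \ref{D}) that the paper leaves implicit.
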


\section{$^*$Gorenstein injective dimension}

In this section we introduce the concept of $\!^*$Gorenstein
injective dimension of complexes and we derive its main properties.
In particular we prove a Chouinard's like formula for this
dimension, and compare it with the usual Gorenstein injective
dimension.
\begin{defn} \label{defn Gid}
A graded $R$-module $N$ is called $\,^{*}$Gorenstein injective, if
there exists an acyclic complex $\textbf{I}$ of $\,^{*}$injective
$R$-modules and homogeneous homomorphisms such that $M\cong
\Ker(I^{0}\to I^{1})$ and for every $\,^{*}$injective module $E$,
the complex $\,^*\Hom_{R}(E,\textbf{I})$ is exact.
\end{defn}

It is clear that every $^*$injective $R$-module is
$\,^{*}$Gorenstein injective. So that every $Y\in
\,^*\mathcal{D}_{\sqsubset}(R)$ has a $\,^{*}$Gorenstein injective
resolution. The $^{*}$Gorenstein injective dimension of $Y\in
\,^*\mathcal{D}_{\sqsubset}(R)$ denoted by $\,^{*}\Gid_{R}Y$, is
define as:
$$\,^{*}\Gid_{R}Y:=\inf\left\{\sup\{\ell\in
Z|B_{-\ell}\neq0\}\bigg| \begin{array}{l} B_{\ell} \text{ is }\, ^*
\text{Gorenstein injective and}
\\ B\in\,^{*}\mathcal{D}_{\sqsubset}(R) \text{ is isomorphic to } Y \end{array} \right\}.$$




By a careful revision of the proof of dual version of \cite[Theorems
2.5 and 2.20]{H} we have the following two results.

\begin{thm}\label{thm resolving}
The class of $^*$Gorenstein injective $R$-modules is $^*$injectively
resolving, that is for any short exact sequence $0 \to X'\to X \to
X''\to 0$ with $X'$ $^*$Gorenstein injective $R$-module, $X''$ is
 $^*$Gorenstein injective if and only if $X$ is $^*$Gorenstein
 injective.
\end{thm}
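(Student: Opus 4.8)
The plan is to follow the dual of Holm's argument, whose engine is closure under extensions together with the elementary vanishing that $\,^*\Ext^i_R(E,N)=0$ for every $^*$injective $E$, every $i\geq 1$, and every $^*$Gorenstein injective $N$. This vanishing is immediate from Definition~\ref{defn Gid}: writing $N=\Ker(I^0\to I^1)$ for a complete resolution $\mathbf{I}$, the sequence $0\to N\to I^0\to I^1\to\cdots$ is a $^*$injective coresolution computing $\,^*\Ext^\bullet_R(E,N)$, and the assumed exactness of $\,^*\Hom_R(E,\mathbf{I})$ kills the positive $\,^*\Ext$. I also record the two one-step consequences I use repeatedly: any $^*$Gorenstein injective $N$ fits into short exact sequences $0\to N\to I\to N'\to 0$ and $0\to N''\to J\to N\to 0$ with $I,J$ $^*$injective and $N',N''$ again $^*$Gorenstein injective (the neighbouring syzygies of $\mathbf{I}$).

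First I would establish closure under extensions: for $0\to X'\to X\to X''\to 0$ with $X'$ and $X''$ both $^*$Gorenstein injective, $X$ is $^*$Gorenstein injective. I build a complete resolution of $X$ by a two-sided horseshoe. On the right, the horseshoe lemma applied to the $^*$injective coresolutions of $X'$ and $X''$ gives $0\to X\to I^0\to I^1\to\cdots$ with $I^n=I'^n\oplus I''^n$; this needs only injectivity. On the left, the same lemma applied to the presentations $\cdots\to I'^{-1}\twoheadrightarrow X'$ and $\cdots\to I''^{-1}\twoheadrightarrow X''$ gives $\cdots\to I^{-1}\twoheadrightarrow X$, where the lifts exist precisely because $\,^*\Ext^1_R(E,-)$ vanishes on the (Gorenstein injective) left syzygies of $X'$ for every $^*$injective $E$. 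Splicing produces an acyclic complex $\mathbf{I}$ of $^*$injectives with $X=\Ker(I^0\to I^1)$, together with a short exact sequence of complexes $0\to\mathbf{I}'\to\mathbf{I}\to\mathbf{I}''\to 0$ that is split in each degree. Applying $\,^*\Hom_R(E,-)$ preserves this degreewise-split sequence, and since the two outer complexes are acyclic, the homology long exact sequence forces $\,^*\Hom_R(E,\mathbf{I})$ to be acyclic for every $^*$injective $E$; hence $X$ is $^*$Gorenstein injective.

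The forward implication of the theorem is exactly this extension closure. For the converse, assume $X'$ and $X$ are $^*$Gorenstein injective and build a complete resolution of $X''$ by hand. Choose a short exact sequence $0\to Y\to J\xrightarrow{\pi}X\to 0$ with $J$ $^*$injective and $Y$ $^*$Gorenstein injective, and put $J_1=\pi^{-1}(X')$. Then $0\to J_1\to J\to X''\to 0$ and $0\to Y\to J_1\to X'\to 0$ are exact; the second has both ends $^*$Gorenstein injective, so $J_1$ is $^*$Gorenstein injective by extension closure. Thus $X''$ is a quotient of the $^*$injective $J$ with $^*$Gorenstein injective kernel $J_1$; iterating this on $J_1,J_2,\dots$ builds the left half $\cdots\to J^{-2}\to J^{-1}\to X''\to 0$, while $\,^*\Ext^{\geq 1}_R(E,X'')\cong\,^*\Ext^{\geq 2}_R(E,J_1)=0$ lets me attach an ordinary $^*$injective coresolution on the right. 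Splicing gives an acyclic complex of $^*$injectives with $X''$ in the middle; its total acyclicity is checked spotwise, every syzygy being either some $J_i$, a right cosyzygy of $X''$, or $X''$ itself, all of which are annihilated by $\,^*\Ext^1_R(E,-)$. Hence $X''$ is $^*$Gorenstein injective.

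I expect the converse direction to be the main obstacle. The decisive idea is to realize $X''$ as the cokernel of the $^*$Gorenstein injective submodule $\pi^{-1}(X')$ inside a genuine $^*$injective $J$, which collapses the problem onto extension closure and avoids the harder closure under direct summands used in Holm's original pullback argument. The remaining delicate point is the bookkeeping for total acyclicity after splicing: one must see that the single criterion $\,^*\Ext^1_R(E,Z^n)=0$ holds at every syzygy $Z^n$ at once, which is where the graded $\,^*\Ext$ long exact sequences and the vanishing lemma carry the load. Since the category of graded modules is abelian with enough $^*$injectives and $^*$injective objects split off, each step is the graded transcription of the ungraded argument, so the grading introduces no new difficulty.
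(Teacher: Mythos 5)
Your proof is correct and is essentially the argument the paper itself relies on: the paper writes out no proof but simply cites the dual of Holm's Theorem 2.5, and your two-sided horseshoe for closure under extensions together with the pullback $J_1=\pi^{-1}(X')$ for the converse direction is precisely that dualized argument, correctly transcribed to the category of graded modules (where the needed vanishing $\,^*\Ext^{i}_R(E,N)=0$ for $E$ $^*$injective and $N$ $^*$Gorenstein injective, and the spotwise acyclicity criterion $\,^*\Ext^1_R(E,Z)=0$ at every cocycle $Z$, both hold exactly as you state). The only cosmetic remark is that in the left-hand horseshoe the obstruction at the first stage already lies in $\,^*\Ext^1_R(I''^{-1},X')$, i.e.\ at $X'$ itself rather than at its left syzygies, but it vanishes for the same reason.
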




\begin{prop}\label{thm Gid}
Let $N$ be a graded $R$-module with finite $^*$Gorenstein injective
dimension, and let $n$ be an integer. Then the following conditions
are equivalent:
\begin{itemize}
\item[(1)] $^*\!\Gid_RN\leq n$.
\item[(2)] $^*\!\Ext^i_R(L,N)=0$ for all $i>n$, and all $R$-modules $L$ with finite $^*\id _RL$.

\item[(3)] $^*\!\Ext^{i}_{R}(I,N)=0$ for all $i>n$, and all $^*$injective
$R$-modules $I$.

\item[(4)] For every exact sequence $0\rightarrow N\rightarrow
H^{0}\rightarrow\cdots\rightarrow H^{n-1}\rightarrow C^n\rightarrow
0$ where $H^0,\cdots,H^{n-1}$ are $^*$Gorenstein injectives, then
$C^n$ is also $^*$Gorenstein injective.
\end{itemize}

Consequently, the $^*$Gorenstein injective dimension of $M$ is
determined by the formulas:
\begin{align*}
^*\Gid_RN=& \sup\{i\in \mathbb{N}_0|\,^*\Ext^i_R(L,N)\neq0 \text{
for
some graded } R\text{-module } L \text{ with finite}\, ^*\id _RL\} \\[1ex]
= & \sup\{i\in \mathbb{N}_0|\,^*\Ext^i_R(I,N)\neq0 \text{ for some
graded } ^*\text{injective module } I\}.
\end{align*}
\end{prop}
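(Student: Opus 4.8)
The plan is to run the scheme $(2)\Leftrightarrow(3)$, $(3)\Rightarrow(1)$, $(1)\Rightarrow(3)$ and $(1)\Leftrightarrow(4)$, following the dual of Holm's argument for \cite[Theorem 2.20]{H}, and then to read off the two displayed formulas from the equivalence of $(1)$, $(2)$ and $(3)$. Everything rests on one elementary observation: if $G$ is a $\,^{*}$Gorenstein injective module, then $\,^{*}\Ext^{i}_{R}(I,G)=0$ for every $\,^{*}$injective $I$ and every $i>0$. Indeed, writing $G=\Ker(I^{0}\to I^{1})$ for the defining acyclic complex $\mathbf{I}$ of $\,^{*}$injectives, its right half $0\to G\to I^{0}\to I^{1}\to\cdots$ is a $\,^{*}$injective resolution of $G$, and the exactness of $\,^{*}\Hom_{R}(I,\mathbf{I})$ computes the positive $\,^{*}\Ext^{i}_{R}(I,G)$ as the homology of $\,^{*}\Hom_{R}(I,\mathbf{I})$ in the relevant degrees, which vanishes. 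Granting this, $(1)\Rightarrow(3)$ follows by choosing a $\,^{*}$Gorenstein injective resolution $0\to N\to G^{0}\to\cdots\to G^{n}\to0$, splitting it into short exact sequences and dimension shifting: the observation kills the intermediate $\,^{*}\Ext$'s of the $G^{j}$ and leaves $\,^{*}\Ext^{i}_{R}(I,N)=0$ for $i>n$. For $(3)\Rightarrow(2)$ I would induct on $\,^{*}\id_{R}L$; the base case $\,^{*}\id_{R}L=0$ is exactly $(3)$, and the inductive step uses a short exact sequence $0\to L\to I\to L'\to0$ with $I$ $\,^{*}$injective and $\,^{*}\id_{R}L'=\,^{*}\id_{R}L-1$ together with the long exact sequence of $\,^{*}\Ext_{R}(-,N)$. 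The reverse $(2)\Rightarrow(3)$ is trivial, since $\,^{*}$injective modules have $\,^{*}\id=0$.

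For the equivalence $(1)\Leftrightarrow(4)$, the implication $(4)\Rightarrow(1)$ is immediate: truncating a $\,^{*}$injective resolution of $N$ gives an exact sequence $0\to N\to I^{0}\to\cdots\to I^{n-1}\to C^{n}\to0$ whose terms are $\,^{*}$injective, hence $\,^{*}$Gorenstein injective, so $(4)$ forces $C^{n}$ to be $\,^{*}$Gorenstein injective and exhibits $\,^{*}\Gid_{R}N\le n$. For $(1)\Rightarrow(4)$ I would compare a given sequence $0\to N\to H^{0}\to\cdots\to H^{n-1}\to C^{n}\to0$ (with the $H^{j}$ $\,^{*}$Gorenstein injective) against a $\,^{*}$Gorenstein injective resolution of $N$ of length $\le n$, padded out by zeros to the same length; the generalized Schanuel lemma then yields an isomorphism $C^{n}\oplus B\cong B'$ with $B,B'$ $\,^{*}$Gorenstein injective, so $C^{n}$ is a direct summand of a $\,^{*}$Gorenstein injective module and therefore $\,^{*}$Gorenstein injective by closure under direct summands (the graded analogue of the companion properties to \cite[Theorem 2.5]{H}).

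The real content is $(3)\Rightarrow(1)$, which I would reduce to the key lemma: a module $C$ with $\,^{*}\Gid_{R}C<\infty$ and $\,^{*}\Ext^{>0}_{R}(I,C)=0$ for every $\,^{*}$injective $I$ is itself $\,^{*}$Gorenstein injective. Granting the lemma, given $(3)$ and $\,^{*}\Gid_{R}N<\infty$ one forms the cosyzygy $C^{n}$ in a $\,^{*}$Gorenstein injective resolution of $N$; dimension shifting through the $\,^{*}$Gorenstein injective terms and invoking $(3)$ gives $\,^{*}\Ext^{>0}_{R}(I,C^{n})=0$, while $C^{n}$ inherits finite $\,^{*}\Gid$, so the lemma makes $C^{n}$ $\,^{*}$Gorenstein injective and hence $\,^{*}\Gid_{R}N\le n$. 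For the lemma itself I would argue by induction on $g=\,^{*}\Gid_{R}C$, the case $g=0$ being vacuous. Writing $0\to C\to G^{0}\to C^{1}\to0$ with $G^{0}$ $\,^{*}$Gorenstein injective and $\,^{*}\Gid_{R}C^{1}\le g-1$, the long exact sequence of $\,^{*}\Ext_{R}(I,-)$ together with the basic vanishing shows that $\,^{*}\Ext^{>0}_{R}(I,C^{1})=0$ as well, so the induction hypothesis makes $C^{1}$ $\,^{*}$Gorenstein injective.

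It remains to manufacture the acyclic, $\,^{*}\Hom_{R}(I,-)$-exact complex of $\,^{*}$injectives witnessing that $C$ is $\,^{*}$Gorenstein injective, starting from the fact that $G^{0}$ and $C^{1}$ are both $\,^{*}$Gorenstein injective. Since $G^{0}$ is $\,^{*}$Gorenstein injective there is a short exact sequence $0\to G_{1}\to T\to G^{0}\to0$ with $T$ $\,^{*}$injective, $G_{1}$ $\,^{*}$Gorenstein injective, and the sequence $\,^{*}\Hom_{R}(I,-)$-exact (the leftward step of a complete resolution of $G^{0}$). Pulling back $T\twoheadrightarrow G^{0}$ along the inclusion $C\hookrightarrow G^{0}$ produces a module $V$ sitting in two short exact sequences $0\to G_{1}\to V\to C\to0$ and $0\to V\to T\to C^{1}\to0$. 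From the second sequence, the $\,^{*}\Hom$-exactness and the $\,^{*}$Gorenstein injectivity of $C^{1}$ force $\,^{*}\Ext^{>0}_{R}(I,V)=0$ and, via the cosyzygy theory of $\,^{*}$Gorenstein injective modules together with closure under direct summands, $V$ is $\,^{*}$Gorenstein injective; then Theorem \ref{thm resolving} applied to $0\to G_{1}\to V\to C\to0$ (whose submodule $G_{1}$ is $\,^{*}$Gorenstein injective) gives that $C$ is $\,^{*}$Gorenstein injective, since $V$ is. I expect the delicate point, and the main obstacle, to be precisely the claim that $V$ is $\,^{*}$Gorenstein injective, that is, that the class is stable under forming such kernels once the relevant $\,^{*}\Hom$-exactness holds; this is where the graded versions of Proposition \ref{quasi-iso} and of the structural results dual to \cite[Theorems 2.5 and 2.20]{H} must be deployed with care. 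Once the key lemma is secured, the two displayed formulas for $\,^{*}\Gid_{R}N$ drop out by comparing $(1)$ with $(2)$ and $(3)$.
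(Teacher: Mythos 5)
Your overall architecture is the right one (it is the dual of Holm's proof of \cite[Theorem 2.20]{H}, which is exactly what the paper invokes), and the basic vanishing observation, the implications $(1)\Rightarrow(3)$, $(2)\Leftrightarrow(3)$ and $(4)\Rightarrow(1)$ are fine. But there is a genuine error in your proof of $(1)\Rightarrow(4)$. The generalized Schanuel lemma does not apply to two coresolutions whose middle terms are merely $^*$Gorenstein injective: Schanuel requires the comparison chain map to exist, which needs honest $^*$injective (or, in the syzygy version, projective) middle terms. The asserted isomorphism $C^n\oplus B\cong B'$ is false in general; already for $n=1$, taking $N$ $^*$Gorenstein injective but not $^*$injective, the sequences $0\to N\to N\to 0\to 0$ and $0\to N\to E\to E/N\to 0$ (with $E$ the $^*$injective envelope) would force $N$ to split off $E$. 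The correct tool is the dual of \cite[Proposition 1.8]{H}: since the class of $^*$Gorenstein injectives is $^*$injectively resolving (Theorem \ref{thm resolving}) and closed under direct summands, a pushout comparison of the two coresolutions, one short exact sequence at a time, shows that $C^n$ is $^*$Gorenstein injective if and only if the $n$-th cosyzygy of any other such coresolution is. Equivalently one can compare against an honest $^*$injective resolution via a chain map and its mapping cone, which is precisely the device used in the last paragraph of the proof of Theorem \ref{thm Gid N} and in Corollary \ref{triangle}.

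The second soft spot is the one you flag yourself: in the key lemma for $(3)\Rightarrow(1)$, the claim that the pullback $V$ is $^*$Gorenstein injective does not follow from ``cosyzygy theory plus direct summands'' alone, again because the kernel-comparison form of Schanuel fails for $^*$injective middle terms. It can be repaired, but only by using the hypothesis on $C$: one checks $\,^*\Ext^1_R(I,V)=0$ (this needs $\,^*\Ext^1_R(I,C)=0$, not just the $\Hom$-exactness of the complete resolution of $G^0$ and the Gorenstein injectivity of $C^1$), then forms the pullback of $T\to C^1\leftarrow E$ where $0\to G'\to E\to C^1\to 0$ comes from a complete resolution of $C^1$; both resulting sequences $0\to V\to P\to E\to 0$ and $0\to G'\to P\to T\to 0$ split (the first by $\,^*\Ext^1_R(E,V)=0$, the second by the basic vanishing), giving $V\oplus E\cong G'\oplus T$ and hence $V$ $^*$Gorenstein injective by closure under summands. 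A shorter route, and the one implicit in Holm's own treatment (and used elsewhere in the paper, e.g.\ in the proof that $^*\FGID(R)=\,^*\FID(R)$), is the dual of \cite[Theorem 2.15]{H}: finiteness of $\,^*\Gid_RC$ yields an exact sequence $0\to C\to G\to K\to 0$ with $G$ $^*$Gorenstein injective and $\,^*\id_RK<\infty$; condition $(2)$ (already known equivalent to $(3)$) gives $\,^*\Ext^1_R(K,C)=0$, so this sequence splits and $C$ is a direct summand of $G$. As written, your proposal does not close either of these two steps.
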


The ungraded version of the following theorem is in \cite[Theorem
2.8]{CFH}. The proof uses Proposition \ref{quasi-iso} and the same
technique of proof of \cite[Theorem 2.8]{CFH}.

\begin{thm}\label{iso}
Let $V\stackrel{\simeq}\longrightarrow W$ be a quasiisomorphism
between complexes of graded $R$-modules, where each module in $V$
and $W$ has finite $^*$projective dimension or finite $^*$injective
dimension. If $B\in\, ^*\mathcal{C}_{\sqsubset}(R)$ is a complex of
$^*$Gorenstein injective modules, then the induced morphism
$$
^*\Hom_R(W,B)\to \,^*\Hom_R(V,B)
$$
is a quasiisomorphism under each of the next two condition:
\begin{itemize}
\item[(a)] $V, W\in \,^*\mathcal{C}_{\sqsupset}(R)$, or
\item[(b)] $V, W\in \,^*\mathcal{C}_{\sqsubset}(R)$.
\end{itemize}
\end{thm}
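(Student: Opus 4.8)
The plan is to reduce, via Proposition \ref{quasi-iso}, to the case where the second argument is a single module, and then to exploit a complete $^*$injective resolution of that module. Take $\mathfrak{B}$ to be the class of $^*$Gorenstein injective $R$-modules and $\widetilde{V}=B$. Since $B\in\,^*\mathcal{C}_{\sqsubset}(R)$ is a complex of modules from $\mathfrak{B}$, hypothesis (a) of Proposition \ref{quasi-iso} holds in both cases, so it suffices to verify the module-level input of that proposition: for every $^*$Gorenstein injective module $G$ the morphism $^*\Hom_R(\alpha,G)\colon\,^*\Hom_R(W,G)\to\,^*\Hom_R(V,G)$ is a quasiisomorphism. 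By the identity $\mathcal{M}(^*\Hom_R(\alpha,G))=\Sigma^{1}\,^*\Hom_R(\mathcal{M}(\alpha),G)$ and the fact that a morphism is a quasiisomorphism exactly when its mapping cone is homologically trivial, this amounts to showing that $^*\Hom_R(U,G)$ is homologically trivial, where $U:=\mathcal{M}(\alpha)$. Here $U$ is homologically trivial because $\alpha$ is a quasiisomorphism, and as a complex it is bounded below in case (a) (as $V,W\in\,^*\mathcal{C}_{\sqsupset}(R)$) and bounded above in case (b) (as $V,W\in\,^*\mathcal{C}_{\sqsubset}(R)$).

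The first ingredient is the vanishing $^*\Ext^{i}_R(M,G')=0$ for all $i>0$, whenever $M$ is a graded module with $^*\pd_RM<\infty$ or $^*\id_RM<\infty$ and $G'$ is $^*$Gorenstein injective. If $^*\id_RM<\infty$, this is Proposition \ref{thm Gid} applied to $N=G'$, for which $^*\Gid_RG'=0$. If $^*\pd_RM<\infty$, I would argue by dimension shifting in the second variable: realize $G'$ as a cocycle of a homologically trivial complex of $^*$injective modules (Definition \ref{defn Gid}), let $G'_{-k}$ denote the cocycles reached by moving $k$ steps to the left and $J_{-k}$ the intervening $^*$injective terms; the short exact sequences $0\to G'_{-k}\to J_{-k}\to G'_{-k+1}\to0$ together with $^*\Ext^{\geq1}_R(M,J_{-k})=0$ (each $J_{-k}$ being an injective object of the category of graded modules) give $^*\Ext^i_R(M,G')\cong\,^*\Ext^{i+k}_R(M,G'_{-k})$, and choosing $k$ with $i+k>\,^*\pd_RM$ makes the right-hand side vanish. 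In particular, since every module $U_\ell$ of $U$ has finite $^*$projective or finite $^*$injective dimension, $^*\Ext^{i}_R(U_\ell,G')=0$ for all $i>0$, all $\ell$, and every $^*$Gorenstein injective $G'$.

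Now fix a complete $^*$injective resolution $\mathbf{I}$ of $G$ with $^*$injective terms $I^j$ and cocycles $C^j$ $(j\in\mathbb{Z})$, so that $C^0=G$, each $C^j$ is again $^*$Gorenstein injective, and $0\to C^j\to I^j\to C^{j+1}\to0$ is exact for every $j$. Applying $^*\Hom_R(U,-)$ and using the Ext-vanishing of the previous paragraph (which keeps these sequences exact after $^*\Hom_R(U,-)$), we obtain short exact sequences of complexes $0\to\,^*\Hom_R(U,C^j)\to\,^*\Hom_R(U,I^j)\to\,^*\Hom_R(U,C^{j+1})\to0$. Each $^*\Hom_R(U,I^j)$ is homologically trivial, because $^*\Hom_R(-,I^j)$ is exact and $U$ is homologically trivial; hence the long exact homology sequences give $\H_m(^*\Hom_R(U,C^j))\cong\H_{m+1}(^*\Hom_R(U,C^{j+1}))$ for all $m$ and $j$, and iterating yields $\H_m(^*\Hom_R(U,G))\cong\H_{m+k}(^*\Hom_R(U,C^k))$ for every $k\in\mathbb{Z}$. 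Finally, the one-sided boundedness of $U$ forces $^*\Hom_R(U,C^k)$ to vanish in homological degrees beyond a bound that does not depend on $k$ (above a bound in case (a), below a bound in case (b)); choosing $k$ large and positive in case (a), or large and negative in case (b), makes the right-hand side zero. Thus $\H_m(^*\Hom_R(U,G))=0$ for all $m$, i.e. $^*\Hom_R(U,G)$ is homologically trivial, as required.

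I expect the main obstacle to be the finite $^*$projective dimension half of the Ext-vanishing step, together with correctly pairing the two boundedness cases with the two directions along the complete resolution (and checking that the vanishing bound for $^*\Hom_R(U,C^k)$ is uniform in $k$); the initial reduction through Proposition \ref{quasi-iso} and the translation into the mapping cone are formal.
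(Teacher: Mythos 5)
Your proof is correct and takes essentially the same route as the paper: the paper's own proof consists exactly of the reduction through Proposition \ref{quasi-iso} followed by ``the same technique of proof of \cite[Theorem 2.8]{CFH}'', which is precisely the complete-resolution and Ext-vanishing argument you spell out at the module level. One cosmetic point: a module $U_\ell=W_\ell\oplus V_{\ell-1}$ of the mapping cone need not itself have finite $^*$projective or finite $^*$injective dimension, but the vanishing $^*\Ext^{i}_R(U_\ell,G')=0$ for $i>0$ that you actually use still holds, by applying your Ext-vanishing separately to each summand and using additivity of $^*\Ext$ in the first variable.
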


\begin{cor}\label{2.12}
Assume that $Y\simeq B$ where $B\in$ $^*\mathcal{C}_{\sqsubset}(R)$
is a complex of $^*$Gorenstein injective modules. If $U\simeq V$,
where $V\in$ $^*\mathcal{C}_{\sqsupset}(R)$ is a complex in which
each module has finite $^*$projective dimension or finite
$^*$injective dimension, then
$$\mathbf{R}^*\!\Hom_{R}(U, Y)\simeq \,^*\Hom_{R}(V, B).$$
\end{cor}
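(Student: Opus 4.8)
The plan is to reduce the claim to a comparison between the derived and the underived $^*\Hom$-complexes of $V$ and $B$, and then to feed this into Theorem \ref{iso}. First, since $U\simeq V$ and $Y\simeq B$ are isomorphisms in $\,^*\mathcal{D}(R)$, and since $\mathbf{R}^*\!\Hom_{R}(-,-)$ carries quasiisomorphisms in either variable to isomorphisms in the derived category, we get $\mathbf{R}^*\!\Hom_{R}(U,Y)\simeq\mathbf{R}^*\!\Hom_{R}(V,B)$. Hence it suffices to show that the canonical comparison morphism $\,^*\Hom_R(V,B)\to\mathbf{R}^*\!\Hom_{R}(V,B)$ is a quasiisomorphism, that is, that the underived homomorphism complex already computes the right derived functor for this particular pair.

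Next I would compute $\mathbf{R}^*\!\Hom_{R}(V,B)$ by resolving the contravariant variable. Because $V\in\,^*\mathcal{C}_{\sqsupset}(R)$, it admits a $^*$projective resolution $P\stackrel{\simeq}{\longrightarrow}V$ by \cite[Page 48]{Ha}, where $P\in\,^*\mathcal{C}_{\sqsupset}(R)$ is a bounded-below complex of $^*$projective modules; such a resolution is semiprojective, so $\mathbf{R}^*\!\Hom_{R}(V,B)\simeq\,^*\Hom_R(P,B)$. Under this identification, the canonical morphism of the previous paragraph is precisely the map $\,^*\Hom_R(V,B)\to\,^*\Hom_R(P,B)$ induced by $P\stackrel{\simeq}{\longrightarrow}V$, and we are reduced to proving that this induced map is a quasiisomorphism. (Resolving the covariant variable $B$ injectively would be the natural alternative, but the available tool, Theorem \ref{iso}, is designed to change the contravariant variable, so a $^*$projective resolution of $V$ is the right move.)

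Finally I would apply Theorem \ref{iso} to the quasiisomorphism $P\stackrel{\simeq}{\longrightarrow}V$. Each module of $P$ is $^*$projective, hence of finite $^*$projective dimension, and each module of $V$ has finite $^*$projective dimension or finite $^*$injective dimension by assumption; moreover $P,V\in\,^*\mathcal{C}_{\sqsupset}(R)$ and $B\in\,^*\mathcal{C}_{\sqsubset}(R)$ is a complex of $^*$Gorenstein injective modules. Thus condition (a) of Theorem \ref{iso} is satisfied, and the induced morphism $\,^*\Hom_R(V,B)\to\,^*\Hom_R(P,B)$ is a quasiisomorphism. Chaining the isomorphisms $\mathbf{R}^*\!\Hom_{R}(U,Y)\simeq\mathbf{R}^*\!\Hom_{R}(V,B)\simeq\,^*\Hom_R(P,B)\simeq\,^*\Hom_R(V,B)$ then yields the desired conclusion.

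The genuine content of the argument is carried entirely by Theorem \ref{iso}; once that is in hand, the remaining work is routine. The two points deserving a little care are the verification that a bounded-below $^*$projective resolution computes $\mathbf{R}^*\!\Hom_{R}(V,-)$, and the bookkeeping needed to confirm that the resolution $P$ stays within $\,^*\mathcal{C}_{\sqsupset}(R)$ so that one may invoke condition (a) rather than (b) of Theorem \ref{iso}. I do not anticipate any serious obstacle beyond these checks.
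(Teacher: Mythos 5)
Your proof is correct and follows essentially the same route as the paper, which simply defers to the argument of \cite[Corollary 2.10]{CFH}: replace $U$ (equivalently $V$) by a right-bounded $^*$projective resolution $P$, note that $\,^*\Hom_R(P,B)$ represents $\mathbf{R}^*\!\Hom_R(U,Y)$, and apply Theorem \ref{iso}(a) to the quasiisomorphism $P\to V$ to identify $\,^*\Hom_R(V,B)$ with $\,^*\Hom_R(P,B)$. The two checks you flag (that a bounded-below complex of $^*$projectives is semiprojective, and that $P$ lies in $\,^*\mathcal{C}_{\sqsupset}(R)$) are exactly the routine points the paper leaves implicit.
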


\begin{proof} Is the same as \cite[Corollary 2.10]{CFH} using Theorem
\ref{iso}(a).
\end{proof}

The ungraded version of the following theorem is contained in
\cite[Theorem 3.3]{CFH}, and its proof is dual of \cite[Theorem
3.1]{CFH}. We present the proof of (3)$\Rightarrow$(4) for later
use.


\begin{thm}\label{thm Gid N}
Let $Y\in$ $^{*}\mathcal{D}_{\sqsubset}(R)$ be a complex of finite
$^{*}$Gorenstein injective dimension. For $n\in \mathbb{Z}$ the
following are equivalent:
\begin{itemize}
\item[(1)]$^{*}\Gid_{R}Y \leq n.$
\item[(2)] $n\geq -\sup U-\inf \mathbf{R}^{*}\!\Hom_{R}(U,Y)$ for all
$U\in$ $^{*}\mathcal{D}_{\square}(R)$ of finite $^{*}$projective or
finite $^{*}$injective dimension with $H(U)\neq 0$.
\item[(3)]$n\geq -\inf \mathbf{R}^{*}\!\Hom_{R}(J,Y)$ for all $^{*}$injective
$R$-modules $J$.
\item[(4)]$n\geq -\inf Y$ and for any left-bounded complex $B\simeq Y$ of
$^{*}$Gorenstein injective modules, the $\Ker(B_{-n}\rightarrow
B_{-(n+1)})$ is a $^*$Gorenstein injective module.
\end{itemize}
Moreover the following hold:
\begin{align*}
^*\Gid_{R}Y=& \sup\{-\sup(U)-\inf
\mathbf{R}^*\Hom_{R}(U,Y)| ^*\id_{R}(U)<\infty \text{ and } H(U)\neq 0\}\\
=&\sup\{-\inf \mathbf{R}^{*}\!\Hom_{R}(J,Y)| J \text{ is }
^{*}\text{injective}\}.
\end{align*}
\end{thm}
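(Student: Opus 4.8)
The plan is to prove the four equivalences in a cycle, following the dual pattern of the ungraded argument in \cite[Theorem 3.1]{CFH} but carried out in the category $\,^*\mathcal{C}(R)$, and then to read off the two displayed formulas from the equivalences together with Proposition \ref{thm Gid}. I would establish $(1)\Rightarrow(2)\Rightarrow(3)\Rightarrow(4)\Rightarrow(1)$.

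\smallskip

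First, for $(1)\Rightarrow(2)$, assume $\,^*\Gid_RY\le n$, so $Y\simeq B$ with $B\in\,^*\mathcal{C}_{\sqsubset}(R)$ a complex of $^*$Gorenstein injective modules concentrated in degrees $\ge -n$. Given $U\in\,^*\mathcal{D}_{\square}(R)$ of finite $^*$projective or $^*$injective dimension, I replace $U$ by a complex $V\in\,^*\mathcal{C}_{\sqsupset}(R)$ whose modules have finite $^*$projective or $^*$injective dimension, and invoke Corollary \ref{2.12} to get $\mathbf{R}\,^*\Hom_R(U,Y)\simeq\,^*\Hom_R(V,B)$. A degree count on the complex $\,^*\Hom_R(V,B)$ (whose entries vanish below degree $-\sup U-n$ once $V$ is bounded appropriately) yields $\inf\mathbf{R}\,^*\Hom_R(U,Y)\ge -\sup U-n$, which is exactly the inequality in (2). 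The implication $(2)\Rightarrow(3)$ is obtained by specializing: a $^*$injective module $J$ has finite $^*$injective dimension and sits in $\,^*\mathcal{D}_{\square}(R)$ with $\sup J=0$, so (2) gives $n\ge-\inf\mathbf{R}\,^*\Hom_R(J,Y)$ directly.

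\smallskip

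The implication $(3)\Rightarrow(4)$ is the one I would write out in full, since the paper flags it for later use. Assuming (3), I first note that taking $J$ to range over $^*$injective modules and using Theorem \ref{iid} (or the definition of $^*\id$) forces $\,^*\Ext^i_R(J,Y)=0$ for $i>n$, whence $n\ge-\inf Y$ after checking the homology of $Y$ does not extend below degree $-n$. Then, given a left-bounded complex $B\simeq Y$ of $^*$Gorenstein injective modules, I form the syzygy $C:=\Ker(B_{-n}\to B_{-(n+1)})$ and build the truncation giving a short exact sequence $0\to C\to B_{-n}\to B_{-(n+1)}\to\cdots$; computing $\,^*\Ext^i_R(J,C)$ via this resolution and the vanishing from (3), together with the characterization in Proposition \ref{thm Gid} (specifically the equivalence $(1)\Leftrightarrow(3)$ there applied with $n=0$), shows $\,^*\Ext^{>0}_R(J,C)=0$ for all $^*$injective $J$, so $C$ is $^*$Gorenstein injective. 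Finally $(4)\Rightarrow(1)$ is immediate: the existence of such a $B$ with $^*$Gorenstein injective syzygy at spot $-n$ exhibits a $^*$Gorenstein injective resolution of $Y$ realizing $\,^*\Gid_RY\le n$ by definition.

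\smallskip

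The two concluding formulas then follow formally. The equality $\,^*\Gid_RY=\sup\{-\inf\mathbf{R}\,^*\Hom_R(J,Y)\mid J\ \text{is}\ ^*\text{injective}\}$ is the direct translation of $(1)\Leftrightarrow(3)$, and the first formula, with $U$ ranging over complexes of finite $^*$injective dimension, comes from $(1)\Leftrightarrow(2)$ restricted to the $^*$injective case (the $^*$projective case giving no smaller supremum since $^*$injectives are already admissible test objects). I expect the main obstacle to be the bookkeeping in $(3)\Rightarrow(4)$: one must be careful that the syzygy $C$ genuinely inherits $^*$Gorenstein injectivity, which requires the exactness of $\,^*\Hom_R(E,-)$ applied to a \emph{totally} $^*$acyclic complex built from $B$, and this is where Theorem \ref{thm resolving} (the $^*$injectively resolving property) and Proposition \ref{thm Gid} must be combined rather than just cited. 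The boundedness hypotheses needed to apply Corollary \ref{2.12} in $(1)\Rightarrow(2)$ also require attention, but these are handled by the finiteness of $^*\Gid_RY$.
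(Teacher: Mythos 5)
Your overall architecture --- the cycle $(1)\Rightarrow(2)\Rightarrow(3)\Rightarrow(4)\Rightarrow(1)$, with $(1)\Rightarrow(2)$ via Corollary \ref{2.12} and a degree count, $(2)\Rightarrow(3)$ by specialization, and $(4)\Rightarrow(1)$ read off from the definition --- is the same as the paper's. The gaps are both inside $(3)\Rightarrow(4)$, which is precisely the implication the paper writes out in full. First, your derivation of $n\geq-\inf Y$ does not work as stated: condition (3) only gives vanishing of $\,^*\Ext^i_R(J,Y)$ for $^*$injective test modules $J$, whereas Theorem \ref{iid} detects $\inf Y$ by testing against $R$ (or cyclic modules $R/J$), which are not $^*$injective; ``checking the homology of $Y$ does not extend below degree $-n$'' is exactly the content to be proved, not a routine verification. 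What is actually needed is to exhibit a $^*$injective module $J$ with $-\inf\mathbf{R}\,^*\Hom_R(J,Y)\geq-\inf Y$. The paper does this using the finiteness of $g:=\,^*\Gid_RY$: take a bounded complex $B$ of $^*$Gorenstein injectives ending in degree $-g$; if $g=-\inf Y$, a $^*$injective $J$ surjecting onto $B_{-g}$ (which exists by the definition of $^*$Gorenstein injectivity) forces $\,^*\Hom_R(J,B)$ to have nonzero homology in degree $-g$, and if $g>-\inf Y$ one passes to the syzygy $Z^{B}_{t}$, shows $\,^*\Gid_RZ^{B}_{t}=t+g$, and applies Proposition \ref{thm Gid} to produce the required $J$.

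Second, your treatment of the syzygy $C=\Ker(B_{-n}\to B_{-(n+1)})$ for an \emph{arbitrary} left-bounded $B\simeq Y$ rests on Proposition \ref{thm Gid}, but that proposition carries ``finite $^*$Gorenstein injective dimension'' as a standing hypothesis. For a \emph{bounded} complex $\tilde{B}$ this hypothesis holds for $Z^{\tilde{B}}_{-n}$, since the tail of $\tilde{B}$ is a finite coresolution by $^*$Gorenstein injectives; but for an arbitrary left-bounded $B$ the tail may be infinite and you have no a priori bound on $\,^*\Gid_RC$, so the $\Ext$-vanishing criterion cannot be applied to $C$ directly. You flag this difficulty in your closing paragraph but do not resolve it. The paper's fix is a separate comparison step: prove the claim first for a bounded $\tilde{B}$, then for arbitrary $B$ choose an $^*$injective resolution $I$ of $Y$, lift to a quasiisomorphism $\pi\colon B\to I$, truncate at spot $n$, and observe that the mapping cone of the truncated map is a bounded exact complex all of whose terms except the two right-most are $^*$Gorenstein injective; Theorem \ref{thm resolving} then shows that $Z^{I}_{-n}$ is $^*$Gorenstein injective if and only if $Z^{B}_{-n}$ is. You need to supply this (or an equivalent) transfer argument for the proof to be complete.
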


\begin{proof}
(2)$\Rightarrow$(3) and (4)$\Rightarrow $(1) are clear.
$(1)\Rightarrow(2)$ is dual of $(1)\Rightarrow(2)$ in \cite[Theorem
3.1]{CFH}.



(3)$\Rightarrow$(4): To establish $n\geq -\inf Y$, it is sufficient
to show that
$$
\sup\{-\inf \mathbf{R}^{*}\!\Hom_{R}(J,Y) |J \text{ is }
^{*}\text{injective}\}\geq -\inf Y. \quad (*)
$$
By assumption $g:=\!^*\Gid_R(Y)$ is finite, so $Y\simeq B$ for some
complex of $^*$Gorenstein injective modules:
$$B= 0\to B_{s}\to B_{s-1}\to \cdots\to B_{-g+1}\to B_{-g}\to 0.$$
Now it is clear that $-g\leq \inf Y$. By Lemma \ref{2.12}, for any
$^*$injective module $J$, the complex $\!^*\Hom_{R}(J,B)$ is
isomorphic to $\mathbf{R}^*\Hom_{R}(J,Y)$ in $^*\mathcal{D}(R)$. If
$g=-\inf Y$ then the differential $\partial_{-g+1}:B_{-g+1}\to
B_{-g}$ is not surjective. Now by the definition of $^*$Gorenstein
injective modules there exists an $^*$injective module $J$ such that
$J\to B_{-g}$ is surjective. Notice that the differential
$^*\Hom_R(J,\partial_{-g+1})$ in the complex $^*\Hom_{R}(J,B)$ is
not surjective, for otherwise, for any $\phi\in
^*\Hom_{R}(J,B_{-g})$ there exists a $\psi\in
^*\Hom_{R}(J,B_{-g+1})$ such that $\phi=\partial_{-g+1}\psi$. This
implies that $\partial_{-g+1}$ is surjective which is a
contradiction. Therefore $^*\Hom_{R}(J,B)$ has nonzero homology in
degree $-g=\inf Y$. This gives $(*)$. Next assume that $g>-\inf
Y=-t$ and consider the exact sequence
$$
B: 0\to Z_{t}^{B}\to B_{t}\to B_{t-1} \to\cdots\to B_{-g+1}\to
B_{-g}\to 0.\quad (**)
$$
It shows that $^*\Gid_R Z^{B}_{t}\leq t+g$ and it is not difficult
to see that the equality must hold. For otherwise $^*\Gid_RY<g$ and
by Proposition \ref{thm Gid},
$\H_{-g}(\mathbf{R}^*\Hom_{R}(J,Y))\cong\!^*\Ext^{t+g}(J,Z^{B}_{t})\neq
0$ for some $^*$injective module $J$. Therefore $(*)$ follows, which
gives the inequality $n\geq -\inf Y$.

To prove the second part of (4) let $B$ be a left-bounded complex of
$^*$Gorenstein injective modules such that $B\simeq Y$. By
assumption $^*\Gid_RY$ is finite, so there exists a bounded complex
$\tilde{B}$ of $^*$Gorenstein injective modules such that
$\tilde{B}\simeq Y$. Since $n\geq -\inf Y=-\inf \tilde{B}$, the
kernel $Z^{\tilde{B}}_{n}$ fits in an exact sequence
$$0\to Z^{\tilde{B}}_{-n}\to \tilde{B}_{-n}\to
\tilde{B}_{-n-1}\to\cdots\to\tilde{B}_{t}\to0.$$ By Proposition
\ref{thm Gid} and the isomorphism
$^*\Ext^{i}(J,Z^{\tilde{B}}_{-n})\simeq
\H_{-(n+i)}(\mathbf{R}^*\Hom_{R}(J,Y))=0$ for $i>0$ we get that
$Z^{\tilde{B}}_{-n}$ is $^*$Gorenstein injective.

Now it is enough to prove that if $I$ and $B$ are left bounded
complexes of respectively $^*$injective and $^*$Gorenstein injective
modules and $B\simeq Y \simeq I$ then the kernel $Z^{I}_{-n}$ is
$^*$Gorenstein injective if and only if $Z^{B}_{-n}$ is so. Let $B$
and $I$ be such complexes. As $I$ consists of $^*$injectives by
\cite[Chapter I, Lemma 4.5]{Ha} there is a quasi isomorphism
$\pi:B\rightarrow I$ which induces a quasi isomorphism between the
complexes $\pi\supset_{n}:B\supset_{n}\rightarrow I\supset_{n}$. The
mapping cone
$$\mathcal{M}(\pi\supset_{n})=\cdots \to B_{n-1}\oplus I_{n}\to
B_{n}\oplus I_{n+1}\to B_{n+1}\oplus Z^{B}_{n}\to Z^{I}_{n}\to 0$$
is bounded exact such that all modules but the two right-most ones
are $^*$Gorenstein injective modules. It follows by Theorem \ref{thm
resolving} that $Z^{I}_{n}$ is $^*$Gorenstein injective if and only
if $B_{n+1}\oplus Z^{B}_{n}$ is so, which is tantamount to
$Z^{B}_{n}$ being $^*$Gorenstein injective.


The two equalities are immediate consequences of the equivalence of
(1)-(4).
\end{proof}

\begin{cor}\label{gid<id}
Let $Y\in$ $^{*}\mathcal{D}_{\sqsubset}(R)$. Then
$$^*\Gid_{R}Y\leq ^{*}\id_{R}Y,$$ with equality if $^*\id_{R}Y$ is
finite.
\end{cor}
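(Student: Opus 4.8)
The plan is to prove the inequality first by a direct resolution argument, and then obtain the equality (when $^*\id_RY<\infty$) by exhibiting a single $^*$injective module that detects the $^*$injective dimension, feeding it into the test-object formula from Theorem \ref{thm Gid N}.

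For the inequality $^*\Gid_RY\leq {}^*\id_RY$ there is nothing to prove if $^*\id_RY=\infty$, so assume $^*\id_RY=d<\infty$. Then $Y$ has a $^*$injective resolution $I$ with $I_j=0$ for $j<-d$. Since every $^*$injective module is $^*$Gorenstein injective (noted right after Definition \ref{defn Gid}), $I$ is a bounded complex of $^*$Gorenstein injective modules isomorphic to $Y$, so $^*\Gid_RY\leq d$ by the definition of $^*\Gid$. In particular $^*\Gid_RY<\infty$, so Theorem \ref{thm Gid N} is available.

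For the equality when $^*\id_RY=d<\infty$ (the case $Y\simeq 0$ being trivial) it remains to show $^*\id_RY\leq {}^*\Gid_RY$. By the last displayed formula of Theorem \ref{thm Gid N}, $^*\Gid_RY=\sup\{-\inf\mathbf{R}^*\!\Hom_R(J,Y)\mid J\text{ is }^*\text{injective}\}$, so it suffices to produce one $^*$injective $J$ with $^*\Ext^d_R(J,Y)\neq0$; indeed this forces $\inf\mathbf{R}^*\!\Hom_R(J,Y)\leq -d$ and hence $^*\Gid_RY\geq d$. I would fix the above resolution $I$ (with $I_j=0$ for $j<-d$) and take $J:=I_{-d}$, noting first that $I_{-d}\neq 0$: otherwise $\Ker\partial_{-d+1}=I_{-d+1}$ is $^*$injective, which by Theorem \ref{iid}(4) would give $^*\id_RY\leq d-1$. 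Computing $^*\Ext^d_R(J,Y)$ from the complex $^*\Hom_R(J,I)$ and using $I_{-d-1}=0$, this group equals $\Coker\bigl(^*\Hom_R(I_{-d},\partial_{-d+1})\bigr)$, so the task reduces to showing that $1_{I_{-d}}$ does not lie in the image of the induced map $^*\Hom_R(I_{-d},I_{-d+1})\to {}^*\Hom_R(I_{-d},I_{-d})$.

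The crux, and the step I expect to demand the most care, is exactly this non-surjectivity. If $1_{I_{-d}}=\partial_{-d+1}\circ g$ for some homogeneous degree-zero $g\colon I_{-d}\to I_{-d+1}$, then every element of $I_{-d}$ lies in $\Im\partial_{-d+1}$, so $\partial_{-d+1}$ is a split epimorphism; consequently $H_{-d}(Y)=\Coker\partial_{-d+1}=0$, whence $-\inf Y\leq d-1$, and $\Ker\partial_{-d+1}$ is a direct summand of the $^*$injective module $I_{-d+1}$, hence itself $^*$injective. Both conditions of Theorem \ref{iid}(4) for $n=d-1$ are then met, forcing $^*\id_RY\leq d-1$ and contradicting $^*\id_RY=d$. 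Therefore $^*\Ext^d_R(I_{-d},Y)\neq0$, giving $^*\Gid_RY\geq d={}^*\id_RY$ and the reverse inequality. The main subtlety to monitor is that this splitting argument must be phrased so as to handle simultaneously the possibility of nonvanishing top homology $H_{-d}(Y)$; the unified formulation above avoids a case split and needs no minimality hypothesis, since Theorem \ref{iid}(4) applies to an arbitrary $^*$injective resolution.
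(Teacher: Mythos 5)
Your proof is correct. The paper gives no argument of its own here — it defers to \cite[Proposition 6.2.6]{C}, citing Theorem \ref{thm Gid N} and Corollary \ref{ineq} — and your overall strategy agrees with that of the deferred proof: the inequality is immediate from the definition because $^*$injective modules are $^*$Gorenstein injective, and the equality is reduced, via the formula $^*\Gid_{R}Y=\sup\{-\inf \mathbf{R}^{*}\!\Hom_{R}(J,Y)\mid J \text{ $^*$injective}\}$ of Theorem \ref{thm Gid N}, to exhibiting one $^*$injective module $J$ with ${}^*\Ext^d_R(J,Y)\neq 0$, where $d={}^*\id_RY$. Where you genuinely diverge is in how that test module is produced. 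The argument the paper points to obtains $J$ from a cyclic module $R/\fa$ detecting $^*\id_RY$ (via Theorem \ref{iid}) by passing to an $^*$injective (pre)envelope and using the vanishing of ${}^*\Ext^{d+1}$ to transfer nonvanishing, and it needs Corollary \ref{ineq} to mediate between graded and ungraded finiteness of injective dimension. You instead take $J=I_{-d}$, the last term of a length-$d$ resolution, and show that $1_{I_{-d}}$ cannot factor through $\partial_{-d+1}$: such a factorization would split $\partial_{-d+1}$, making $\Ker\partial_{-d+1}$ a summand of the $^*$injective $I_{-d+1}$ and killing $H_{-d}(Y)$, so Theorem \ref{iid}(4) would force $^*\id_RY\leq d-1$. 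This is self-contained, avoids both Corollary \ref{ineq} and $^*$injective envelopes, and treats the case $H_{-d}(Y)\neq 0$ uniformly; the points needing care — that $I_{-d}\neq 0$, that the lifting $g$ may be taken homogeneous of degree zero, and that Theorem \ref{thm Gid N} is only invoked after finiteness of $^*\Gid_RY$ has been secured — are all handled correctly.
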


\begin{proof} Is the same as \cite[Proposition 6.2.6]{C} using Theorem \ref{thm Gid N} and Corollary \ref{ineq}.
\end{proof}

Recall the \emph{finitistic injective dimension} of $R$ which
defined as
$$
\FID(R):=\sup\{\id_RM|M\text{ is an }R\text{-module with
}\id_RM<\infty\}.
$$
The \emph{finitistic Gorenstein injective dimension}
$\mbox{FGID}(R)$, \emph{finitistic $^*$injective dimension}
$^*\FID(R)$ and \emph{finitistic $^*$Gorenstein injective dimension}
$^*\mbox{FGID}(R)$ are define similarly. In \cite[Theorem 2.29]{H},
Holm proved that $\mbox{FGID}(R)=\FID(R)$. Also it is known that
over a commutative Noetherian ring $R$ we have $\FID(R)\leq\dim R$
by \cite[Corollary 5.5]{B} and \cite[II. Theorem 3.2.6]{RG}.

The following theorem is the graded version of Holm's result
\cite[Theorem 2.29]{H}. Its proof is dual of \cite[Theorem 2.28]{H}.
We give the sketch of proof.

\begin{thm} If $R$ is any graded ring $^*\FGID(R)=^*\FID(R)$.
\end{thm}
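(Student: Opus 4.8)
The plan is to prove the two inequalities $^*\FGID(R)\leq\,^*\FID(R)$ and $^*\FID(R)\leq\,^*\FGID(R)$ separately, following the pattern of Holm's ungraded argument \cite[Theorem 2.28]{H} but carried out in the graded category using the machinery developed above. The inequality $^*\FGID(R)\leq\,^*\FID(R)$ requires showing that whenever a graded module $M$ has finite $^*$Gorenstein injective dimension, one can bound $^*\Gid_RM$ by $^*\FID(R)$. First I would set $n:=\,^*\FID(R)$ and, using the characterization in Proposition \ref{thm Gid}, reduce the claim to showing that $^*\Ext^i_R(L,M)=0$ for all $i>n$ and all graded modules $L$ with finite $^*$injective dimension. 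The point is that for such $L$ one automatically has $^*\id_RL\leq\,^*\FID(R)=n$, so a suitable $^*$injective resolution of $L$ vanishes in the relevant degrees; combining this with the finiteness of $^*\Gid_RM$ and the vanishing formulas in Proposition \ref{thm Gid} should force the Ext-groups above degree $n$ to vanish.

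For the reverse inequality $^*\FID(R)\leq\,^*\FGID(R)$ I would argue contrapositively in the style of Holm: given a graded module $M$ with $^*\id_RM=n<\infty$, the goal is to produce a graded module whose finite $^*$Gorenstein injective dimension is at least $n$, thereby bounding $^*\FID(R)$ from above by $^*\FGID(R)$. The natural candidate is $M$ itself together with a carefully chosen test module. Concretely, since $^*\id_RM=n$, Theorem \ref{iid} guarantees a homogeneous ideal $J$ with $^*\Ext^n_R(R/J,M)\neq0$. One then uses Corollary \ref{gid<id}, which gives $^*\Gid_RM=\,^*\id_RM=n$ when the $^*$injective dimension is finite, to conclude that $^*\Gid_RM=n$ while $^*\Gid_RM<\infty$; this exhibits a module of finite $^*$Gorenstein injective dimension equal to $n$, so $n\leq\,^*\FGID(R)$.

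Assembling these, I would record the combined chain
\begin{align*}
^*\FID(R)\leq\,^*\FGID(R)\leq\,^*\FID(R),
\end{align*}
whence equality. The key structural inputs are the graded vanishing criteria of Proposition \ref{thm Gid}, the comparison $^*\Gid_RY\leq\,^*\id_RY$ with equality in the finite case from Corollary \ref{gid<id}, and the $^*$injective-dimension characterization of Theorem \ref{iid}; each of these is the faithful graded analogue of the ungraded ingredient used in \cite{H}, so the arguments transfer mechanically once they are in place.

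The main obstacle I anticipate is the inequality $^*\FGID(R)\leq\,^*\FID(R)$, specifically controlling the Ext-vanishing uniformly over \emph{all} graded modules $L$ of finite $^*$injective dimension rather than just over $^*$injective modules or modules of a fixed finite dimension. The delicate point is to verify that the finitistic bound $n=\,^*\FID(R)$ indeed suffices in the formula $^*\Gid_RM=\sup\{i\mid\,^*\Ext^i_R(L,M)\neq0,\ ^*\id_RL<\infty\}$ from Proposition \ref{thm Gid}; one must rule out that some test module $L$ with $^*\id_RL$ finite but unbounded by the finitistic supremum could contribute in degrees above $n$. This is handled exactly as in the ungraded proof by noting that each such $L$ individually satisfies $^*\id_RL\leq\,^*\FID(R)$ by the very definition of the finitistic dimension, so no dévissage beyond what the definition provides is needed; the graded category causes no extra trouble here because all the relevant homological functors and dimensions behave as in Holm's setting, as guaranteed by the results assembled in Sections 2 and 3.
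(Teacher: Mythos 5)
Your handling of the inequality $^*\FID(R)\leq\,^*\FGID(R)$ via Corollary \ref{gid<id} is correct and agrees with the paper. The genuine gap is in the hard direction $^*\FGID(R)\leq\,^*\FID(R)$. You reduce it to showing $^*\Ext^i_R(L,M)=0$ for all $i>n:=\,^*\FID(R)$ and all graded $L$ with finite $^*$injective dimension, and you justify this by saying that $^*\id_RL\leq n$ together with Proposition \ref{thm Gid} ``forces'' the vanishing. It does not: the only vanishing Proposition \ref{thm Gid} supplies is $^*\Ext^i_R(L,M)=0$ for $i>\,^*\Gid_RM$, i.e.\ above the very quantity you are trying to bound, not above $^*\id_RL$. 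Dimension-shifting along a finite $^*$injective resolution of $L$ only reduces the claim to the case where $L$ is $^*$injective, and then the assertion ``$^*\Ext^i_R(J,M)=0$ for $i>n$ and all $^*$injective $J$'' is, by Proposition \ref{thm Gid}(3), literally the statement $^*\Gid_RM\leq n$ that you set out to prove; the argument is circular. (In general $\Ext^i(L,-)$ does not vanish above $\id L$: already $\Ext^1_{\mathbb{Z}}(\mathbb{Q},\mathbb{Z})\neq0$ although $\mathbb{Q}$ is injective.)

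What is actually required --- and what the paper does by invoking the technique of \cite[Lemma 2.2]{SSH}, dual to Holm's proof of \cite[Theorem 2.28]{H} --- is a construction: from a module $N$ with $0<\,^*\Gid_RN=m<\infty$ one must manufacture a graded module $T$ with $^*\id_RT=m$ exactly, which then gives $m\leq\,^*\FID(R)$. Concretely, the graded version of \cite[Theorem 2.15]{H} yields an exact sequence $0\to N\to G\to C\to0$ with $G$ $^*$Gorenstein injective and $^*\id_RC=m-1$; choosing a surjection $E\to G$ with $E$ $^*$injective and $^*$Gorenstein injective kernel $G'$, the pullback $T$ of $N\to G\leftarrow E$ sits in $0\to T\to E\to C\to0$ and $0\to G'\to T\to N\to0$. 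The first sequence gives $^*\id_RT\leq m$, and the second gives $^*\id_RT\geq m$ because $^*\Ext^m_R(L,T)$ surjects onto some nonzero $^*\Ext^m_R(L,N)$ (the connecting map lands in $^*\Ext^{m+1}_R(L,G')=0$ for $L$ of finite $^*$injective dimension). Without some construction of this kind the inequality $^*\FGID(R)\leq\,^*\FID(R)$ does not follow from the vanishing criteria alone.
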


\begin{proof} Using Corollary \ref{gid<id} we have
$^*\FID(R)\leq\!^*\FGID(R)$. If $N$ is a graded $R$-module with
$0<\,^*\Gid_RN<\infty$, then the graded version of \cite[Theorem
2.15]{H} gives a graded $R$-module $C$ such that
$^*\id_RC=\,^*\Gid_RN-1$. Therefore $^*\FGID(R)\leq\,^*\FID(R)+1$.
Now for the reverse inequality $\!^*\FGID(R)\leq\,^*\FID(R)$, assume
that $0<\,^*\FGID(R)=m<\infty$. Pick a module $N$ with
$^*\Gid_RN=m$. Hence the same technique of \cite[Lemma 2.2]{SSH},
gives a graded module $T$ such that $^*\id_RT=m$.
\end{proof}

\begin{cor}\label{FID}
Let $Y\in$ $^{*}\mathcal{D}_{\sqsubset}(R)$ be a complex of finite
$^{*}$Gorenstein injective dimension. Then
$$
^*\Gid_{R}Y\leq \FID(R)-\inf Y.
$$
\end{cor}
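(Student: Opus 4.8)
The plan is to reduce the complex case to the module case via the key structural result in Theorem \ref{thm Gid N}, specifically the equivalence of (1) and (4). Since $Y\in{}^*\mathcal{D}_{\sqsubset}(R)$ has finite $^*$Gorenstein injective dimension, say $g={}^*\Gid_RY$, there is a bounded complex $B\simeq Y$ of $^*$Gorenstein injective modules concentrated in degrees $s$ down to $-g$, where $-g=\inf Y$ by part (4) of Theorem \ref{thm Gid N}. First I would truncate: from the syzygy-type exact sequence appearing in $(**)$ of the proof of Theorem \ref{thm Gid N}, the cokernel module $C:=Z^B_{t}$ (with $t=-\inf Y$) carries all the dimension, and one has $^*\Gid_R C = t+g$, i.e. the truncation shifts the problem to a single graded module whose $^*$Gorenstein injective dimension is $g+\inf Y - (-\inf Y)$ normalized appropriately.

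The cleaner route, however, avoids delicate truncation bookkeeping and instead invokes the module-level finitistic bound directly. The plan is to argue:
\begin{align*}
{}^*\Gid_R Y \;\leq\; {}^*\Gid_R C - \inf Y \;\leq\; {}^*\FGID(R) - \inf Y,
\end{align*}
where $C$ is a suitable cosyzygy module extracted from the bounded $^*$Gorenstein injective resolution $B$ of $Y$. Here the first inequality comes from the shift relating the $^*$Gorenstein injective dimension of the complex to that of its top cosyzygy module (reading off the exact sequence $(**)$ in Theorem \ref{thm Gid N}), and the second is just the definition of the finitistic $^*$Gorenstein injective dimension applied to the module $C$, since $0\leq{}^*\Gid_R C<\infty$. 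Then I would apply the preceding theorem, $^*\FGID(R)={}^*\FID(R)$, to replace $^*\FGID(R)$ by $^*\FID(R)$.

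The final step is to pass from $^*\FID(R)$ to $\FID(R)$. By Corollary \ref{ineq} every graded module $M$ with ${}^*\id_RM<\infty$ satisfies $\id_RM\leq{}^*\id_RM+1$; more usefully, the same corollary forces ${}^*\id_RM<\infty$ if and only if $\id_RM<\infty$, and gives ${}^*\id_RM\leq\id_RM$. Taking suprema over all graded modules of finite $^*$injective dimension yields $^*\FID(R)\leq\FID(R)$. Combining the displayed chain with this estimate gives $^*\Gid_RY\leq\FID(R)-\inf Y$, as required.

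I expect the main obstacle to be the first inequality, namely pinning down that the $^*$Gorenstein injective dimension of the complex $Y$ is bounded by (the $^*$Gorenstein injective dimension of) a single cosyzygy module shifted by $-\inf Y$. This requires choosing the truncation correctly so that the relevant cosyzygy is a genuine module with finite positive $^*$Gorenstein injective dimension, which is exactly the content extracted in the proof of Theorem \ref{thm Gid N} (the computation ${}^*\Gid_R Z^B_t = t+g$ there). Once that identification is in hand, everything else is a short chain of inequalities invoking results already established.
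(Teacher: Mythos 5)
Your proposal is correct and follows essentially the same route as the paper: reduce to the cosyzygy module $Z^B_t$ via the sequence $(**)$ in the proof of Theorem \ref{thm Gid N}, bound its $^*$Gorenstein injective dimension by $^*\FGID(R)={}^*\FID(R)$, and pass to $\FID(R)$ via Corollary \ref{ineq}. The only blemish is a sign slip (the paper has $t=\inf Y$, not $t=-\inf Y$, with $^*\Gid_RZ^B_t=t+g$), which does not affect the displayed chain of inequalities.
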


\begin{proof} Note that by $(**)$ in the proof of Theorem \ref{thm Gid N} above
we have $\!^*\Gid_RY=\!^*\Gid_RM-\inf Y$ for some graded $R$-module
$M$. Thus $\!^*\Gid_RY\leq\!^*\mbox{FGID}(R)-\inf Y=\!^*\FID(R)-\inf
Y\leq\FID(R)-\inf Y$.
\end{proof}

Let $D$ be a $\!^*$dualizing complex of $R$. Then $D$ is a dualizing
complex of $R$, so we have the Bass category $B(R)$ with respect to
$D$ (cf. \cite[Page 237]{CFH}). It is known that for
$Y\in\mathcal{D}_{\sqsubset}(R)$, we have $Y\in $ $B(R)$ if and only
if $\Gid_{R}Y<\infty$, \cite[Theorem 4.4]{CFH}.

\begin{defn} Let $D$ be a $\!^*$dualizing complex
of $R$. The $^*$Bass category $^{*}B(R)$ with respect to $D$ is
define as:
$$^{*}B(R):=\left\{Y\in
^*\mathcal{D}_{\square}(R)\bigg|\begin{array}{l}\epsilon_{Y}:D\otimes_{R}^{\mathbf{L}}
\mathbf{R}^{*}\!\Hom_{R}(D, Y)\to Y \text{ is an iso-}
\\\text{morphism and } \mathbf{R}^{*}\Hom_{R}(D, Y)\in\!^*\mathcal{D}_{\square}(R)
\end{array} \right\}.$$
\end{defn}

It is easily seen that  a complex $Y\in ^*\mathcal{D}_{\square}(R)$
is in $B(R)$ if and only if is in $\!^*B(R)$.

\begin{thm}\label{B(R)}
Assume that $R$ admits a $\!^*$dualizing complex $D$. Then For
$Y\in$ $^{*}\mathcal{D}_{\sqsubset}(R)$ the following are
equivalent:
\begin{itemize}
\item[(1)] $Y\in\!^{*}B(R)$.
\item[(2)] $^{*}\Gid_{R}Y<\infty$.
\end{itemize}
\end{thm}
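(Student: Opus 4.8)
The plan is to prove the equivalence of the two conditions by exploiting the fact that, since $D$ is a $\!^*$dualizing complex, it is simultaneously an ordinary dualizing complex (by the corollary following the definition of $\!^*$dualizing complex), and that the $\!^*$Bass category and the ordinary Bass category coincide on $\!^*\mathcal{D}_{\square}(R)$ (as remarked after the definition of $\!^*B(R)$). My strategy is to mimic the proof of the ungraded statement \cite[Theorem 4.4]{CFH}, transporting each ingredient to the graded setting using the graded machinery already established in the excerpt.

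For the direction $(1)\Rightarrow(2)$, I would first use the characterization of $\!^*\Gid$ from Theorem \ref{thm Gid N}, specifically the formula $^*\Gid_{R}Y=\sup\{-\inf \mathbf{R}^{*}\!\Hom_{R}(J,Y)\mid J \text{ is } ^{*}\text{injective}\}$. The aim is to bound $-\inf \mathbf{R}^{*}\!\Hom_{R}(J,Y)$ uniformly over all $\!^*$injective modules $J$. Assuming $Y\in\!^*B(R)$, I would write $Y\simeq D\otimes_R^{\mathbf{L}}\mathbf{R}^{*}\!\Hom_R(D,Y)$ and set $X:=\mathbf{R}^{*}\!\Hom_R(D,Y)$, which lies in $\!^*\mathcal{D}_{\square}(R)$ by hypothesis. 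Then I would apply $\!^*$adjointness to rewrite $\mathbf{R}^{*}\!\Hom_R(J,Y)\simeq \mathbf{R}^{*}\!\Hom_R(J,D\otimes_R^{\mathbf{L}}X)$ and, invoking the $\!^*$Hom-evaluation or $\!^*$tensor-evaluation morphism from the Proposition on homomorphism and tensor functors, massage this into an expression involving $\!^*\Hom_R(D,J)$, whose homology is controlled because $\!^*\id_R D<\infty$. The key numerical input is that $\!^*\id_R D$ together with the amplitude of $X$ gives a finite uniform bound on $-\inf \mathbf{R}^{*}\!\Hom_R(J,Y)$, whence $^*\Gid_R Y<\infty$.

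For the direction $(2)\Rightarrow(1)$, I would start from a bounded resolution $Y\simeq B$ by $\!^*$Gorenstein injective modules, which exists because $^*\Gid_R Y<\infty$. Using Corollary \ref{2.12} I can compute $\mathbf{R}^{*}\!\Hom_R(D,Y)$ via $^*\Hom_R(V,B)$, where $V\simeq D$ is a suitable bounded complex of modules of finite $\!^*$injective dimension (such a $V$ exists since $\!^*\id_R D<\infty$); this shows $\mathbf{R}^{*}\!\Hom_R(D,Y)\in\!^*\mathcal{D}_{\square}(R)$, giving one of the two defining conditions of $\!^*B(R)$. For the remaining condition, that the natural morphism $\epsilon_Y$ is an isomorphism, I would localize at the homogeneous maximal ideal $\fm$ (or argue via the equivalence of $\!^*B(R)$ and $B(R)$ on $\!^*\mathcal{D}_{\square}(R)$): since $D_{\fm}$ is a dualizing complex for $R_{\fm}$ and localization is faithfully exact on graded modules by \cite[Proposition 1.5.15(c)]{BH}, the invertibility of $\epsilon_Y$ can be checked after localization, where it follows from the ungraded theory of the Bass class.

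The main obstacle I anticipate is the second direction, specifically verifying that $\epsilon_Y$ is an isomorphism in $\!^*\mathcal{D}(R)$ rather than merely that $\mathbf{R}^{*}\!\Hom_R(D,Y)$ is bounded. The cleanest route is to reduce to the already-established equivalence between membership in $\!^*B(R)$ and in $B(R)$ for complexes in $\!^*\mathcal{D}_{\square}(R)$, combined with the ungraded result \cite[Theorem 4.4]{CFH} applied to $Y$ viewed as an ordinary complex; but I must first confirm that $Y\in\!^*\mathcal{D}_{\square}(R)$, i.e. that $Y$ is homologically bounded below as well as above, which follows because finite $^*\Gid_R Y$ forces $Y$ to be isomorphic to a bounded complex $B$. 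Care will be needed to ensure all the evaluation and adjointness morphisms of the Proposition are invertible under the stated finiteness and boundedness hypotheses, and that the passage between the graded and ungraded derived functors (justified by $\mathbf{R}\,^*\Hom_R(X,Y)=\uhom_R(X,Y)$ for homologically finite $X$) is applied only where legitimate.
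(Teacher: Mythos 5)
There is a genuine gap in your direction $(1)\Rightarrow(2)$: it is circular. The formula
$^*\Gid_{R}Y=\sup\{-\inf \mathbf{R}^{*}\!\Hom_{R}(J,Y)\mid J \text{ is } {}^{*}\text{injective}\}$
comes from Theorem \ref{thm Gid N}, whose hypothesis is that $Y$ \emph{already has finite} $^*$Gorenstein injective dimension; you cannot use it to establish that finiteness. Moreover, even granting a uniform bound on $-\inf\mathbf{R}^{*}\!\Hom_R(J,Y)$, such a bound does not by itself produce a bounded complex of $^*$Gorenstein injective modules isomorphic to $Y$: to show that a sufficiently high cosyzygy $N$ in a $^*$injective resolution of $Y$ is $^*$Gorenstein injective one must construct the \emph{left half} of a complete $^*$injective resolution of $N$ (Definition \ref{defn Gid} requires $N\cong\Ker(I^0\to I^1)$ inside a totally acyclic complex, so in particular $N$ must be a homomorphic image of a $^*$injective with controlled kernel, iterated to the left). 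This construction is exactly what the Bass-class isomorphism $\epsilon_N$ is needed for, and it is the content of the key lemma the paper's proof singles out (the graded analogue of \cite[Lemma 4.6]{CFH}: a graded module $N\in{}^*B(R)$ with $^*\Ext^{m}_R(J,N)=0$ for all $m>0$ and all $^*$injective $J$ is $^*$Gorenstein injective). Your proposal contains no substitute for this step; the adjointness/evaluation manipulations only control homological degrees, not the existence of the acyclic left tail.

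Your direction $(2)\Rightarrow(1)$ also has problems as stated. First, $R$ is an arbitrary Noetherian $\mathbb{Z}$-graded ring here, not assumed $^*$local, so "localize at the homogeneous maximal ideal $\fm$" is not available, and in any case checking invertibility of $\epsilon_Y$ after one localization would not suffice. Second, your fallback --- pass to the ungraded Bass class via the remark that $^*B(R)=B(R)$ on $^*\mathcal{D}_{\square}(R)$ and quote \cite[Theorem 4.4]{CFH} --- requires knowing $\Gid_RY<\infty$ from $^*\Gid_RY<\infty$; but in this paper that implication is Corollary \ref{ingid}(a), which is itself deduced from Theorem \ref{B(R)}. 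The non-circular route is the paper's: dualize \cite[Theorem 4.1]{CFH} directly in the graded category, showing first that every $^*$Gorenstein injective module lies in $^*B(R)$ (using that $^*$injectives have finite injective dimension and the total acyclicity of the defining complex) and then that $^*B(R)$ is closed under the operations needed to pass from modules to bounded complexes.
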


\begin{proof} It is dual to the proof of \cite[Theorem 4.1]{CFH}. In fact the proof uses that if $N$ is a
graded $R$-module satisfying both $N \in$ $^{*}B(R)$ and
$^*\Ext^{m}_{R}(J,N)=0$ for all integer $m>0$ and all
$^{*}$injective $R$-module $J$, then $N$ is $^{*}$Gorenstein
injective, which its proof is dual to \cite[Lemma 4.6]{CFH}.
\end{proof}

\begin{thm}\label{thm width&depyh}
Let $(R,\fm,k)$ be a $^*$local ring that admits a $^*$dualizing
complex $D$. For a complex $Y\in$ $^*\mathcal{D}_{\square}(R)$ of
finite $^*$Gorenstein injective dimension, we have
$$\width(\fm,Y)=\depth(\fm,R)+ \inf \textbf{R}^*\Hom_{R}(\!^*\E_R(k),Y).$$
\end{thm}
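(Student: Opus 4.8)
The plan is to reduce the asserted identity to the graded width formula of Proposition~\ref{Y} and then to a single statement about the bottom homology of $\mathbf{R}\!^*\Hom_R(\!^*\E_R(k),Y)$. Throughout set $E:=\!^*\E_R(k)$ and $W:=\mathbf{R}\!^*\Hom_R(E,Y)$. First I would check that $W\in\,^*\mathcal{D}_\square(R)$, so that all the invariants below are meaningful: it is bounded above since $\sup W\le\sup Y$, and bounded below since $\!^*\Gid_RY<\infty$ forces $\inf W\ge-\,^*\Gid_RY>-\infty$ by Theorem~\ref{thm Gid N}(3). (Equivalently, one may place $Y$ in the $^*$Bass category via Theorem~\ref{B(R)}.)

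The main computation is to feed $X=E$ into Proposition~\ref{Y}, which is legitimate because $E\in\,^*\mathcal{C}_\square(R)$ and $Y\in\,^*\mathcal{D}_\square(R)$. This yields
\begin{equation*}
\width(\fm,W)=\depth(\fm,E)+\width(\fm,Y)-\depth(\fm,R).
\end{equation*}
Next I would compute $\depth(\fm,E)=0$: since $E$ is $^*$injective, $\mathbf{R}\!^*\Hom_R(k,E)=\,^*\Hom_R(k,E)=(0:_E\fm)$, the one–dimensional socle $k$ in homological degree $0$, so $-\sup\mathbf{R}\!^*\Hom_R(k,E)=0$. Substituting gives the intermediate identity
\begin{equation*}
\width(\fm,Y)=\depth(\fm,R)+\width(\fm,W).
\end{equation*}
Comparing with the target, it remains only to prove that $\width(\fm,W)=\inf W$.

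To establish this I would show that the bottom homology module survives reduction modulo $\fm$. Writing $s:=\inf W$ and $M:=\H_s(W)$, the lowest nonvanishing homology of $k\utp_RW$ lies in degree $s$ and equals $k\otimes_RM=M/\fm M$; hence $\width(\fm,W)=\inf(k\utp_RW)\ge\inf W$ always, with equality exactly when $M\neq\fm M$. The structural input is that $M$ is $\fm$-adically separated: representing $W$ by $\,^*\Hom_R(E,I)$ for a $^*$injective resolution $I$ of $Y$ and using $E=\bigcup_n(0:_E\fm^n)$, each term $\,^*\Hom_R(E,I_j)$ is the inverse limit over $n$ of the modules $\,^*\Hom_R((0:_E\fm^n),I_j)$, every one of which is annihilated by a power of $\fm$; such terms are $\fm$-adically complete and separated. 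Granting that the bottom homology $M$ inherits separatedness, a nonzero separated module cannot satisfy $M=\fm M$ (else $M=\fm^nM$ for all $n$, whence $M\subseteq\bigcap_n\fm^nM=0$). Thus $M/\fm M\neq0$ and $\width(\fm,W)=\inf W$, completing the proof.

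I expect this last step to be the crux: transferring $\fm$-adic separatedness from the terms $\,^*\Hom_R(E,I_j)$ to the subquotient $M=\H_s(W)$ amounts to a closedness-of-image statement, and this is precisely where the finiteness of $\,^*\Gid_RY$ (equivalently, membership of $Y$ in the $^*$Bass category) must be invoked. An alternative route to $\width(\fm,W)=\inf W$ is to decompose $Y\simeq D\utp_R\mathbf{R}\!^*\Hom_R(D,Y)$ via Theorem~\ref{B(R)}, exploit additivity of width along $\utp_R$ together with $\width(\fm,D)=\depth(\fm,R)$ (which follows from $\mathbf{R}\Gamma_{\fm}(D)\simeq E$ of Proposition~\ref{e} and Proposition~\ref{matlis}), and identify $\width(\fm,\mathbf{R}\!^*\Hom_R(D,Y))$ with $\inf W$. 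I would keep the Proposition~\ref{Y} argument as the primary line and fall back on this decomposition should the separatedness issue prove stubborn.
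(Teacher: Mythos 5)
Your reduction is clean and correct as far as it goes: applying Proposition~\ref{Y} with $X=E:=\!^*\E_R(k)$ and using $\depth(\fm,E)=-\sup\!^*\Hom_R(k,E)=0$ does reduce the theorem to the single claim $\width(\fm,W)=\inf W$ for $W=\mathbf{R}\!^*\Hom_R(E,Y)$ (with finiteness of $^*\Gid_RY$ guaranteeing $\inf W>-\infty$ via Theorem~\ref{thm Gid N}). This is a genuinely different, and in some ways tidier, organization than the paper's. But the proof is not complete: the claim $\width(\fm,W)=\inf W$ is exactly the nontrivial content, and you leave it at ``granting that the bottom homology $M$ inherits separatedness.'' Moreover, the tool you reach for would not close the gap even if pursued: classical $\fm$-adic separatedness does not pass from the terms of a complex to its homology, since a quotient of a separated module by a non-closed submodule is not separated, and there is no reason for the relevant boundary module to be closed in $\Ker\partial_s$. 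So the ``crux'' you flag is a real gap, not a routine verification.

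What actually makes $\width(\fm,W)=\inf W$ true is derived completeness rather than classical separatedness: since $E$ is $\fm$-torsion, $W=\mathbf{R}\!^*\Hom_R(E,Y)$ is derived $\fm$-complete, and Frankild's theorem that $\width(\fm,-)=\inf\textbf{L}\Lambda^{\fm}(-)$ \cite[Theorem 2.11]{Fr} together with the adjunction $\textbf{L}\Lambda^{\fm}\mathbf{R}\Hom_R(-,Y)\simeq\mathbf{R}\Hom_R({\mathbf R}\Gamma_{\fm}(-),Y)$ \cite[2.6]{Fr} then gives the equality of width and infimum. This is precisely how the paper argues, though routed through $D$ rather than $E$: it decomposes $Y\simeq D\utp_R\mathbf{R}\!^*\Hom_R(D,Y)$ using $Y\in\!^*B(R)$ (Theorem~\ref{B(R)}), applies additivity of width, computes $\width(\fm,D)=\inf D_{\fm}=\depth R_{\fm}$ by Nakayama for the homologically finite complex $D$, and then evaluates $\width(\fm,\mathbf{R}\!^*\Hom_R(D,Y))=\inf\textbf{L}\Lambda^{\fm}\mathbf{R}\!^*\Hom_R(D,Y)=\inf\mathbf{R}\!^*\Hom_R({\mathbf R}\Gamma_{\fm}(D),Y)=\inf\mathbf{R}\!^*\Hom_R(E,Y)$ via Proposition~\ref{e}. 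Your ``fallback'' paragraph is essentially this proof, but the step you describe as ``identify $\width(\fm,\mathbf{R}\!^*\Hom_R(D,Y))$ with $\inf W$'' is again exactly the local homology argument above, which you do not supply. To repair your primary line, replace the separatedness discussion with: $W$ is derived $\fm$-complete because $E\simeq{\mathbf R}\Gamma_{\fm}(E)$, hence $\width(\fm,W)=\inf\textbf{L}\Lambda^{\fm}W=\inf W$ by \cite[Theorem 2.11 and 2.6]{Fr}.
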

\begin{proof}
We follow the method of \cite[Theorem 6.5]{CFH}. By Theorem
\ref{B(R)}, $Y\in \!^*B(R)$; in particular $Y\simeq
D\utp_R\textbf{R}^*\Hom_{R}(D,Y)$. Furthermore, we can assume that
$D$ is a normalized $^*$dualizing complex, so that by Proposition
\ref{e} we have ${\mathbf R}\Gamma_{\fm}(D)\cong\!^*\E_R(k)$ and
that $D_{\fm}$ is a normalized dualizing complex for $R_{\fm}$. We
compute as follows:
\begin{align*}
\width(\fm,Y)=& \width(\fm,D\utp_R\textbf{R}^*\Hom_{R}(D,Y)) \\[1ex]
= & \width(\fm,D)+\width(\fm,\textbf{R}^*\Hom_{R}(D,Y)) \\[1ex]
= & \inf D_{\fm}+\inf\textbf{L}\Lambda^{\fm}\mathbf{R}\!^*\Hom_R(D,Y) \\[1ex]
= & \depth R_{\fm}+\inf\textbf{R}^*\Hom_{R}({\mathbf R}\Gamma_{\fm}(D),Y)\\[1ex]
= & \depth(\fm,R)+\inf\textbf{R}^*\Hom_{R}(\!^*\E_R(k),Y).
\end{align*}
The second equality is by \cite[Theorem 2.4(b)]{Y}, the third one by
the fact that $D_{\fm}$ is homologically finite and by \cite[Theorem
2.11]{Fr}, and the forth one by \cite[2.6]{Fr}, and the fact that
$\textbf{R}^*\Hom_{R}(D,Y)=\uhom_{R}(D,Y)$.
\end{proof}

The ungraded version of the following result is in \cite[Proposition
5.5]{CFH}.

\begin{prop} \label{Gid_R_(p)}
Assume that $R$ admits a $\!^*$dualizing complex and let $Y\in$
$^*\mathcal{D}_{\square}(R)$. Then for any homogeneous prime ideal
$\fp\in R$ there is an inequality
$$^*\Gid _{R_{(\fp)}}Y_{(\fp)}\leq ^*\Gid_{R}Y.$$
\end{prop}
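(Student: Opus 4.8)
The plan is to follow the localization strategy used in the ungraded case \cite[Proposition 5.5]{CFH}, reducing the inequality $^*\Gid_{R_{(\fp)}}Y_{(\fp)}\leq\,^*\Gid_RY$ to a characterization of $^*$Gorenstein injective dimension that localizes well. We may assume $^*\Gid_RY=:n$ is finite, for otherwise there is nothing to prove. By Theorem \ref{thm Gid N}, finiteness of $^*\Gid_RY$ together with the existence of a $^*$dualizing complex $D$ means (via Theorem \ref{B(R)}) that $Y\in\,^*B(R)$, and the dimension is computed by
\begin{equation*}
^*\Gid_RY=\sup\{-\inf\mathbf{R}^*\Hom_R(J,Y)\mid J\text{ is }^*\text{injective}\}.
\end{equation*}
The first step is therefore to verify that $Y_{(\fp)}$ again has finite $^*$Gorenstein injective dimension over $R_{(\fp)}$, so that the same formula applies at the localized level; this should follow by localizing the bounded complex of $^*$Gorenstein injective modules resolving $Y$, once we check that homogeneous localization preserves $^*$Gorenstein injectivity.

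The key technical point will be to relate $^*$injective $R_{(\fp)}$-modules and the behaviour of $\mathbf{R}^*\Hom$ under homogeneous localization. Concretely, I would show that for a $^*$injective $R$-module $J$ its homogeneous localization $J_{(\fp)}$ is $^*$injective over $R_{(\fp)}$, and conversely that every $^*$injective $R_{(\fp)}$-module arises (up to the relevant homological data) from a $^*$injective $R$-module; the graded structure theory of injectives together with \cite[Corollary 4.3]{FF} is the right tool here. Granting this, for any $^*$injective $R_{(\fp)}$-module $J'$ one obtains
\begin{equation*}
\mathbf{R}^*\Hom_{R_{(\fp)}}(J',Y_{(\fp)})\simeq\big(\mathbf{R}^*\Hom_R(J,Y)\big)_{(\fp)}
\end{equation*}
for a suitable $^*$injective $R$-module $J$, using that $\mathbf{R}^*\Hom_R(D,-)$ and homogeneous localization are compatible when $D$ (hence the relevant module) is homologically finite, exactly as noted in the proof of Theorem \ref{thm width&depyh}.

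With this compatibility in hand, the inequality drops out by comparing infima: since homogeneous localization is exact and can only kill homology, one has $\inf\big(\mathbf{R}^*\Hom_R(J,Y)\big)_{(\fp)}\geq\inf\mathbf{R}^*\Hom_R(J,Y)$, whence $-\inf\mathbf{R}^*\Hom_{R_{(\fp)}}(J',Y_{(\fp)})\leq -\inf\mathbf{R}^*\Hom_R(J,Y)\leq\,^*\Gid_RY$. Taking the supremum over all $^*$injective $R_{(\fp)}$-modules $J'$ and invoking the computational formula of Theorem \ref{thm Gid N} for $Y_{(\fp)}$ yields $^*\Gid_{R_{(\fp)}}Y_{(\fp)}\leq\,^*\Gid_RY$, as desired.

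The main obstacle I anticipate is the passage between $^*$injective modules over $R$ and over $R_{(\fp)}$: one must ensure that the supremum defining $^*\Gid_{R_{(\fp)}}Y_{(\fp)}$, taken over $R_{(\fp)}$-injectives, is controlled by the supremum over $R$-injectives used to compute $^*\Gid_RY$. This requires a genuine statement that every $^*$injective $R_{(\fp)}$-module is (a summand of) the homogeneous localization of a $^*$injective $R$-module, or at least that no new nonvanishing Ext classes appear after localizing. Once that structural fact about graded injectives under homogeneous localization is pinned down, the rest is a formal comparison of infima and poses no real difficulty.
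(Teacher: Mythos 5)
Your proposal has a genuine gap, and in fact it circles around the one step that carries all the weight. In your first step you say that finiteness of $^*\Gid_{R_{(\fp)}}Y_{(\fp)}$ should follow by localizing the bounded complex of $^*$Gorenstein injective modules resolving $Y$, ``once we check that homogeneous localization preserves $^*$Gorenstein injectivity.'' But that check \emph{is} the proposition: if $Y\simeq B$ with $B$ a bounded complex of $^*$Gorenstein injectives satisfying $B_{\ell}=0$ for $\ell<-n$, where $n={}^*\Gid_RY$, and if each $(B_{\ell})_{(\fp)}$ is $^*$Gorenstein injective over $R_{(\fp)}$, then $Y_{(\fp)}\simeq B_{(\fp)}$ already gives $^*\Gid_{R_{(\fp)}}Y_{(\fp)}\leq n$ directly from the definition of $^*\Gid$. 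All of your subsequent machinery (matching $^*$injectives over $R$ with those over $R_{(\fp)}$, comparing infima of $\mathbf{R}^*\Hom$) is then redundant; and conversely, without that check your argument has no starting point. This reduction is exactly what the paper does, and the nontrivial content --- that $N$ $^*$Gorenstein injective implies $N_{(\fp)}$ $^*$Gorenstein injective over $R_{(\fp)}$ --- is proved following \cite[Theorem 6.2.13]{C}, where the essential input is the finiteness of the finitistic ($^*$Gorenstein) injective dimension (Corollary \ref{FID}): one needs it to control the left half of the localized complete $^*$injective resolution, i.e.\ to see that the cokernels there remain $^*$Gorenstein injective so that $^*\Hom_{R_{(\fp)}}(E,-)$ stays exact on the localized resolution. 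Your proposal never engages with this point.

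Separately, the isomorphism $\mathbf{R}^*\Hom_{R_{(\fp)}}(J',Y_{(\fp)})\simeq\bigl(\mathbf{R}^*\Hom_R(J,Y)\bigr)_{(\fp)}$ on which your comparison of infima rests is unjustified: a $^*$injective module $J$ is essentially never finitely generated, so $\mathbf{R}^*\Hom_R(J,-)$ does not commute with (homogeneous) localization, and the appeal to the proof of Theorem \ref{thm width&depyh} does not apply, since there the first argument is the homologically finite complex $D$. So even granting the structural facts about graded injectives under localization that you flag as the ``main obstacle,'' the formal comparison at the end does not go through as written.
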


\begin{proof}
It is enough to show that if $N$ is a $\!^*$Gorenstein injective,
then $N_{(\fp)}$ is $\!^*$Gorenstein injective over $R_{(\fp)}$.
This is similar to the proof of \cite [Theorem 6.2.13]{C} using
Corollary \ref{FID}.
\end{proof}

The following proposition is the graded version of \cite[Lemma
2.1]{CS}. For part $(b)$ we follow the technique of \cite[Lemma
2.1]{CS}. We present the proof to give some hints for the graded
analogues. Before doing that we need a lemma.

\begin{lem}\label{comp}
Let $(R,\fm)$ be a $^*$local non-negatively graded ring. Then the
$\fm$-$^*$adic completion $^*\widehat{R}$ of $R$, is a
$^*$faithfully flat $R$-module, that is $^*\widehat{R}$ is $R$-flat
and for any graded $R$-module $M$, $M=0$ if and only if
$M\otimes_R\,^*\widehat{R}= 0$.
\end{lem}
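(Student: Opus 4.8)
The plan is to prove Lemma \ref{comp} by reducing the graded (faithful) flatness statement to the already-known ungraded flatness of the $^*$adic completion, together with the $^*$local structure encoded in the non-negative grading. First I would recall that by the discussion preceding the lemma (the citation \cite[Corollary 3.3]{FF}), the $\fm$-$^*$adic completion $^*\widehat{R}$ is automatically a flat $R$-module; this settles the flatness half of the statement with no extra work, so the entire content of the lemma lies in the faithfulness assertion. Thus the task reduces to showing that for a graded $R$-module $M$, the vanishing $M\otimes_R\,^*\widehat{R}=0$ forces $M=0$.

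The key step is to exploit that $R$ is $^*$local with unique homogeneous maximal ideal $\fm$ and is non-negatively graded, which guarantees that $\fm$ is the irrelevant-type maximal ideal and that the filtration $\{\fm^n\}$ is $^*$separated on finitely generated graded modules. The approach I would take is the standard faithful-flatness criterion: a flat module $F$ is faithfully flat precisely when $M\otimes_R F=0$ implies $M=0$, and for this it suffices to check that $(R/\fm)\otimes_R\,^*\widehat{R}\neq 0$, equivalently that $\fm\,^*\widehat{R}\neq\,^*\widehat{R}$. Since $^*\widehat{R}=\!^*\lim_{\longleftarrow}R/\fm^n$, one computes directly that $^*\widehat{R}/\fm\,^*\widehat{R}\cong R/\fm=k$, which is nonzero. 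I would then argue that because $F=\,^*\widehat{R}$ is flat, the functor $M\mapsto M\otimes_R F$ being exact and nonzero on $k$ propagates faithfulness to all graded modules via the homogeneous maximal-ideal structure: any nonzero graded $M$ has a homogeneous element whose annihilator is a homogeneous ideal contained in $\fm$, giving a nonzero map $k(-\alpha)\to M$ or a quotient surjecting onto $k$, and tensoring with the flat module $F$ preserves the nonvanishing.

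The non-negative grading hypothesis is precisely what I expect to be the main obstacle, and where the argument needs care. In an arbitrary $\mathbb{Z}$-graded $^*$local ring the $\fm$-$^*$adic topology need not be Hausdorff on all modules, so the clean isomorphism $^*\widehat{R}/\fm\,^*\widehat{R}\cong k$ and the separatedness needed to pass from $k$ to general $M$ can fail; the non-negative grading forces every homogeneous maximal ideal to contain the irrelevant ideal $R_{+}$, which makes $\fm$ behave like the maximal ideal of a local ring and restores the needed separatedness. I would therefore isolate the faithfulness verification as the heart of the proof, reduce it to the residue field via the maximal-ideal criterion for faithful flatness of a flat module, and invoke the non-negativity only at the one point where one needs $\fm\,^*\widehat{R}\neq\,^*\widehat{R}$ and the completion to be $^*$separated. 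The flatness, by contrast, requires nothing beyond the cited \cite[Corollary 3.3]{FF}.
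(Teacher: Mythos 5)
Your proposal is correct, but it takes a genuinely different route from the paper. The paper proves faithfulness by reducing everything to the degree-zero subring: it notes that non-negativity gives $\fm=\fm_0\oplus R_1\oplus R_2\oplus\cdots$ and $(^*\widehat{R})_0=\widehat{R_0}$, then for a graded $M$ with $M\otimes_R{}^*\widehat{R}=0$ it examines the degree-$t$ component of $M_{(t)}\otimes_R{}^*\widehat{R}$ for the truncations $M_{(t)}=M_t\oplus M_{t+1}\oplus\cdots$ to deduce $M\otimes_{R_0}\widehat{R_0}=0$, and finally invokes classical faithful flatness of $\widehat{R_0}$ over the local ring $R_0$. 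You instead run the graded local criterion: any nonzero graded $M$ contains a cyclic graded submodule $(R/\fa)(-\alpha)$ with $\fa$ homogeneous, hence $\fa\subseteq\fm$, hence surjecting onto $k(-\alpha)$, so by flatness it suffices to check $k\otimes_R{}^*\widehat{R}\neq0$, which you verify from the inverse-limit description. Both arguments use non-negativity at essentially the same spot (identifying the degree-zero data with the $\fm_0$-adic picture over $R_0$), but yours isolates the single residue-field check while the paper carries the whole module $M$ through the reduction. Two small caveats on your write-up: the full isomorphism ${}^*\widehat{R}/\fm\,{}^*\widehat{R}\cong k$ asserts more than you need and its positive-degree components require extra justification (essentially ${}^*\widehat{R}\cong R\otimes_{R_0}\widehat{R_0}$); all you actually use is $(\fm\,{}^*\widehat{R})_0=\fm_0\widehat{R_0}\neq\widehat{R_0}$, which follows immediately from non-negativity. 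Also, the appeal to separatedness of the $\fm$-${}^*$adic topology is not needed anywhere in your argument, and your phrase ``the standard faithful-flatness criterion'' should be read with care: checking only the homogeneous maximal ideal $\fm$ yields faithfulness only on \emph{graded} modules, which is exactly what the lemma asserts, but not ordinary faithful flatness.
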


\begin{proof} It is well known that $^*\widehat{R}$ is a flat
$R$-module by \cite[Corollary 3.3]{FF}. Now let $M$ be a graded
$R$-module such that $M\otimes_R\,^*\widehat{R}=0$. Note that
$\fm=\fm_0\oplus R_1\oplus R_2\oplus\cdots$, where $\fm_0$ is the
unique maximal ideal of $R_0$. So that
$(^*\widehat{R})_0=\displaystyle\lim_{\longleftarrow}(R/\fm^n)_0=\displaystyle\lim_{\longleftarrow}R_0/\fm_0^n=\widehat{R_0}$,
where $\widehat{R_0}$ is the $\fm_0$-adic completion of $R_0$. Let
$t$ be an integer and set $M_{(t)}:=M_t\oplus M_{t+1}\oplus
M_{t+2}\oplus\cdots$, which is a graded submodule of $M$. Hence
$M_{(t)}\otimes_R\,^*\widehat{R}= 0$. Therefore
$(M_{(t)}\otimes_R\,^*\widehat{R})_t=0$. Since
$(M_{(t)}\otimes_R\,^*\widehat{R})_t$ is generated as a
$\mathbb{Z}$-module by $x\otimes r$ for $x\in M_t$ and
$r\in(^*\widehat{R})_0=\widehat{R_0}$ and $t$ is arbitrary, we see
that $M\otimes_R\widehat{R_0}=0$. On the other hand we have
$R_0\otimes_R\widehat{R_0}=R_0\otimes_R(R_0\otimes_{R_0}\widehat{R_0})=(R_0\otimes_RR_0)\otimes_{R_0}\widehat{R_0}=
(R_0\otimes_{R_0}R_0)\otimes_{R_0}\widehat{R_0}=R_0\otimes_{R_0}\widehat{R_0}=\widehat{R_0}$.
Hence
$0=M\otimes_R\widehat{R_0}=(M\otimes_{R_0}R_0)\otimes_R\widehat{R_0}=M\otimes_{R_0}(R_0\otimes_R\widehat{R_0})
=M\otimes_{R_0}\widehat{R_0}$. Since $\widehat{R_0}$ is a faithfully
flat $R_0$-module, we get that $M=0$. This completes the proof.
\end{proof}

\begin{prop}\label{lem depth&widh}
Let $N$ be a $^*$Gorenstein injective $R$-module. Then under each of
the following conditions
\begin{itemize}
\item[(a)] $R$ admits a $^*$dualizing complex; or
\item[(b)] $R$ is a non-negatively graded ring,
\end{itemize}
one has
$$
\depth R_{\fp}\leqslant \width _{R_{\fp}} N_{\fp}
$$
for every $\fp$ in $^*\Spec R$, and equality holds if $\fp$ is a
maximal element in $^*\supp_RN$.
\end{prop}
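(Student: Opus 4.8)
The plan is to fix $\fp\in\,^*\!\Spec(R)$ and localize homogeneously, reducing everything to the $^*$local ring $(A,\fm'):=(R_{(\fp)},\fp R_{(\fp)})$ and the module $M:=N_{(\fp)}$. Since the homogeneous elements outside $\fp$ become exactly the elements outside $\fm'$, one has $R_\fp=A_{\fm'}$ and $N_\fp=M_{\fm'}$, so the computation preceding Proposition \ref{matlis} gives $\depth R_\fp=\depth(\fm',A)$ and $\width_{R_\fp}N_\fp=\width(\fm',M)$. As $\fm'$ is homogeneous, $\fm'\in\,^*\!\supp_A M$ iff $\width(\fm',M)<\infty$ iff $\fp\in\,^*\!\supp_R N$; in particular a maximal element $\fp$ of $^*\supp_RN$ yields $\fm'\in\,^*\!\supp_A M$. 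It therefore suffices to prove: \emph{for a $^*$local ring $(A,\fm',k)$ and a $^*$Gorenstein injective $A$-module $M$ one has $\depth(\fm',A)\le\width(\fm',M)$, with equality exactly when $\fm'\in\,^*\!\supp_A M$.}

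Under hypothesis (a), the localization $A=R_{(\fp)}$ inherits a $^*$dualizing complex $D_{(\fp)}$ and $M$ is $^*$Gorenstein injective over $A$ by Proposition \ref{Gid_R_(p)}, so Theorem \ref{thm width&depyh} applies:
$$\width(\fm',M)=\depth(\fm',A)+\inf\mathbf{R}\,^*\Hom_A(\!^*\E_A(k),M).$$
The key observation is that $\mathbf{R}\,^*\Hom_A(\!^*\E_A(k),M)$ has homology concentrated in degree $0$: its negative part vanishes as a right-derived functor, and $^*\Ext^i_A(\!^*\E_A(k),M)=0$ for $i>0$ because $M$ is $^*$Gorenstein injective and $\!^*\E_A(k)$ is $^*$injective (Proposition \ref{thm Gid}). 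Hence it is isomorphic in $^*\mathcal{D}(A)$ to the module $^*\Hom_A(\!^*\E_A(k),M)$ in degree $0$, whose infimum is $0$ if this module is nonzero and $+\infty$ otherwise. Thus $\width(\fm',M)\in\{\depth(\fm',A),+\infty\}$, which already gives the asserted inequality; and since $\width(\fm',M)<\infty$ is equivalent to $\fm'\in\,^*\!\supp_A M$, the two numbers coincide precisely on the support, in particular at maximal elements of $^*\supp_RN$.

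For hypothesis (b) the plan is to reduce to case (a) through the $\fm'$-$^*$adic completion. If $\fp$ contains the irrelevant ideal, then $A=R_{(\fp)}$ is again non-negatively graded, so Lemma \ref{comp} makes $\!^*\widehat{A}$ a $^*$faithfully flat $A$-algebra; as $\!^*\widehat{A}$ is a finitely generated graded algebra over the complete local ring $\widehat{A_0}$, it admits a $^*$dualizing complex. One then checks that $M\otimes_A\!^*\widehat{A}$ is $^*$Gorenstein injective over $\!^*\widehat{A}$, flatness preserving the acyclicity of a complete $^*$injective resolution and completion sending $^*$injectives to $^*$injectives, while $\depth$, $\width$ and membership in the small support are unaffected by the $^*$faithfully flat base change; applying case (a) over $\!^*\widehat{A}$ and descending settles this subcase. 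If $\fp$ does not contain the irrelevant ideal, then $A\cong A_0[t,t^{-1}]$ with $A_0$ local, graded $A$-modules are equivalent to $A_0$-modules, and the claim becomes the ungraded \cite[Lemma 2.1]{CS} for the Gorenstein injective $A_0$-module $M_0$.

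The clean part is case (a): once Theorem \ref{thm width&depyh} is available, the dichotomy ``$\width(\fm',M)$ equals $\depth(\fm',A)$ or $+\infty$'' drops out of the degree-$0$ concentration of $\mathbf{R}\,^*\Hom_A(\!^*\E_A(k),M)$, and it produces both the inequality and the equality at once. The main obstacle is case (b): one must verify that $^*$Gorenstein injectivity genuinely survives the $^*$adic completion (i.e.\ that base-changing a complete $^*$injective resolution again yields one), and one must separate the two grading behaviours, since homogeneous localization destroys non-negativity precisely when $\fp$ omits the irrelevant ideal, forcing the Laurent reduction to the ungraded lemma.
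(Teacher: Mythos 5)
Your part (a) is essentially the paper's own argument: localize homogeneously, invoke Proposition \ref{Gid_R_(p)} and Theorem \ref{thm width&depyh}, and observe that $\mathbf{R}\,^*\Hom_S(\!^*\E_S(S/\fn),N_{(\fp)})$ is concentrated in degree $0$ because $N_{(\fp)}$ is $^*$Gorenstein injective, so its infimum is $0$ or $+\infty$. That part is fine.

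Part (b) is where you depart from the paper, and it contains a genuine gap. Your plan hinges on the claim that $M\otimes_A\!^*\widehat{A}$ is $^*$Gorenstein injective over $\!^*\widehat{A}$, justified by ``flatness preserving the acyclicity of a complete $^*$injective resolution and completion sending $^*$injectives to $^*$injectives.'' Acyclicity does survive flat base change, but injectivity does not: for a flat (even faithfully flat local) homomorphism $A\to B$, it is $\Hom_A(B,-)$, not $-\otimes_AB$, that carries injectives to injectives, and in general $E_A(A/\fq)\otimes_AB$ has positive injective dimension over $B$. So the base-changed complex is not a complex of $^*$injectives, the totally-acyclic condition $\Hom(E,\mathbf{I})$ exact is not inherited, and the reduction of the ``irrelevant-ideal'' case to case (a) collapses. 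This is exactly the difficulty that forces the paper (following \cite[Lemma 2.1]{CS}) to avoid base change altogether. You flag this as something ``one must verify,'' but it is the entire content of case (b), and as stated it is not merely unverified but rests on a false auxiliary claim. There are also smaller unproved assertions (that $\!^*\widehat{A}$ admits a $^*$dualizing complex; that the Laurent-ring equivalence for primes not containing the irrelevant ideal transports $^*$Gorenstein injectivity and matches depth and width), which are plausible but not free.

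For comparison, the paper's proof of (b) never claims $N_{(\fp)}$ is $^*$Gorenstein injective over $R_{(\fp)}$ and never base-changes the module. The inequality is obtained directly: only the vanishing $^*\Ext^i_{R_{(\fp)}}(T,N_{(\fp)})=0$ for $T$ of finite $^*$projective dimension is needed, applied to $T=R_{(\fp)}/(\mathbf{x})$ for a maximal homogeneous regular sequence $\mathbf{x}$, together with Proposition \ref{Y}, giving $0\le\width(\fp R_{(\fp)},N_{(\fp)})-\depth(\fp R_{(\fp)},R_{(\fp)})$. The equality at a maximal element of $^*\supp_RN$ is then a graded rerun of the Koszul/Matlis-duality computation of \cite{CS}: using $^*$Hom-evaluation and $^*\Hom_S(E,E)\cong\!^*\widehat{S}$ one shows $\width(\fn,E)+\width(\fn,\!^*\Hom_S(E,B))=\width(\fn,B)$ with the middle width zero, whence $\width(\fn,B)=\width(\fn,E)=\depth(\fn,\!^*\widehat{S})=\depth(\fn,S)$ via Proposition \ref{matlis} and Lemma \ref{comp}. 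If you want to salvage your route, you would need an independent proof that $^*$Gorenstein injectivity ascends along $A\to\!^*\widehat{A}$, which is a nontrivial theorem in its own right.
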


\begin{proof} For part $(a)$ let $\fp$ be a homogenous prime ideal. Consider the $^*$local ring
$(S,\fn):=(R_{(\fp)}, \fp R_{(\fp)})$. By Proposition
\ref{Gid_R_(p)}, $B:=N_{(\fp)}$ is a $^*$Gorenstein injective
$S$-module. Thus
$$^*\Ext^{i}_S(\!^*\E_S(S/\fn),B)=0$$ for all $i>0$, that is
$\inf\textbf{R}^*\Hom_S(\!^*\E_S(S/\fn),B)\geq 0$. On the other hand
since $B_{\fn}\cong N_\fp$ and $S_{\fn}\cong R_\fp$, by Theorem
\ref{thm width&depyh} we have
$$\width N_{\fp}=\depth R_{\fp}+
\inf \textbf{R}^*\Hom_S(\!^*\E_S(S/\fn),B).$$ Thus $\depth
R_{\fp}\leqslant \width _{R_{\fp}} N_{\fp}$.

Now if $\width _{R_{\fp}} N_{\fp}$ is finite, we have
$\inf\textbf{R}^*\Hom_S(\!^*\E_S(S/\fn),B)<\infty$. Using Lemma
\ref{2.12} we have
$$\textbf{R}^*\Hom_S(\!^*\E_S(S/\fn),B)\simeq ^*\Hom_S(\!^*\E_S(S/\fn),B).$$ Therefore the infimum must be zero. This
proves the second statement.

For $(b)$ assume that $\fp$ is a homogeneous prime ideal and $T$ is
a graded $R_{(\fp)}$-module with $\,^*\pd_{R_{(\fp)}}T<\infty$. A
standard dimension shifting argument shows that
$\,^*\Ext^i_{R_{(\fp)}}(T,N_{(\fp)})=0$ for all $i>0$. Set
$d=\depth(\fp R_{(\fp)},R_{(\fp)})$ and choose a homogeneous maximal
$R_{(\fp)}$-regular sequence ${\bf x}$ in $\fp R_{(\fp)}$ by
\cite[Proposition 1.5.11]{BH}. Since the $^*$projective dimension of
$R_{(\fp)}/({\bf x})$ is finite we have
\begin{align*}
0\leq & \inf\mathbf{R}\!^*\Hom_{R_{(\fp)}}(R_{(\fp)}/({\bf
x}),N_{(\fp)})\\ \leq & \width(\fp
R_{(\fp)},\mathbf{R}\!^*\Hom_{R_{(\fp)}}(R_{(\fp)}/({\bf
x}),N_{(\fp)}))\\ = & \width(\fp
R_{(\fp)},N_{(\fp)})-d=\width_{R_{\fp}}N_{\fp}-d,
\end{align*}
where the second inequality holds by \cite[4.3]{CFF}, and the first
equality follows from Proposition \ref{Y}.

Now let $\fp$ be a maximal element in $^*\supp_RN$. Set
$S=R_{(\fp)}$ which is a $^*$local ring with depth $d$, homogeneous
maximal ideal $\fn=\fp R_{(\fp)}$, $B=N_{(\fp)}$, and
$l=R_{(\fp)}/\fp R_{(\fp)}$. One has $\fn\in\supp_SB$ and by exactly
the same method of proof of \cite[Lemma 2.1]{CS} we have
$\,^*\Ext^i_S(T,B)=0=\,^*\Ext^i_S(E,B)$ for all $i>0$ and every
graded $S$-module $T$ with $\,^*\pd_ST$ finite and $E:=\!^*\E_S(l)$.

Let $K$ denote the Koszul complex on a homogeneous system of
generators for $\fn$. Since $\H_i(K\otimes_SE)$ are Artinian, and by
\cite[Corollary 1.6.13]{BH}, we have $\fn \H_i(K\otimes_SE)=0$, we
see that $\H_i(K\otimes_SE)$ are finitely generated. So there is a
resolution $L\stackrel{\simeq}\longrightarrow K\otimes_SE$ by
finitely generated free S-modules. Using the $^*$Hom-evaluation we
have
$$
K\otimes_S(E\utp_S\!^*\Hom_S(E,B))\simeq
L\otimes_S\!^*\Hom_S(E,B)\cong\!^*\Hom_S(\!^*\Hom_S(L,E),B).
$$
The free resolution above induces a quasiisomorphism $\alpha$ from
$\,^*\Hom_S(K\otimes_SE,E)\cong\!^*\Hom_S(K,\!^*\widehat{S})$ to
$\,^*\Hom_S(L,E)$. The mapping cone $C=\mathcal{M}(\alpha)$ is a
bounded complex of direct sums of $^*\widehat{S}$ and $E$. Thus
$\,^*\Ext^i_S(C_j,B)=0$ for all $i>0$ and all $j$. Hence, an
application of $\,^*\Hom_S(-,B)$ yields a quasiisomorphism
$$
\,^*\Hom_S(\alpha,B):\!^*\Hom_S(\!^*\Hom_S(L,E),B)\stackrel{\simeq}\longrightarrow\!^*\Hom_S(\!^*\Hom_S(K,\!^*\widehat{S}),B).
$$
The modules in the complex $\,^*\Hom_S(K,^*\widehat{S})$ are
Ext-orthogonal to the modules in the mapping cone of an injective
resolution $B\stackrel{\simeq}\longrightarrow H$. Therefore, one has
$$
\!^*\Hom_S(\!^*\Hom_S(K,\!^*\widehat{S}),B)\simeq\!^*\Hom_S(\!^*\Hom_S(K,\!^*\widehat{S}),H)
$$
by the graded analogue of \cite[Lemma 2.4]{CFH}. Now piece together
the last three quasiisomorphisms, and use $^*$Hom-evaluation to
obtain
$$
K\otimes_S(E\utp_S\!^*\Hom_S(E,B))\simeq
K\otimes_S\mathbf{R}^*\Hom_S(\!^*\widehat{S},B).
$$
Therefore by \cite[(4.2) and (4.11)]{CFF}, the complexes
$E\utp_S\!^*\Hom_S(E,B)$ and $\mathbf{R}^*\Hom_S(\!^*\widehat{S},B)$
have the same width. From \cite[Theorem 2.4(b)]{Y} and Proposition
\ref{Y} we have
$$
\width(\fn,E)+\width(\fn,\,^*\Hom_S(E,B))=\width(\fn,B).
$$
Now we can see that $\width(\fn,\,^*\Hom_S(E,B))=0$ (see proof of
\cite[Lemma 2.1]{CS}). Consequently
$\width_{R_{\fp}}N_{\fp}=\width(\fn,B)=\width(\fn,E)=\depth(\fn,\!^*\widehat{S})$
using Proposition \ref{matlis}. Now since $\!^*\widehat{S}$ is a
flat $S$-module we have
$$
\Ext^i_S(S/\fn,\!^*\widehat{S})\cong\Ext^i_S(S/\fn,S)\otimes_S\!^*\widehat{S}.
$$
Keep in mind that $\Ext^i_S(S/\fn,S)=\,^*\Ext^i_S(S/\fn,S)$ is a
graded $S$-module. Hence using Lemma \ref{comp} we have
$\Ext^i_S(S/\fn,\!^*\widehat{S})=0$ if and only if
$\Ext^i_S(S/\fn,S)=0$. Therefore
$\depth(\fn,\!^*\widehat{S})=\depth(\fn,S)=d$.
\end{proof}

\begin{lem}\label{lem pullback}
Let $X$ be a complex of graded $R$-modules. For any homogeneous
surjective homomorphism $i:Y_n \longrightarrow X_n$ of graded
$R$-modules there is a commutative diagram
\begin{displaymath}
\xymatrix {Y=\cdots \ar[r]& X_{n+2}\ar[r]^{\gamma} \ar[d]^{=}&
Y_{n+1}
\ar[r]^{\beta}\ar[d]^{i'}& Y_{n}\ar[r]^{\alpha}\ar[d]^{i} & X_{n-1} \ar[r]\ar[d]^{=} &\cdots\\
X=\cdots\ar[r]& X_{n+2}\ar[r]^{\gamma'}& X_{n+1} \ar[r]^{\beta'} &
X_{n}\ar[r]^{\alpha'} & X_{n-1} \ar[r]&\cdots}
\end{displaymath}
such that $Y$ is a complex of graded $R$-modules, that $i'$ is
surjective and $\Ker i\cong \Ker i'$ and the induced map
$\H(Y)\longrightarrow \H(X)$ is an isomorphism.
\end{lem}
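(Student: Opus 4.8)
The plan is to construct $Y_{n+1}$ as a \emph{fibre product} (pullback) and then to realise the vertical maps as a degreewise surjective chain map whose kernel is acyclic. First I would set
$$Y_{n+1}:=Y_n\times_{X_n}X_{n+1}=\{(y,x)\in Y_n\oplus X_{n+1}\mid i(y)=\beta'(x)\},$$
which is a graded submodule of $Y_n\oplus X_{n+1}$ cut out by the homogeneous relation $i(y)=\beta'(x)$, hence again a graded $R$-module. I would then define the homogeneous maps $i'(y,x)=x$, $\beta(y,x)=y$, $\alpha=\alpha'\circ i$, and $\gamma(z)=(0,\gamma'(z))$; the element $(0,\gamma'(z))$ does lie in $Y_{n+1}$ precisely because $\beta'\gamma'=0$ in $X$, while in degrees outside $\{n,n+1\}$ the complex $Y$ and its differentials are taken to be those of $X$.

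Next I would check that the three squares commute and that $Y$ is a complex. Commutativity of the middle square, $i\beta=\beta' i'$, is exactly the defining relation of the pullback; the right square reduces to $\alpha=\alpha' i$ and the left square to $i'\gamma=\gamma'$, both immediate. For the complex condition I would compute $\alpha\beta(y,x)=\alpha'(i(y))=\alpha'\beta'(x)=0$ and $\beta\gamma(z)=0$ directly, the remaining compositions coinciding with those of $X$. The required properties of $i'$ are then routine: a pullback of the surjection $i$ is again surjective (given $x\in X_{n+1}$, pick $y$ with $i(y)=\beta'(x)$, so $i'(y,x)=x$), and $\Ker i'=\{(y,0)\mid i(y)=0\}\cong\Ker i$ via $(y,0)\mapsto y$.

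Finally, for the isomorphism $\H(Y)\cong\H(X)$, I would view the vertical maps as a chain map $f\colon Y\to X$ (the identity off degrees $n,n+1$, and $i,i'$ in those degrees), which is surjective in every degree. Its kernel complex $K$ is concentrated in degrees $n+1$ and $n$, with terms $\Ker i'$ and $\Ker i$; the differential $\beta$ restricts to $(y,0)\mapsto y$, which is exactly the isomorphism $\Ker i'\cong\Ker i$, and $\alpha$ kills $\Ker i$, so $K$ is acyclic. The short exact sequence of complexes $0\to K\to Y\xrightarrow{f} X\to0$ together with its homology long exact sequence then yields $\H(Y)\cong\H(X)$.

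I do not expect a genuine obstacle here; the construction is the standard pullback used in resolution-shifting. The only points that need care are verifying that every newly defined map is homogeneous (which is inherited from $i,\alpha',\beta',\gamma'$ and the grading on the fibre product) and that $K$ is genuinely a subcomplex, i.e.\ that $\beta$ carries $\Ker i'$ into $\Ker i$ and $\alpha$ annihilates $\Ker i$; both are the elementary checks indicated above.
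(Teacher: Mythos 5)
Your construction is exactly the one in the paper: the same pullback $Y_{n+1}=Y_n\times_{X_n}X_{n+1}$, the same maps $i'$, $\beta$, $\alpha=\alpha'i$, $\gamma$, and the same identification $\Ker i'\cong\Ker i$; you merely spell out the homology isomorphism via the acyclic kernel complex and the long exact sequence, a step the paper leaves implicit. The argument is correct and complete.
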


\begin{proof}
Let $\alpha=\alpha'i$ and $\beta: Y_{n+1}\longrightarrow Y_{n}$ be
the pullback of $\beta'$ along $i$ thus
$$ Y_{n+1}=\{(x,y)|x\in X_{n+1}, y\in Y_n \text{ and } \beta'(x)=i(y)\}.$$
It is clear that $ Y_{n+1}$ is a graded $R$-module. Let
$i':Y_{n+1}\to X_{n+1}$ be a map defined by $i'(x,y)=x$ which is
seen to be surjective. Define $\gamma:X_{n+2}\to Y_{n+1}$ by
$\gamma(x)=(\gamma'(x),0)$ for every $x\in X_{n+2}$. It is clear
that $Y$ is a complex and $\Ker i\cong \Ker i'$. Therefore the
induced map $\H(Y)\longrightarrow \H(X)$ is an isomorphism.
\end{proof}

\begin{cor}\label{triangle}
Let $Y\in$ $^{*}\mathcal{D}_{\square}(R)$ such that $^*\Gid_R Y=g
>0$. Then there is an exact triangle $G\to I\to Y\to\Sigma G$  such
that $G$ is a $^*$Gorenstein injective module and $I$ is a complex
of graded $R$-modules such that $^*\id_R I=g$.
\end{cor}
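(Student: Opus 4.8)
The plan is to realize $Y$ as the cone of a map out of the Gorenstein injective module $G$ into a bounded complex of $^*$injectives, and then to pin down the injective dimension exactly. First I would fix a $^*$injective resolution $E\simeq Y$ (which exists since $Y\in{}^*\mathcal{D}_{\sqsubset}(R)$ and $\sup Y=s<\infty$) and truncate it at the bottom, setting $W=(0\to E_s\to\cdots\to E_{-g+1}\to Z\to0)$ with $Z=\Ker(E_{-g}\to E_{-g-1})$ the $(-g)$-th cosyzygy. One checks $W\simeq Y$ because the complementary quotient $E/W$ is acyclic. The modules $E_j$ with $j\ge -g+1$ are $^*$injective, while $Z$ is $^*$Gorenstein injective: this is exactly what the last paragraph of the proof of Theorem \ref{thm Gid N} establishes (through Theorem \ref{thm resolving}), using that $^*\Gid_RY=g$ and $g\ge -\inf Y$. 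Since $Z$ is $^*$Gorenstein injective, its defining property supplies a $^*$injective module $J$ with a surjection $\pi\colon J\to Z$ whose kernel $G$ is again $^*$Gorenstein injective.

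Next I would feed $\pi\colon J\to Z=W_{-g}$ into Lemma \ref{lem pullback} at degree $n=-g$. This produces $\widehat W$, quasi-isomorphic to $W$ (hence to $Y$), which agrees with $W$ above degree $-g+1$, carries the $^*$injective module $J$ in degree $-g$, and has the pullback $P=W_{-g+1}\times_ZJ$ in degree $-g+1$, fitting into a short exact sequence $0\to G\to P\to W_{-g+1}\to0$. The crucial observation is that this sequence splits: as $W_{-g+1}=E_{-g+1}$ is $^*$injective and $G$ is $^*$Gorenstein injective, Proposition \ref{thm Gid} gives $^*\Ext^1_R(W_{-g+1},G)=0$, so $P\cong W_{-g+1}\oplus G$. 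Using this splitting, the copy of $W_{-g+1}$ inside $P$, together with $J$ and the modules $W_j$ for $j\ge -g+2$, forms a subcomplex $I\subseteq\widehat W$ consisting entirely of $^*$injective modules and concentrated in degrees $[-g,s]$, whose complementary quotient is the single module $G$ in degree $-g+1$. Hence $^*\id_RI\le g$ by Theorem \ref{iid}, and the short exact sequence of complexes $0\to I\to\widehat W\to\Sigma^{-g+1}G\to0$ yields, after rotation, the exact triangle $\Sigma^{-g}G\to I\to Y\to\Sigma^{-g+1}G$, which is the asserted triangle $G\to I\to Y\to\Sigma G$ with $G$ a $^*$Gorenstein injective module.

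The part I expect to require the most care is the sharp equality $^*\id_RI=g$ rather than merely $\le g$. To obtain the reverse inequality I would use the formula of Theorem \ref{thm Gid N} to select a $^*$injective module $K$ with $\inf\mathbf{R}^*\Hom_R(K,Y)=-g$, and then apply $\mathbf{R}^*\Hom_R(K,-)$ to the triangle. Because $K$ is $^*$injective and $G$ is $^*$Gorenstein injective, Proposition \ref{thm Gid} forces $^*\Ext^i_R(K,G)=0$ for all $i>0$, so $\mathbf{R}^*\Hom_R(K,\Sigma^{-g}G)$ is concentrated in degree $-g$ and $\mathbf{R}^*\Hom_R(K,\Sigma^{-g+1}G)$ in degree $-g+1$. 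Reading off the homology long exact sequence at degree $-g$, the term to the right vanishes, so $\H_{-g}(\mathbf{R}^*\Hom_R(K,I))$ surjects onto the nonzero module $\H_{-g}(\mathbf{R}^*\Hom_R(K,Y))$; hence $-\inf\mathbf{R}^*\Hom_R(K,I)\ge g$, and taking $U=K$ in the formula of Theorem \ref{iid} gives $^*\id_RI\ge g$. Combining the two inequalities yields $^*\id_RI=g$ and finishes the argument.
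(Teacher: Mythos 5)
Your overall strategy---truncate a ${}^*$injective resolution at level $-g$ to expose a ${}^*$Gorenstein injective cosyzygy $Z$, pull back once along a surjection $J\to Z$ with ${}^*$Gorenstein injective kernel $G$, and then split $G$ off as a quotient complex---differs from the paper's proof, which instead iterates the pullback of Lemma \ref{lem pullback} $g$ times so that the Gorenstein injective module migrates to the top degree, where it is automatically a quotient complex and no splitting is needed. Your closing argument that ${}^*\id_RI\geq g$ (apply $\mathbf{R}{}^*\Hom_R(K,-)$ to the triangle and use ${}^*\Ext^{i}_R(K,G)=0$ for $i>0$) is correct, and is a point the paper leaves implicit.

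However, there is a genuine gap where you assert that ``the copy of $W_{-g+1}$ inside $P$, together with $J$ and the modules $W_j$ for $j\ge-g+2$, forms a subcomplex.'' The splitting $P\cong W_{-g+1}\oplus G$ furnished by ${}^*\Ext^1_R(W_{-g+1},G)=0$ is only a splitting of graded modules. Any complement of $G$ in $P$ has the form $s(W_{-g+1})$ for a section $s(w)=(w,\sigma(w))$ with $\pi\sigma=\partial_{-g+1}$, and for it to be closed under the differential of $\widehat W$ one needs $\gamma(W_{-g+2})=\{(\partial_{-g+2}x,0)\}\subseteq s(W_{-g+1})$, i.e.\ $\sigma$ must vanish on $\Im\partial_{-g+2}$. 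When $\inf Y\ge -g+2$ one has $\Im\partial_{-g+2}=\Ker\partial_{-g+1}$ in the truncated resolution, so such a $\sigma$ would descend to a splitting of $\pi\colon J\to Z$ itself, forcing $Z$ to be ${}^*$injective---which fails whenever ${}^*\id_RY=\infty$ (for instance $Y$ a module with ${}^*\Gid_RY=2<{}^*\id_RY$). Hence in general no complement of $G$ in $P$ is a subcomplex, and the short exact sequence of complexes $0\to I\to\widehat W\to\Sigma^{-g+1}G\to0$ you need does not exist at this stage. The repair is exactly what the paper does: apply Lemma \ref{lem pullback} repeatedly, invoking Theorem \ref{thm resolving} at each stage to see that the new pullback module is again ${}^*$Gorenstein injective, until the Gorenstein injective module sits in the top degree; the brutal truncation there produces the triangle without any splitting.
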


\begin{proof}
Without loss of generality we can assume that $Y$ has the form
$$ 0\to I_0\to I_{-1} \to \cdots \to I_{-g+1}\to B_{-g}\to 0,$$
where the $I_j$s are $^*$injective and $B_{-g}$ is $^*$Gorenstein
injective. By definition $B_{-g}$ is homomorphic image of some
$\!^*$injective module $J_0$. Thus by Lemma \ref{lem pullback} we
have a commutative diagram
\begin{displaymath}
\xymatrix {0 \ar[r]& I_0\ar[r] \ar[d]& I_{-1}
\ar[r]\ar[d]& \cdots\ar[r] & I_{-g+2} \ar[r]\ar[d] &G_{-g+1}\ar[r]\ar[d]^{i'}&J_0\ar[r]\ar[d]^i&0\\
0 \ar[r]& I_0\ar[r] & I_{-1} \ar[r]& \cdots \ar[r] & I_{-g+2} \ar[r]
&I_{-g+1}\ar[r]\ar[d]&B_{-g}\ar[r]\ar[d] & 0,\\
&&&&&0&0}
\end{displaymath}
such that $\Ker i\cong\Ker i'$. Since $\Ker i$ is a $^*$Gorenstein
injective module, using Theorem \ref{thm resolving}, $G_{-g+1}$ is
also $^*$Gorenstein injective. By repeating this argument $g$ times
we get the desired exact triangle $G\to I\to Y\to\Sigma G$.
\end{proof}

The following equality extends Chouinard's formula \cite{Ch} and
\cite[Theorem 2.2]{CS} to the $^*$Gorenstein injective dimension.

\begin{thm}\label{thm Gid formula}\label{C}
Assume that $R$ admits a $\!^*$dualizing complex or $R$ is a
non-negatively graded ring. Let $Y\in$
$^{*}\mathcal{D}_{\square}(R)$ of finite $^*$Gorenstein injective
dimension. Then there is an equality
$$ ^*\Gid _{R}Y=\sup\{\depth R_{\fp}-\width_{R_{\fp}}Y_{\fp} |\fp\in ^*\Spec R \}.$$
\end{thm}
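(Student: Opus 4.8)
The plan is to prove the two inequalities at the level of a single well-chosen prime, after first normalizing the Gorenstein injective dimension to be strictly positive. The key observation is that the asserted equality is invariant under shift: one has $\,^*\Gid_R\Sigma^mY=\,^*\Gid_RY-m$, while $\width_{R_\fp}(\Sigma^mY)_\fp=\width_{R_\fp}Y_\fp+m$, so that $\depth R_\fp-\width_{R_\fp}(\Sigma^mY)_\fp=(\depth R_\fp-\width_{R_\fp}Y_\fp)-m$ for every $\fp\in\,^*\Spec R$. Thus both sides transform by the same constant $-m$, and it suffices to verify the formula for one representative in each shift-orbit. Since $g:=\,^*\Gid_RY$ is a finite integer, I may replace $Y$ by $\Sigma^{g-1}Y$ and assume from now on that $g=\,^*\Gid_RY>0$.

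With $g>0$, the plan is to invoke Corollary \ref{triangle} to obtain an exact triangle $G\to I\to Y\to\Sigma G$ in which $G$ is a $^*$Gorenstein injective module and $^*\id_RI=g$; note that $I\in\,^*\mathcal{D}_{\square}(R)$ because $G$ and $Y$ are, so Theorem \ref{thm id} applies and yields $g=\sup\{\depth R_\fp-\width_{R_\fp}I_\fp\mid\fp\in\,^*\Spec R\}$, with the supremum attained (as a maximum of integers bounded above by $g$) at some homogeneous prime $\fq$, where $\width_{R_\fq}I_\fq=\depth R_\fq-g$ is finite. For every homogeneous $\fp$ I would localize the triangle to $G_\fp\to I_\fp\to Y_\fp\to\Sigma G_\fp$ in $\mathcal{D}(R_\fp)$ and extract the standard $\inf$-inequalities for width, obtained by applying $R_\fp/\fp R_\fp\utp_{R_\fp}-$ to the triangle and reading the homology long exact sequence. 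Proposition \ref{lem depth&widh}, which is exactly where the hypothesis that $R$ admits a $^*$dualizing complex or is non-negatively graded is used, supplies $\depth R_\fp\leq\width_{R_\fp}G_\fp$ for all $\fp$.

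Combining these ingredients gives both inequalities. From the rotated triangle $I_\fp\to Y_\fp\to\Sigma G_\fp$ one has $\width_{R_\fp}Y_\fp\geq\min\{\width_{R_\fp}I_\fp,\,\width_{R_\fp}G_\fp+1\}$, so $\depth R_\fp-\width_{R_\fp}Y_\fp\leq\max\{g,-1\}=g$ for every $\fp$, which yields the inequality $\leq$. For the reverse inequality I localize at $\fq$: since $\width_{R_\fq}I_\fq=\depth R_\fq-g<\depth R_\fq\leq\width_{R_\fq}G_\fq$, the two triangle estimates $\width_{R_\fq}I_\fq\geq\min\{\width_{R_\fq}G_\fq,\,\width_{R_\fq}Y_\fq\}$ and $\width_{R_\fq}Y_\fq\geq\min\{\width_{R_\fq}I_\fq,\,\width_{R_\fq}G_\fq+1\}$ force $\width_{R_\fq}Y_\fq=\width_{R_\fq}I_\fq$, whence $\depth R_\fq-\width_{R_\fq}Y_\fq=g$ and the supremum equals $g$. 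I expect the main obstacle to be the careful bookkeeping of the triangle inequalities for width at the distinguished prime $\fq$, where the goal is to pin down $\width_{R_\fq}Y_\fq$ \emph{exactly} rather than merely bound it; the finiteness of $\width_{R_\fq}I_\fq$ together with the strict gap $\width_{R_\fq}I_\fq<\width_{R_\fq}G_\fq$ coming from $g>0$ are precisely what make the two estimates collapse to an equality.
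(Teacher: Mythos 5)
Your proof is correct and takes essentially the same route as the paper, whose own proof simply defers to \cite[Theorem 2.2]{CS} together with Proposition \ref{lem depth&widh} and Corollary \ref{triangle}: you carry out exactly that dévissage, using the triangle $G\to I\to Y\to\Sigma G$, the Chouinard formula of Theorem \ref{thm id} for the finite-$^*$injective-dimension piece $I$, and the depth--width inequality for the $^*$Gorenstein injective module $G$. The shift normalization to force $g>0$ (so that Corollary \ref{triangle} applies) and the extraction of the attaining prime $\fq$ from Theorem \ref{thm id} rather than from the equality clause of Proposition \ref{lem depth&widh} are only minor, and valid, reorganizations of the same argument.
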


\begin{proof}
The argument is similar to \cite[Theorem 2.2]{CS}, just use
Proposition \ref{lem depth&widh} and Corollary \ref{triangle}.
\end{proof}

In the following corollary we compare the $^*$Gorenstein injective
dimension with the usual Gorenstein injective dimension.

\begin{cor}\label{ingid}
Let $Y\in$ $^{*}\mathcal{D}_{\square}(R)$.Then under each of the
following conditions
\begin{itemize}
\item[(a)] $R$ admits a $^*$dualizing complex; or
\item[(b)] $R$ is a non-negatively graded ring, and $^*\Gid_{R}Y$ and
$\Gid_{R}Y$ are finite,
\end{itemize}
one has
$$
^*\Gid_{R}Y\leq\Gid_{R}Y\leq ^{*}\Gid_{R}Y+1.
$$
\end{cor}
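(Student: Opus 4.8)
The strategy is to reduce the comparison between $^*\Gid_RY$ and $\Gid_RY$ to the two Chouinard-type formulas already established. On the graded side we have Theorem \ref{C}, which under either hypothesis (a) or (b) gives
$$
^*\Gid_RY=\sup\{\depth R_{\fp}-\width_{R_{\fp}}Y_{\fp}\mid\fp\in{}^*\Spec R\}.
$$
On the ungraded side, the Chouinard formula of Yassemi \cite[Theorem 2.2]{CS} (the Gorenstein injective analogue of \cite[Theorem 2.10]{Y}) gives the same supremum taken over all of $\Spec R$. So the entire problem becomes a comparison of two suprema of the function $\fp\mapsto\depth R_{\fp}-\width_{R_{\fp}}Y_{\fp}$, one ranging over homogeneous primes and the other over all primes. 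This is exactly the same reduction used to prove Corollary \ref{ineq} for $^*\id$ versus $\id$, so I would model the proof on that corollary.

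First I would dispose of the finiteness bookkeeping: under hypothesis (a) Theorem \ref{B(R)} shows $^*\Gid_RY<\infty$ iff $Y\in{}^*B(R)$ iff $Y\in B(R)$ iff $\Gid_RY<\infty$ (using the remark that $^*B(R)$ and $B(R)$ agree on $^*\mathcal D_\square(R)$ together with \cite[Theorem 4.4]{CFH}), so both dimensions are simultaneously finite; under hypothesis (b) finiteness of both is assumed outright. Granting finiteness, both Chouinard formulas apply. For the first inequality $^*\Gid_RY\le\Gid_RY$, the homogeneous primes form a subset of all primes, so the supremum defining $^*\Gid_RY$ is taken over a smaller index set and is therefore at most the supremum defining $\Gid_RY$.

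For the upper bound $\Gid_RY\le{}^*\Gid_RY+1$ I would pick a (possibly non-homogeneous) prime $\fp\in\Spec R$ achieving $\Gid_RY=\depth R_{\fp}-\width_{R_{\fp}}Y_{\fp}$ and pass to its homogeneous part $\fp^*$. If $\fp$ is itself homogeneous the term already appears in the $^*$-supremum and there is nothing to do. If $\fp$ is non-homogeneous, I would apply Corollary \ref{cor depth}, which gives $\depth R_{\fp}=\depth R_{\fp^*}+1$, together with Proposition \ref{pro width}, which gives $\width_{R_{\fp}}Y_{\fp}=\width_{R_{\fp^*}}Y_{\fp^*}$. Substituting,
$$
\depth R_{\fp}-\width_{R_{\fp}}Y_{\fp}=\bigl(\depth R_{\fp^*}-\width_{R_{\fp^*}}Y_{\fp^*}\bigr)+1\le{}^*\Gid_RY+1,
$$
where the final inequality is Theorem \ref{C} applied to the homogeneous prime $\fp^*$. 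This is literally the computation in the proof of Corollary \ref{ineq}, now with $^*\id$/$\id$ replaced by $^*\Gid$/$\Gid$ and the Chouinard formula for injective dimension replaced by Theorem \ref{C}.

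The only genuine subtlety — and the step I expect to need the most care — is verifying that Proposition \ref{pro width} and Corollary \ref{cor depth} are legitimately applicable to $Y$ here, since those results are stated for $Y\in{}^*\mathcal D_\square(R)$; this is exactly our hypothesis, so no extra work is required, but one must confirm $Y_{\fp}$ and $Y_{\fp^*}$ have finite width at the relevant primes so the differences are not of the form $\infty-\infty$. This finiteness is guaranteed precisely because $\Gid_RY$ (equivalently $^*\Gid_RY$) is finite, which forces the Chouinard suprema to be finite and hence each individual term $\depth R_{\fp}-\width_{R_{\fp}}Y_{\fp}$ to be a genuine integer at the primes that matter. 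Everything else is a direct transcription of the $^*\id$ argument, so I would simply remark that the proof is the same as Corollary \ref{ineq} using Theorem \ref{C} in place of Theorem \ref{thm id}.
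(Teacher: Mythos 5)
Your proposal is correct and follows essentially the same route as the paper: the paper likewise settles finiteness via Theorem \ref{B(R)} under hypothesis (a) and then deduces both inequalities by comparing the supremum in Theorem \ref{thm Gid formula} over $^*\Spec R$ with the supremum in \cite[Theorem 2.2]{CS} over $\Spec R$, exactly as in Corollary \ref{ineq}. Your added details (the explicit use of Corollary \ref{cor depth} and Proposition \ref{pro width} for the $+1$ bound) are just the steps the paper leaves implicit.
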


\begin{proof}
If $R$ admits a $^*$dualizing complex, then Theorem \ref{B(R)},
implies that $^{*}\Gid_{R}Y<\infty$, if and only if
$\Gid_{R}Y<\infty$. In each both cases the inequalities follow from
Theorem \ref{thm Gid formula} and \cite[Theorem 2.2]{CS}.
\end{proof}


\end{document}